\documentclass[11pt]{article}

\usepackage[T1]{fontenc}
\usepackage{amsfonts}
\usepackage{amsmath}
\usepackage{amssymb}
\usepackage{amsthm}
\usepackage{bbm}
\usepackage{bm}
\usepackage{mathrsfs}
\usepackage{verbatim}
\usepackage{setspace}
\usepackage{color}
\usepackage{pdfsync}

\usepackage{graphicx}
\usepackage{stmaryrd}
\usepackage{float} %
\usepackage{tikz}
\usetikzlibrary{automata, positioning}
\usepackage{tcolorbox}

\theoremstyle{plain}
\newtheorem{theorem}{Theorem}[section]
\newtheorem{proposition}[theorem]{Proposition}
\newtheorem{lemma}[theorem]{Lemma}
\newtheorem{corollary}[theorem]{Corollary}

\theoremstyle{definition}

\newtheorem{remark}[theorem]{Remark}

\newtheorem{example}[theorem]{Example}

\theoremstyle{remark}
{%
\end{oldthebibliography}%
}
\newcommand{\eps}{\varepsilon}

\newcommand{\D}{\mathbb{D}}

\renewcommand{\H}{\mathbb{H}}

\newcommand{\N}{\mathbb{N}}

\newcommand{\Q}{\mathbb{Q}}
\newcommand{\R}{\mathbb{R}}

\newcommand{\cB}{\mathcal{B}}

\newcommand{\cD}{\mathcal{D}}

\newcommand{\cM}{\mathcal{M}}

\DeclareMathOperator{\Law}{Law}

\DeclareMathOperator{\Unif}{Unif}

\DeclareMathOperator{\id}{Id}

\DeclareMathOperator*{\argmax}{arg\, max}

\newcommand{\1}{\mathbf{1}}

\newcommand{\shadow}[2]{\theta^{#2}(#1)}

\numberwithin{equation}{section}

\usepackage{enumerate}
\usepackage{dsfont}
\newcommand{\lst}{\preceq_{\mathrm{st}}}
\newcommand{\var}{\mathrm{Var}}
\newcommand{\weakto}{\buildrel \mathrm{w} \over \to}
\newcommand{\notweakto}{\buildrel \mathrm{w} \over \nrightarrow}

\usepackage[pdfborder={0 0 0}, colorlinks=true, allcolors=black]{hyperref}
\hypersetup{
  urlcolor = black,
  pdfauthor = {Marcel Nutz, Ruodu Wang},
  pdfkeywords = {The Directional Optimal Transport},
  pdftitle = {The Directional Optimal Transport},
  pdfsubject = {The Directional Optimal Transport},
  pdfpagemode = UseNone
}

\begin{document}

\title{
\vspace{-3.2em}
 The Directional Optimal Transport\footnote{The authors are indebted to Mathias Beiglb\"ock, Filippo Santambrogio and Julian Schuessler for fruitful discussions that greatly helped this work.}
}
\date{\today}
\author{Marcel Nutz\thanks{Departments of Statistics and Mathematics, Columbia University, New York, USA, \texttt{mnutz@columbia.edu}. Research supported by an Alfred P.\ Sloan Fellowship and NSF Grant DMS-1812661.}
\and
  Ruodu Wang\thanks%
  {Department of Statistics and Actuarial Science,
  University of Waterloo,
  Waterloo,  Canada,
 \texttt{wang@uwaterloo.ca}.
 Research supported by NSERC Grants RGPIN-2018-03823 and RGPAS-2018-522590.
 }} 
\maketitle \vspace{-1em}
\begin{abstract}
We introduce a constrained optimal transport problem where origins $x$ can only be transported to destinations $y\geq x$. Our statistical motivation is to describe the sharp upper bound for the variance of the treatment effect $Y-X$ given marginals when the effect is monotone, or $Y\geq X$. We thus focus on supermodular costs (or submodular rewards) and introduce a coupling $P_{*}$ that is optimal for all such costs and yields the sharp bound. This coupling admits manifold characterizations---geometric, order-theoretic, as optimal transport, through the cdf, and via the transport kernel---that explain its structure and imply useful bounds. When the first marginal is atomless, $P_{*}$ is  concentrated on the graphs of two maps which can be described in terms of the marginals, the second map arising due to the binding constraint.
\end{abstract}

\vspace{0.9em}

{\small
\noindent \emph{Keywords}: Optimal Transport; Monotone Treatment Effect; Submodular Reward

\noindent \emph{AMS 2010 Subject Classification}:
49N05; %
62G10; %
93E20 %
}

\section{Introduction}
\label{se:introduction}

We study a constrained Monge--Kantorovich optimal transport problem between marginal distributions $\mu$ and $\nu$ on the real line where the couplings are required to be ``directional'' in the sense that an origin $x$ can only be transported to destinations $y$ with $y\geq x$. While one can think of several natural transport or matching problems with such a constraint, our initial motivation comes from the statistical analysis of treatment effects. There, one compares a (treated) experiment group of patients with an (untreated) control group. A fundamental problem is that any potential outcome that treated patients would have received without treatment is not observed, and vice versa. While the marginal distributions $\mu$ and $\nu$ of the performance evaluations $X$ and $Y$ of the two groups can be estimated from experiment data, the joint distribution cannot, as the two groups are non-overlapping by design---Neyman noted as early as~1923 (cf.\ \cite{AronowGreenLee.14}) that there are no unbiased or consistent estimators for %
the covariance. The improvement of the performance measure due to treatment, $Y-X$, is known as \emph{treatment effect.} To test the hypothesis of substantial treatment effect, 
it is important to understand bounds on 
 $\var(Y-X)$ or more generally the joint distribution $P$ of $(X,Y)$.   
Crude (yet popular) bounds can be obtained by mapping one group to the extremes of the support of the other. The classical Fr\'echet--Hoeffding (or Hardy--Littlewood) mechanism gives better bounds and is often used in the literature (see, e.g., \cite{AronowGreenLee.14,FanPark.10}, and \cite{RachevRuschendorf.98a,RachevRuschendorf.98b} for mathematical background). The lower bound for $\var(Y-X)$ over all couplings is attained by the comonotone (or Fr\'echet--Hoeffding) coupling. The upper bound over all couplings leads to the antitone coupling, which may be unrealistic in the context of many treatment effects: this coupling corresponds to the idea that the healthiest untreated subject would have become the least healthy patient if treated, and vice versa, which seems exceedingly pessimistic, e.g., in a study on the impact of physical activity on obesity. As proposed in~\cite{Manski.97}, this issue can be alleviated by the assumption of \emph{monotone treatment effect} when suitable, postulating that the treatment effect is nonnegative: $Y\geq X$ means that an untreated individual's performance would not have been worsened by the treatment, and vice versa. Of course, this assumption is only made after verifying that $\nu$ stochastically dominates $\mu$ in the data.
Under the assumption of monotone treatment effect, the sharp upper bound of $\var(Y-X)$ corresponds to a coupling $P_{*}$ that we call optimal directional coupling.\footnote{We prefer ``directional'' over ``monotone'' as the latter terminology often refers to the Fr\'echet--Hoeffding coupling in the transport literature.} More generally, $P_{*}$ yields the sharp upper bound for $E^{P}[g(X,Y)]$ whenever~$g$ is supermodular. The lower bound remains trivial in that it still corresponds to the comonotone coupling (which satisfies $Y\geq X$ in view of the necessary stochastic dominance), whence our focus on the upper bound.

\begin{figure}[tbh]
\begin{center}
\resizebox{\textwidth}{!}{

\tikzset{every picture/.style={line width=0.75pt}} %

\begin{tikzpicture}[x=0.75pt,y=0.75pt,yscale=-1,xscale=1]
\draw    (397.23,114.37) -- (634.5,113.61) ;

\draw    (397.23,219.68) -- (634.5,218.92) ;

\draw [fill={rgb, 255:red, 255; green, 255; blue, 255 }  ,fill opacity=1 ]   (522.69,113.61) -- (417.91,219.68) ;
\draw [shift={(417.91,219.68)}, rotate = 134.65] [color={rgb, 255:red, 0; green, 0; blue, 0 }  ][fill={rgb, 255:red, 0; green, 0; blue, 0 }  ][line width=0.75]      (0, 0) circle [x radius= 3.35, y radius= 3.35]   ;
\draw [shift={(522.69,113.61)}, rotate = 134.65] [color={rgb, 255:red, 0; green, 0; blue, 0 }  ][fill={rgb, 255:red, 0; green, 0; blue, 0 }  ][line width=0.75]      (0, 0) circle [x radius= 3.35, y radius= 3.35]   ;
\draw [fill={rgb, 255:red, 255; green, 255; blue, 255 }  ,fill opacity=1 ]   (615.36,113.61) -- (486.34,219.68) ;
\draw [shift={(486.34,219.68)}, rotate = 140.57] [color={rgb, 255:red, 0; green, 0; blue, 0 }  ][fill={rgb, 255:red, 0; green, 0; blue, 0 }  ][line width=0.75]      (0, 0) circle [x radius= 3.35, y radius= 3.35]   ;
\draw [shift={(615.36,113.61)}, rotate = 140.57] [color={rgb, 255:red, 0; green, 0; blue, 0 }  ][fill={rgb, 255:red, 0; green, 0; blue, 0 }  ][line width=0.75]      (0, 0) circle [x radius= 3.35, y radius= 3.35]   ;
\draw  [dash pattern={on 0.84pt off 2.51pt}]  (615.36,113.61) -- (417.91,219.68) ;

\draw  [dash pattern={on 0.84pt off 2.51pt}]  (486.34,219.68) -- (522.69,113.61) ;

\draw    (112.5,115.13) -- (373.08,114.37) ;

\draw    (112.5,220.45) -- (373.08,219.68) ;

\draw [fill={rgb, 255:red, 255; green, 255; blue, 255 }  ,fill opacity=1 ]   (323.13,114.37) -- (236,220.07) ;
\draw [shift={(236,220.07)}, rotate = 129.5] [color={rgb, 255:red, 0; green, 0; blue, 0 }  ][fill={rgb, 255:red, 0; green, 0; blue, 0 }  ][line width=0.75]      (0, 0) circle [x radius= 3.35, y radius= 3.35]   ;
\draw [shift={(323.13,114.37)}, rotate = 129.5] [color={rgb, 255:red, 0; green, 0; blue, 0 }  ][fill={rgb, 255:red, 0; green, 0; blue, 0 }  ][line width=0.75]      (0, 0) circle [x radius= 3.35, y radius= 3.35]   ;
\draw [fill={rgb, 255:red, 255; green, 255; blue, 255 }  ,fill opacity=1 ]   (295.12,114.18) -- (264.01,220.26) ;
\draw [shift={(264.01,220.26)}, rotate = 106.35] [color={rgb, 255:red, 0; green, 0; blue, 0 }  ][fill={rgb, 255:red, 0; green, 0; blue, 0 }  ][line width=0.75]      (0, 0) circle [x radius= 3.35, y radius= 3.35]   ;
\draw [shift={(295.12,114.18)}, rotate = 106.35] [color={rgb, 255:red, 0; green, 0; blue, 0 }  ][fill={rgb, 255:red, 0; green, 0; blue, 0 }  ][line width=0.75]      (0, 0) circle [x radius= 3.35, y radius= 3.35]   ;
\draw [fill={rgb, 255:red, 255; green, 255; blue, 255 }  ,fill opacity=1 ]   (347.28,114.37) -- (139.97,220.26) ;
\draw [shift={(139.97,220.26)}, rotate = 152.94] [color={rgb, 255:red, 0; green, 0; blue, 0 }  ][fill={rgb, 255:red, 0; green, 0; blue, 0 }  ][line width=0.75]      (0, 0) circle [x radius= 3.35, y radius= 3.35]   ;
\draw [shift={(347.28,114.37)}, rotate = 152.94] [color={rgb, 255:red, 0; green, 0; blue, 0 }  ][fill={rgb, 255:red, 0; green, 0; blue, 0 }  ][line width=0.75]      (0, 0) circle [x radius= 3.35, y radius= 3.35]   ;
\draw [fill={rgb, 255:red, 255; green, 255; blue, 255 }  ,fill opacity=1 ]   (230.01,114.37) -- (206.75,165.11) -- (181.55,220.07) ;
\draw [shift={(181.55,220.07)}, rotate = 114.63] [color={rgb, 255:red, 0; green, 0; blue, 0 }  ][fill={rgb, 255:red, 0; green, 0; blue, 0 }  ][line width=0.75]      (0, 0) circle [x radius= 3.35, y radius= 3.35]   ;
\draw [shift={(230.01,114.37)}, rotate = 114.63] [color={rgb, 255:red, 0; green, 0; blue, 0 }  ][fill={rgb, 255:red, 0; green, 0; blue, 0 }  ][line width=0.75]      (0, 0) circle [x radius= 3.35, y radius= 3.35]   ;
\draw [fill={rgb, 255:red, 255; green, 255; blue, 255 }  ,fill opacity=1 ]   (229.18,116.19) -- (206.75,165.11) -- (181.55,220.07) ;

\draw [shift={(230.01,114.37)}, rotate = 114.63] [color={rgb, 255:red, 0; green, 0; blue, 0 }  ][line width=0.75]    (10.93,-4.9) .. controls (6.95,-2.3) and (3.31,-0.67) .. (0,0) .. controls (3.31,0.67) and (6.95,2.3) .. (10.93,4.9)   ;
\draw [fill={rgb, 255:red, 255; green, 255; blue, 255 }  ,fill opacity=1 ]   (345.49,115.28) -- (139.97,220.26) ;

\draw [shift={(347.28,114.37)}, rotate = 152.94] [color={rgb, 255:red, 0; green, 0; blue, 0 }  ][line width=0.75]    (10.93,-4.9) .. controls (6.95,-2.3) and (3.31,-0.67) .. (0,0) .. controls (3.31,0.67) and (6.95,2.3) .. (10.93,4.9)   ;
\draw [fill={rgb, 255:red, 255; green, 255; blue, 255 }  ,fill opacity=1 ]   (321.86,115.91) -- (236,220.07) ;

\draw [shift={(323.13,114.37)}, rotate = 129.5] [color={rgb, 255:red, 0; green, 0; blue, 0 }  ][line width=0.75]    (10.93,-4.9) .. controls (6.95,-2.3) and (3.31,-0.67) .. (0,0) .. controls (3.31,0.67) and (6.95,2.3) .. (10.93,4.9)   ;
\draw [fill={rgb, 255:red, 255; green, 255; blue, 255 }  ,fill opacity=1 ]   (294.56,116.1) -- (264.01,220.26) ;

\draw [shift={(295.12,114.18)}, rotate = 106.35] [color={rgb, 255:red, 0; green, 0; blue, 0 }  ][line width=0.75]    (10.93,-4.9) .. controls (6.95,-2.3) and (3.31,-0.67) .. (0,0) .. controls (3.31,0.67) and (6.95,2.3) .. (10.93,4.9)   ;
\draw [fill={rgb, 255:red, 255; green, 255; blue, 255 }  ,fill opacity=1 ]   (521.29,115.03) -- (417.91,219.68) ;

\draw [shift={(522.69,113.61)}, rotate = 134.65] [color={rgb, 255:red, 0; green, 0; blue, 0 }  ][line width=0.75]    (10.93,-4.9) .. controls (6.95,-2.3) and (3.31,-0.67) .. (0,0) .. controls (3.31,0.67) and (6.95,2.3) .. (10.93,4.9)   ;
\draw [fill={rgb, 255:red, 255; green, 255; blue, 255 }  ,fill opacity=1 ]   (613.82,114.88) -- (486.34,219.68) ;

\draw [shift={(615.36,113.61)}, rotate = 140.57] [color={rgb, 255:red, 0; green, 0; blue, 0 }  ][line width=0.75]    (10.93,-4.9) .. controls (6.95,-2.3) and (3.31,-0.67) .. (0,0) .. controls (3.31,0.67) and (6.95,2.3) .. (10.93,4.9)   ;

\draw (416.84,229.61) node    {$x$};
\draw (485.27,230.37) node    {$x'$};
\draw (525.19,103.16) node    {$y$};
\draw (617.86,100.63) node    {$y'$};
\draw (265.27,231.84) node    {$x_{1}$};
\draw (236.96,231.84) node    {$x_{2}$};
\draw (138.72,231.84) node    {$x_{4}$};
\draw (180.35,231.84) node    {$x_{3}$};
\draw (350.19,103.16) node    {$y_{1}$};
\draw (324.19,103.16) node    {$y_{2}$};
\draw (296.19,103.16) node    {$y_{3}$};
\draw (231.19,103.16) node    {$y_{4}$};

\end{tikzpicture}

} 
\end{center}
\vspace{-1em}\caption{Left panel: An example of $P_{*}$, with the $y$-axis shown at the top. Right panel: An improvable pair which can be ``improved'' to the dotted pair.} 
\label{fi:improvable}
\end{figure}
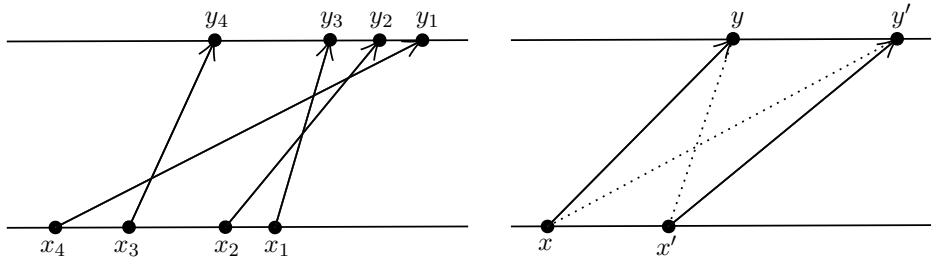

In the next section we introduce $P_{*}$ for general marginals $\mu,\nu$ in stochastic order and provide manifold characterizations that resemble familiar properties of the antitone coupling while also taking into account the constraint. Globally, the geometry is significantly richer than in the classical antitone case. At a local level, the interaction between supermodularity and constraint is much more transparent, and each of our characterizations clarifies that interaction from a different angle.

The construction of $P_{*}$ is best explained in the simple case $\mu=\frac{1}{n} \sum_{i=1}^{n} \delta_{x_{i}}$ and $\nu=\frac{1}{n} \sum_{i=1}^{n} \delta_{y_{i}}$ where both marginals consist of a common number of atoms of equal size at distinct locations, and moreover $x_{1}>\cdots>x_{n}$ are numbered from \emph{right to left}. The transport $P_{*}$ processes these atoms $x_{i}$ in that order, sending each origin to the minimal (left-most) destination $y=T(x_{i})$ that is allowed by the constraint $y\geq T(x_{i})$ and has not been filled yet (Figure~\ref{fi:improvable}). That is, starting with the set $S_{1}=\{y_{1},\dots,y_{n}\}$ of all destinations, we iterate for $k=1,\dots,n$:
\begin{enumerate} \item
$T(x_{k}):= \min \{y\in S_{k}:\, y\geq x_{k}\}$,
\item $S_{k+1}:= S_{k}\setminus \{T(x_k)\}$.
\end{enumerate} 
A less formal description is to imagine a left parenthesis ``$($'' at each location~$x_i$ and a right parenthesis ``$)$'' at each $y_i$. Then $T$ agrees with to the usual rule of matching a left with its corresponding right parenthesis in a mathematical statement.
The antitone coupling would be obtained omitting the inequality in~(i) above, making apparent how the constraint creates the difference with the classical coupling at the local level.

Further properties provided in the next section include a geometric characterization through the support of the coupling and of course the optimality as transport for all supermodular costs (or submodular rewards, including variance of treatment effect); here the notion of cyclical monotonicity plays a key role. In particular, we provide sharp conditions under which $P_{*}$ admits a Monge map. Finally, one can also describe $P_{*}$ through its joint cdf.

The constraint is responsible for qualitative differences with the antitone coupling. Assuming that the first marginal is atomless, the latter coupling always admits a Monge map, %
in other words, it is concentrated on a graph. By contrast, the constrained coupling is concentrated on two graphs. The two maps can be described in detail: one is the identity function and appears when the constraint is locally binding, the other admits a graphical interpretation and a semi-explicit formula based on the difference of the marginal cdf's. The appearance of the identity is clearly reminiscent of the unconstrained transport problem for costs like $c(x,y)=|y-x|^{p},$ $0<p<1$ that combine concavity away from the origin with convexity at the origin, and was first observed in \cite{GangboMcCann.96} in that context. See also \cite[Section~3.3.2]{Santambrogio.15} for a discussion.
Another difference is the behavior under marginal transformations. The antitone coupling is invariant with respect to arbitrary monotone transformations of the coordinate axes; more precisely, the copula corresponding to the coupling is the same for all marginals. This is no longer true for the constrained version, the reason being that the underlying constraint $Y\geq X$ is not invariant. Instead, the copula depends on the marginals and an invariance property holds only when a common transformation is applied to both axes.

Several constrained optimal transport problems have been of lively interest in recent years. One related problem is the optimal transport with quadratic cost $c(x,y)=|y-x|^{2}$ in~$\R^{d}$ studied in \cite{JimenezSantambrogio.12} (see also \cite{CarlierDePascaleSantambrogio.10,ChenJiangYang.13}) under a convex constraint: transports have to satisfy $y-x\in C$ for a given convex set~$C$. It is shown that this problem admits an optimal transport map (Monge map) in great generality. The specification $y-x\in C$ accommodates our constraint, but minimizing the quadratic cost (rather than maximizing) yields the comonotone coupling in our setting. Indeed, \cite{JimenezSantambrogio.12} details that the comonotone coupling is the optimal solution for general~$C$ in the scalar case---the constraint is not binding as soon as an admissible coupling exists. In our problem, the constraint is typically binding and the optimal coupling typically does not admit a Monge map but instead requires a randomization between two maps. (See also Section~\ref{se:moreConstraints} for a generalization of $P_{*}$ to cone constraints that may simplify the comparison with~\cite{JimenezSantambrogio.12}.)

A different constrained problem is the martingale optimal transport introduced in \cite{BeiglbockHenryLaborderePenkner.11, GalichonHenryLabordereTouzi.11,TanTouzi.11}, corresponding to the constraint $E[Y|X]=X$ as motivated from financial mathematics (see \cite{AcciaioBeiglbockPenknerSchachermayer.12,BeiglbockJuillet.12,BiaginiBouchardKardarasNutz.14,BouchardNutz.13,DolinskySoner.12,Hobson.11}, among many others). In particular, the Left- and Right-Curtain couplings of~\cite{BeiglbockJuillet.12} correspond to the constrained versions of the comonotone/antitone couplings. It is worth noting that these couplings are also concentrated on the graphs of two maps in typical cases, like $P_{*}$. (However, the appearance of a randomization is more obvious: only a constant martingale is deterministic.) The supermartingale constraint $E[Y|X]\leq X$ in \cite{NutzStebegg.16} resembles the current situation in being an inequality constraint. Compared to all of these examples, the present case yields by far the most explicit and detailed results. In hindsight, the directional transport is arguably the most canonical and simplest nontrivial example of a constrained optimal transport problem. For general transport problems in Polish spaces, cyclical monotonicity and duality theory with constraints (or equivalently cost functions with infinite values) were studied by \cite{AmbrosioPratelli.03, BeiglbockGoldsternMareschSchachermayer.09, EkrenSoner.18, Kellerer.84,SchachermayerTeichmann.09}, among others.

The literature on copulas features several directly related results; these works seem to be mostly unaware of one another and of the results in the optimal transport literature. The earliest related contribution that we are aware of, \cite{Smith.83}, features a bound on the cdf of any directional coupling (see also Remark~\ref{rk:earlierResults} below). It is not investigated if or when that bound corresponds to a coupling. Almost two decades later, \cite{Rogers.99} was interested in coupling random walks ``fast'' and determined a directional coupling which maximizes a cost of the form $\varphi(y-x)$ with $\varphi$ strictly convex, nonnegative and decreasing. It is clear from Theorem~\ref{th:main} below that this coupling is~$P_{*}$; the decrease of $\varphi$ is irrelevant as convexity alone implies submodularity. In \cite{Rogers.99}, the application to random walks is successful only when the difference of the marginal distributions is unimodular, and in that case, $P_{*}$ has a trivial structure as the sum of an identity and an antitone coupling between disjoint intervals (see Example~\ref{ex:single-crossing} below)---that may explain why \cite{Rogers.99} did not investigate the coupling further. The recent work~\cite{ArnoldMolchanovZiegel.20} characterizes all directional dependence structures of marginals in stochastic order and derives several related bounds, in particular one on the cdf which gives exactly the cdf of~$P_{*}$. (In fact, the same cdf was previously stated in \cite{Rogers.99}, in a slightly more implicit form.) The structure of the coupling, and more generally the point of view of optimal transport, are not highlighted in these works.

While we hope that this paper is a fairly complete study of the scalar case with inequality constraint (or, more generally, one-dimensional cone constraint; cf.\ Section~\ref{se:moreConstraints}), we mention that the multidimensional case is wide open. To stick with the above motivation, consider a treatment which affects two (or more) separately measured qualities---e.g., the impact of physical exercise on blood pressure and body mass index. Control and experiment group now give rise to distributions in $\R^{2}$, and the assumption of monotone treatment effect for both performance measures corresponds to a \emph{cone} constraint $y-x \in [0,\infty)^2$. It is worth noting that even if a scalar quantity is used to aggregate the two performances, the cone constraint is typically more stringent than what would be obtained by constraining the aggregated performances.

The remainder of the paper is organized as follows. Section~\ref{se:mainResults} formalizes the problem and presents the main results. The subsequent Sections~\ref{se:characterizations}--\ref{se:transportMap} provide the proofs and some required tools, as well as examples and additional consequences. Section~\ref{se:furtherProperties} gathers three discussions that we omitted in the main results: another decomposition of $P_{*}$, optimality properties in unconstrained transport problems, and an extension to general (random) cone constraints. 
\section{Main Results}\label{se:mainResults}

Let $\mu$ and $\nu$ be probability measures on $\R$ and denote by $X(x,y)=x$, $Y(x,y)=y$ the coordinate projections on $\R^{2}$. A coupling, or transport, of $\mu$ and $\nu$ is a probability $P$ on $\R^{2}$ with marginals $P\circ X^{-1}=\mu$ and $P\circ Y^{-1}=\nu$. We call a coupling $P$ \emph{directional} if it is concentrated on the closed halfplane above the diagonal,
$$
  \H=\{Y\geq X\}=\{(x,y)\in\R^{2}:\, y\geq x\},
$$
meaning that $\mu$-almost every origin $x$ is transported to a destination located to the right of~$x$ (or to $x$ itself). Denoting by $\cD=\cD(\mu,\nu)$ the set of all directional couplings, we have $\cD\neq\emptyset$ if and only if $\mu$ and $\nu$ are in stochastic order, denoted $\mu \lst \nu$, meaning that their cdf's satisfy $F_{\mu} \ge F_{\nu}$. 
Indeed, $\mu \lst \nu$ if and only if the comonotone coupling is directional. More generally, we indicate by $\theta_1\lst \theta_2$ two subprobabilities with common mass $\theta_{1}(\R)=\theta_{2}(\R)$ and $F_{\theta_1} \ge F_{\theta_2}$. The other notions also have obvious generalizations.

The following theorem corresponds to a general version of the discrete construction of $P_{*}$ in the Introduction. We write
$\theta  \le \nu$ for a subprobability~$\theta$ with $\theta(A)  \le \nu(A)$ for all $A\in\cB(\R)$.

\begin{theorem}\label{th:existenceAndMinimality}
  Let $\mu\lst\nu$. There exists a unique directional coupling $P_{*}=P_{*}(\mu,\nu)$ which couples $\mu|_{(x,\infty)}$ to $\nu_x$ for all $x\in\R$,
  where the subprobability $\nu_x$ is defined by its
   cdf 
   $$F_{\nu_x} = \sup_{\theta \in S_{x}} F_{\theta} \quad \mbox{for}\quad S_{x}=\{\theta: \mu|_{(x,\infty)} \lst \theta  \le \nu \}.%
   $$  
  The measure $\nu_x$ is the unique minimal element of $S_{x}$ for the order $\lst$.
\end{theorem}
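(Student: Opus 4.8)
The plan is to first establish that $S_x$ has a unique minimal element $\nu_x$ for the stochastic order, then use this to build the coupling. For the minimal element, the key observation is that $S_x$ is a lattice under $\lst$: if $\theta_1, \theta_2 \in S_x$, then the subprobability with cdf $\max(F_{\theta_1}, F_{\theta_2})$ still has total mass $\mu((x,\infty))$ (both cdf's agree at $+\infty$), still dominates $\mu|_{(x,\infty)}$ in $\lst$ (its cdf is above both, hence above $F_{\mu|_{(x,\infty)}}$), and still satisfies $\theta \le \nu$ --- this last point is the one requiring care, since a maximum of cdf's corresponds to a minimum of measures only in a one-dimensional sense; I would argue it directly by checking $\theta(A) \le \nu(A)$ for intervals $A=(a,b]$ using $\theta((a,b]) = F_\theta(b) - F_\theta(a) \le \min_i(F_{\theta_i}(b)) - \max_i(F_{\theta_i}(a))$ is not quite right, so instead one shows $\theta \le \nu$ by noting $\nu - \theta$ has nonnegative cdf increments wherever... actually the clean route is: $\theta \le \nu$ is equivalent to $F_{\nu-\theta}$ being nondecreasing with $F_{\nu - \theta} \ge 0$, and since $F_{\nu} - \max_i F_{\theta_i} = \min_i (F_\nu - F_{\theta_i}) \ge 0$ with each $F_\nu - F_{\theta_i}$ nondecreasing, a pointwise minimum of nondecreasing nonnegative functions... is \emph{not} nondecreasing in general. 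So the lattice argument must instead use that $F_{\nu_x} = \sup_{\theta \in S_x} F_\theta$ and show the supremum itself lies in $S_x$; I expect the correct argument is that $S_x$ is closed under the operation $\theta \mapsto$ (measure with cdf $\max(F_{\theta_1},F_{\theta_2})$) fails, and one should instead directly verify the \emph{supremum over all of} $S_x$ is admissible, using right-continuity and a monotone limit. This is the main obstacle.

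More carefully: I would show $F_{\nu_x} := \sup_{\theta\in S_x} F_\theta$ defines, after taking the right-continuous version, a valid cdf of a subprobability $\nu_x$ with the right mass, that $\mu|_{(x,\infty)} \lst \nu_x$ (clear, as $F_{\nu_x} \ge F_\theta \ge F_{\mu|_{(x,\infty)}}$), and crucially that $\nu_x \le \nu$. For the latter, fix $A=(a,b]$; I want $\nu_x((a,b]) \le \nu((a,b])$, i.e. $F_{\nu_x}(b) - F_{\nu_x}(a) \le F_\nu(b) - F_\nu(a)$. Pick $\theta \in S_x$ with $F_\theta(b)$ close to $F_{\nu_x}(b)$; then $F_{\nu_x}(b) - F_{\nu_x}(a) \le F_\theta(b) - F_{\nu_x}(a) + \varepsilon \le F_\theta(b) - F_\theta(a) + \varepsilon \le F_\nu(b) - F_\nu(a) + \varepsilon$, where the second inequality uses $F_{\nu_x}(a) \ge F_\theta(a)$. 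Letting $\varepsilon \to 0$ and extending from half-open intervals to all Borel sets gives $\nu_x \le \nu$. Hence $\nu_x \in S_x$, and since $F_{\nu_x} \ge F_\theta$ for every $\theta \in S_x$ means $\nu_x \lst \theta$, it is the (unique, by antisymmetry of $\lst$ on subprobabilities of equal mass) minimal element. This also fixes the gap above: one does not need $S_x$ to be a lattice in the naive sense, only that $\sup_{\theta} F_\theta \in S_x$, which the interval argument above secures.

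For existence and uniqueness of $P_*$, I would proceed by a gluing/disintegration argument. The family $(\nu_x)_{x\in\R}$ is decreasing in $x$ (since $S_x \supseteq S_{x'}$ is false, but $\mu|_{(x,\infty)} \lst \mu|_{(x',\infty)}$ for $x<x'$... one checks $\nu_{x'} \le \nu_x$ directly from minimality: $\nu_x - \mu|_{(x,x']}$, appropriately, is a candidate in $S_{x'}$), and $\nu_x \downarrow 0$ as $x \to \infty$, $\nu_x \to \nu$ as $x \to -\infty$. The requirement that $P_*$ couple $\mu|_{(x,\infty)}$ to $\nu_x$ for \emph{all} $x$ pins down, for $\mu$-a.e.\ $x$, the conditional law: informally $P_*(\,\cdot \mid X > x) $ must have second marginal $\nu_x / \mu((x,\infty))$, and differentiating in $x$ determines the kernel $\kappa(x,\cdot)$ as the ``derivative'' $-\partial_x \nu_x$ normalized by $\mu(dx)$. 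I would make this rigorous by defining $P_*$ on generating rectangles $(x,\infty)\times B$ via $P_*((x,\infty)\times B) = \nu_x(B)$, verifying consistency (monotonicity in $x$ of $x \mapsto \nu_x(B)$, which follows from $\nu_{x'} \le \nu_x$) and that this extends to a measure by Carathéodory, then reading off that its marginals are $\mu$ and $\nu$ (taking $x\to-\infty$) and that it is directional (because $\nu_x$ is supported where the constraint allows, an argument needing $\mu|_{(x,\infty)} \lst \nu_x$ together with minimality to force no mass of $\nu_x$ strictly left of $x$... this requires an additional small argument: if $\nu_x$ had mass on $(-\infty, x]$ one could shift it right and stay in $S_x$ while lowering the cdf, contradicting minimality). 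Uniqueness is immediate since the values $P((x,\infty)\times B) = \nu_x(B)$ are forced and such rectangles generate $\cB(\R^2)$. The main obstacle remains the verification that the $\sup$ of cdf's produces a genuine minimal element of $S_x$ --- everything else is bookkeeping with the stochastic order.
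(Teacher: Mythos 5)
Your treatment of the minimal element $\nu_x$ is essentially the paper's Lemma~\ref{le:shadow}: one shows the pointwise supremum $F_{\nu_x}=\sup_{\theta\in S_x}F_\theta$ is itself the cdf of an element of $S_x$, the only point needing care being $\nu_x\le\nu$. Your interval argument for that step is correct (and it also yields right-continuity of the sup for free, since $0\le F_{\nu_x}(b)-F_{\nu_x}(a)\le F_\nu(b)-F_\nu(a)$). Incidentally, the worry that derailed your first attempt is unfounded: a pointwise infimum of nondecreasing functions \emph{is} nondecreasing, and this is exactly how the paper argues, via $F_{\nu-\nu_x}=F_\nu-\sup_{\theta\in S_x}F_\theta=\inf_{\theta\in S_x}F_{\nu-\theta}$. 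So up to this point you and the paper coincide.

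The construction of $P_*$ is where there is a genuine gap. Defining $P_*$ through its values $P_*((x,\infty)\times B)=\nu_x(B)$ (equivalently, through the candidate cdf $F(x,y)=(\nu-\nu_x)((-\infty,y])$) requires precisely the \emph{setwise} monotonicity $\nu_{x'}\le\nu_x$ for $x<x'$ — this is the $2$-increasingness of the cdf — together with right-continuity in $x$, which you do not address at all. You dispose of the monotonicity with ``one checks $\nu_{x'}\le\nu_x$ directly from minimality: $\nu_x-\mu|_{(x,x']}$, appropriately, is a candidate in $S_{x'}$,'' and this does not work: $\nu_x-\mu|_{(x,x']}$ need not be a (nonnegative) measure at all, and, more fundamentally, the minimality of $\nu_{x'}$ is minimality for the \emph{stochastic} order $\lst$, so exhibiting any candidate $\theta\in S_{x'}$ with $\theta\le\nu_x$ only yields $\nu_{x'}\lst\theta$, i.e.\ a comparison of cdf's, never the setwise inequality $\nu_{x'}\le\nu_x$ that consistency of the rectangle values requires. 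This is exactly the content the paper isolates in Lemma~\ref{le:shadowAdditive} (additivity/divisibility of the shadow map $\mu_0\mapsto\shadow{\mu_0}{\nu}$), which gives $\nu_x-\nu_{x'}=\shadow{\mu|_{(x,x']}}{\nu-\nu_{x'}}\ge0$ and, since this difference has total mass $\mu((x,x'])$, also the right-continuity in $x$; its proof uses minimality in a more refined way (in particular that $\nu_{x'}$ is $\lst$-minimal among sub-measures of $\nu_x$ of the same mass, whence $\mu|_{(x,x']}\lst\nu_x-\nu_{x'}$) and is the real work of the theorem beyond Lemma~\ref{le:shadow}. Without an argument of this type — or, alternatively, an explicit formula for $F_{\nu_x}$ from which $2$-increasingness can be verified by hand — your construction of $P_*$ is not justified. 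The remaining items (marginals by letting $y\to\infty$, directionality, uniqueness via the generating $\pi$-system of sets $(x,\infty)\times B$) are fine; note also that directionality needs no minimality: $\mu|_{(x,\infty)}\lst\nu_x$ already forces $\nu_x((-\infty,x])=0$.
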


The coupling $P_{*}$ differs from the antitone coupling except in the trivial case where all couplings are directional; that is, when $\mu((-\infty,x])=\nu([x,\infty))=1$ for some $x\in\R$. Indeed, this is the only case where the antitone coupling is directional.

We make $\mu\lst\nu$ a \emph{standing assumption} in all that follows. The above theorem is one of several equivalent characterizations of $P_{*}$ that we detail next. The most important for our analysis is geometric, describing the  support of $P_{*}$ based on the idea that we would like any two trajectories of the transport to cross whenever that is allowed by the constraint. We say that the pair $((x,y), (x',y'))\in \H^2$   is \emph{improvable} if $x<x'\leq y<y'$.  
This means that $(x,y)$ and $(x',y')$ do not cross, but they could be rearranged (``improved'') into the configuration 
$((x,y'), (x',y))$ which forms a cross and remains $\H^{2}$ (Figure~\ref{fi:improvable}). 
A set $\Gamma\subseteq \H$ satisfies the \emph{constrained crossing property} if it contains no improvable pairs. Stated differently, any two trajectories in $\Gamma$ either cross, or they cannot be rearranged into a cross without exiting $\H$.

This property is closely related to a characterization of $P_{*}$ through optimal transport with specific reward functions. A Borel function $g:\H\to \R$ is \emph{submodular} (on $\H$) if
\begin{equation}\label{eq:defsub}
g(x,y)+g(x',y') \le g(x,y') + g(x',y) \quad\mbox{for all}\quad x<x'\leq y<y'
\end{equation}
and \emph{strictly submodular} if the inequality in \eqref{eq:defsub} is strict; two examples are $g(x,y)=(x-y)^{2}$ and $g(x,y)=-\sqrt{|x-y|}$. If $g$ is differentiable, the Spence--Mirrlees condition $-g_{xy}>0$ is a sufficient condition. 
We say that $g$ is \emph{$(\mu,\nu)$-integrable} if 
$|g(x,y)|\le \phi(x) + \psi(y)$ for some $\phi \in L^1(\mu)$ and $\psi \in L^1(\nu)$. This implies uniform bounds on $\int g \,dP$ for any coupling $P$ and in particular that the optimal transport problem
\begin{equation}
\sup_{P\in \cD} \int g   \,dP ~~~\left(\mbox{or equivalently,}~ \inf_{P\in \cD} \int -g   \,dP \right)
\label{eq:transportProblem}
\end{equation}
is finite as soon as $\cD\neq \emptyset$.
Finally, $P\in\cD$ is \emph{optimal} for $g$ if it attains the supremum. To see the connection with the  constrained crossing property, observe that for any strictly submodular $g$,
$$
  g(x,y)+g(x',y') < g(x,y')+g(x',y)  \mbox{~~~if~~~$((x,y), (x',y'))$ is improvable.}
$$ 

The following result also contains a third (straightforward) characterization in terms of the so-called concordance order in~(i).

\begin{theorem}\label{th:main}
  For a coupling $P\in \cD(\mu,\nu)$, the following are equivalent.
  \begin{enumerate}
  \item $F_P  \le F_{Q} $ on $\R^2$ for all $Q\in   \cD(\mu,\nu)$,
  where $F_{Q}$ is the cdf of $Q$.
  \item $P$ is optimal for all $(\mu,\nu)$-integrable and submodular $g$.
  \item $P$ is optimal for some $(\mu,\nu)$-integrable and strictly submodular $g$.
  \item $P$ is supported by a set $\Gamma\subseteq \H$ with the constrained crossing property.
  \item $P=P_{*}$. 
  \end{enumerate}   
\end{theorem}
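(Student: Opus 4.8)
The plan is to close the cycle of implications (v)$\,\Rightarrow\,$(i)$\,\Rightarrow\,$(ii)$\,\Rightarrow\,$(iii)$\,\Rightarrow\,$(iv)$\,\Rightarrow\,$(v), proving (v)$\Rightarrow$(i) first so that it may be reused in the last step. For (v)$\Rightarrow$(i), fix $Q\in\cD(\mu,\nu)$ and $(x,y)$. Since $Q$ is directional, the law $\theta^{x}_{Q}$ of $Y$ on $\{X>x\}$ is dominated by $\nu$ as a measure and satisfies $\mu|_{(x,\infty)}\lst\theta^{x}_{Q}$, i.e.\ $\theta^{x}_{Q}\in S_{x}$, so $F_{\theta^{x}_{Q}}\le F_{\nu_{x}}$. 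As $F_{Q}(x,y)=F_{\nu}(y)-Q(X>x,\,Y\le y)=F_{\nu}(y)-F_{\theta^{x}_{Q}}(y)$ while $F_{P_{*}}(x,y)=F_{\nu}(y)-F_{\nu_{x}}(y)$ by construction, we get $F_{P_{*}}\le F_{Q}$ on $\R^{2}$. The same estimate, applied at a level $s\le y$ together with $Q(X>x,\,Y\le s)\le\mu((x,s])$, gives the sharper bound $F_{Q}(x,y)\ge\sup_{s\le y}\{F_{\nu}(s)-\mu((x,s])\}$ for every directional $Q$; a short computation of $F_{\nu_{x}}$ from the definition of $S_{x}$ in Theorem~\ref{th:existenceAndMinimality} shows this supremum is attained by $P_{*}$, and this reformulation will be convenient below.

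For (i)$\Rightarrow$(ii) I would invoke the constrained analogue of the classical fact (Tchen; Cambanis--Simons--Stout; cf.\ \cite{RachevRuschendorf.98a,RachevRuschendorf.98b}) that submodular integrals are monotone in the lower-orthant order of cdf's: for bounded submodular $g$ one has the Hoeffding-type identity $\int g\,dP-\int g\,dQ=\int(F_{P}-F_{Q})\,d\lambda_{g}$ with $\lambda_{g}\le0$ the mixed second-difference measure of $g$, so (i) makes the right-hand side nonnegative; a truncation passes from bounded to $(\mu,\nu)$-integrable $g$, the additive part $\phi(x)+\psi(y)$ being coupling-independent. (The reverse implication, not needed for the cycle, is immediate from the bounded submodular functions $-\1_{(-\infty,s]\times(-\infty,t]}$.) For (ii)$\Rightarrow$(iii) it suffices to exhibit one bounded strictly submodular function, e.g.\ $g(x,y)=e^{x-y}$, which on $\H$ satisfies $g(x,y)+g(x',y')<g(x,y')+g(x',y)$ whenever $x<x'\le y<y'$.

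For (iii)$\Rightarrow$(iv), suppose $P$ is optimal for some strictly submodular $(\mu,\nu)$-integrable $g$ but is concentrated on no set with the constrained crossing property. A standard competitor argument then yields Borel sets $A,B$ with $P(A),P(B)>0$ all of whose cross-pairs are improvable; replacing $P|_{A}$ and $P|_{B}$ by (appropriately chosen) couplings of the $X$-marginal of one with the $Y$-marginal of the other keeps the global marginals, stays in $\H$ (because every cross-pair is improvable, which also takes care of the ostensibly delicate case $x'=y$), and strictly increases $\int g\,dP$ -- contradiction. Hence some full-measure $\Gamma\subseteq\H$ has the crossing property.

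The step (iv)$\Rightarrow$(v) is the crux. Let $P$ be carried by $\Gamma\subseteq\H$ with the crossing property; since cdf's determine measures and $F_{P_{*}}\le F_{P}$ by (v)$\Rightarrow$(i), it suffices to prove $F_{P}\le F_{P_{*}}$, i.e.\ (by the reformulation above) that for every $x<t$ there is $s\in[x,t]$ with $F_{P}(x,t)\le F_{\nu}(s)-\mu((x,s])$. Unwinding the definitions, such an $s$ must \emph{separate}: no $P$-mass travels from $\{X\le x\}$ into $(s,t]$, and no $P$-mass travels from $(x,s]$ into $(s,\infty)$. The natural candidate is $s^{*}$, the highest destination $\le t$ reached from origins in $\{X\le x\}$: the first property then holds by the definition of the supremum, and the second is where the crossing property does its work -- its local rigidity (if $(x_{0},y_{0})\in\Gamma$ then no origin in $(x_{0},y_{0}]$ can be sent past $y_{0}$, else an improvable pair appears) shows that any escape of mass from $(x,s^{*}]$ above $s^{*}$ would, together with the $\{X\le x\}$-mass accumulating near $s^{*}$, form an improvable pair. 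The main obstacle is precisely to make this rigorous -- coping with suprema, possible atoms at the separating level, and the passage between the topological support and positive-mass statements -- thereby turning the purely two-point crossing condition into the global identification $P=P_{*}$. (Alternatively one may invoke the geometric description of $\supp P_{*}$ established earlier in the section and verify that it is the unique directional support with the crossing property.) The remaining implications are routine or classical.
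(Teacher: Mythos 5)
Your implications (v)$\Rightarrow$(i), (i)$\Rightarrow$(ii), (ii)$\Rightarrow$(iii) are fine and essentially the paper's route (the paper simply cites the supermodular/concordance-order fact of M\"uller--Stoyan for (i)$\Rightarrow$(ii)). Your (iii)$\Rightarrow$(iv) is only a sketch: the ``standard competitor argument'' producing Borel sets $A,B$ of positive mass and a measurable rearrangement is precisely the nontrivial content that the paper outsources to $c$-cyclical monotonicity for the infinite-valued cost $c=-g\1_{\H}+\infty\1_{\H^{c}}$ (via \cite{BeiglbockGoldsternMareschSchachermayer.09}); as written it asserts rather than proves the existence of the improving modification.

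The genuine gap is (iv)$\Rightarrow$(v), which you yourself acknowledge is not carried out, and the reduction you propose is in fact flawed as stated. You claim it suffices to find, for each $x<t$, an exact separating level $s\in[x,t]$ with $F_{P}(x,t)\le F_{\nu}(s)-\mu((x,s])$, i.e.\ a level such that no mass travels from $\{X\le x\}$ into $(s,t]$ and none from $(x,s]$ above $s$. Such an $s$ need not exist even for $P=P_{*}$: take $\mu=\tfrac12(\delta_{-1}+\delta_{1})$ and $\nu=\tfrac12\Unif(0.9,1)+\tfrac12\delta_{2}$, so that $P_{*}$ sends $-1$ to $\Unif(0.9,1)$ and $1$ to $2$ (one checks there is no improvable pair). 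With $x=0$, $t=1.5$ one has $F_{P_{*}}(0,1.5)=\tfrac12$, while $F_{\nu}(s)-\mu((0,s])<\tfrac12$ for every $s\in[0,1.5]$, the supremum being approached only as $s\uparrow 1$ and never attained. Hence your candidate $s^{*}$ (``highest destination reached from $\{X\le x\}$'') cannot close the argument; at best one must pass to an $\eps$-approximate separation, and controlling the interaction with the crossing property there (atoms of $\mu$ sitting exactly at the separating level, mass from $\{X\le x\}$ landing above $t$) is exactly the difficulty you leave open. The paper's proof of (iv)$\Rightarrow$(v) proceeds differently and avoids the cdf formula altogether: assuming $P\neq P_{*}$, Theorem~\ref{th:existenceAndMinimality} gives an $x$ where $P$ sends $\mu|_{(x,\infty)}$ to some $\nu_{x}'\neq\nu_{x}$ with $\nu_{x}\lst\nu_{x}'$, and the dedicated Lemma~\ref{le:new1} produces $z>y\ge x$ with $\nu_{x}((x,y])>\nu_{x}'((x,z))$ and $\nu_{x}([y,z))>\nu_{x}'([y,z))$; from this $P((x,y]\times[z,\infty))>0$, so the crossing property forces $P((-\infty,x]\times[y,z))=0$, which contradicts the second inequality. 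Without an argument of this kind (or a correct approximate-separation substitute), your cycle of implications is not closed.
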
 

The geometric characterization in Theorem~\ref{th:main}\,(iv) implies that the optimal coupling $P_*$ is invariant with respect to common transformations of both coordinate axes as follows.

\begin{corollary}\label{co:invariance}
Let $\phi:\R\to\R$ be a strictly increasing function.
Then
$$
  P_*(\mu,\nu) = P_*(\mu\circ \phi^{-1},\nu\circ \phi^{-1}) \circ ( \phi,\phi).
$$
\end{corollary}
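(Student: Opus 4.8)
The plan is to read the assertion through the geometric characterization of Theorem~\ref{th:main}, namely the equivalence (iv)$\Leftrightarrow$(v): a directional coupling of two given marginals equals the $P_{*}$ of those marginals exactly when it is carried by a set with the constrained crossing property. Write $\tilde\mu=\mu\circ\phi^{-1}$, $\tilde\nu=\nu\circ\phi^{-1}$, and let $\Phi:\R^{2}\to\R^{2}$, $\Phi(x,y)=(\phi(x),\phi(y))$. It suffices to prove the clean identity $\Phi_{\#}P_{*}(\mu,\nu)=P_{*}(\tilde\mu,\tilde\nu)$ for the image measure; since $\Phi$ is injective and $P_{*}(\tilde\mu,\tilde\nu)$ is concentrated on $\Phi(\R^{2})=\phi(\R)^{2}$ (because $\tilde\mu,\tilde\nu$ live on $\phi(\R)$), applying $\Phi$ to the formula in the corollary is reversible, so that formula is equivalent to this identity.

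First I would verify that $P:=\Phi_{\#}P_{*}(\mu,\nu)$ belongs to $\cD(\tilde\mu,\tilde\nu)$. The marginal conditions are immediate from the definition of pushforward: $P\circ X^{-1}=\phi_{\#}\mu=\tilde\mu$ and $P\circ Y^{-1}=\tilde\nu$. Directionality holds because $\phi$ is (in particular, weakly) increasing, so $y\geq x$ implies $\phi(y)\geq\phi(x)$, i.e.\ $\Phi(\H)\subseteq\H$; since $P_{*}(\mu,\nu)$ is concentrated on $\H$, so is $P$.

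The only real content is transporting the constrained crossing property through $\Phi$, and this is short. By Theorem~\ref{th:main}(v)$\Rightarrow$(iv), $P_{*}(\mu,\nu)$ is carried by a Borel set $\Gamma\subseteq\H$ with no improvable pair; put $\tilde\Gamma:=\Phi(\Gamma)$, so that $\Phi^{-1}(\tilde\Gamma)=\Gamma$ by injectivity and hence $P(\tilde\Gamma)=P_{*}(\mu,\nu)(\Gamma)=1$. Suppose $((u,v),(u',v'))\in\tilde\Gamma^{2}$ were improvable, i.e.\ $u<u'\leq v<v'$, and write $u=\phi(x)$, $v=\phi(y)$, $u'=\phi(x')$, $v'=\phi(y')$ with $(x,y),(x',y')\in\Gamma$. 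Because $\phi$ is strictly increasing, $\phi(x)<\phi(x')$ forces $x<x'$, $\phi(y)<\phi(y')$ forces $y<y'$, and $\phi(x')\leq\phi(y)$ forces $x'\leq y$ (otherwise $x'>y$ would give $\phi(x')>\phi(y)$). Thus $((x,y),(x',y'))$ is an improvable pair contained in $\Gamma$, contradicting the constrained crossing property of $\Gamma$. Hence $\tilde\Gamma$ has the constrained crossing property, and Theorem~\ref{th:main}(iv)$\Rightarrow$(v) gives $P=P_{*}(\tilde\mu,\tilde\nu)$, completing the argument.

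I do not expect a genuine obstacle: the order relations $x<x'\leq y<y'$ defining improvability, together with the marginal constraints, are precisely the structure preserved — and, for the strict inequalities, also reflected — by applying a strictly increasing map coordinatewise. The one point to dispatch in passing is measurability, which is routine: a monotone function is Borel, so $\Phi$ is an injective Borel map, $\Phi_{\#}P_{*}(\mu,\nu)$ is a bona fide Borel probability, and $\tilde\Gamma=\Phi(\Gamma)$ is Borel by the Lusin--Souslin theorem (and this is entirely elementary when $\phi$ is continuous).
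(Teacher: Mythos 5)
Your argument is correct and is exactly the route the paper intends: the paper states the corollary as a consequence of the geometric characterization in Theorem~\ref{th:main}\,(iv), i.e.\ that the constrained crossing property of a supporting set is preserved under a common strictly increasing transformation of both axes, which is precisely what you verify (including the marginal/directionality check and the measurability side remarks). No gaps; this matches the paper's (sketched) proof.
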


In particular, copulas of 
 $P_*(\mu,\nu)$ are precisely those of $P_*(\mu\circ \phi^{-1},\nu\circ \phi^{-1})$, and thus these copulas are invariant under common, strictly increasing transformations of the axes.
 The strict increase of $\phi$ is necessary to retain the constrained crossing property. Similarly, it is clear that the same transformation must be applied to both axes---in contrast to the unconstrained transport problem, as highlighted in the Introduction. %
 
Theorem~\ref{th:main}\,(i) yields an implicit description of the optimal cdf which, by a result of~\cite{ArnoldMolchanovZiegel.20}, implies the following representation. A proof by direct computation will be sketched in Section~\ref{se:cdf}, as well as resulting bounds.

\begin{corollary}\label{co:cdf}
The cdf of $P_{*}$ is given by
\begin{equation}\label{eq:cdf}
 F_{*}(x,y) = \begin{cases}
F_{\nu}(y) &  \mbox{if}~~ y\le x, \\
F_{\mu}(x) - \inf_{z\in [x,y]}  (F_{\mu}(z) -F_{\nu}(z) ) &  \mbox{if}~~ y> x.
\end{cases}
\end{equation}
\end{corollary}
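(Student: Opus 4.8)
The plan is to verify the formula \eqref{eq:cdf} by direct computation starting from the defining property of $P_*$ in Theorem~\ref{th:existenceAndMinimality}, namely that $P_*$ couples $\mu|_{(x,\infty)}$ to $\nu_x$, whose cdf is $F_{\nu_x} = \sup_{\theta\in S_x} F_\theta$ with $S_x = \{\theta : \mu|_{(x,\infty)} \lst \theta \le \nu\}$. The first step is to obtain an explicit formula for $F_{\nu_x}$. Since $\nu_x$ is the minimal element of $S_x$ for $\lst$, its cdf should be as large as possible subject to two constraints: $\theta \le \nu$ forces $F_\theta(z) \ge F_\nu(z) - (\text{mass of }\theta^c)$, or more precisely $\nu - \theta \ge 0$ gives $F_\nu(z) - F_\theta(z) \le F_\nu(\infty) - F_\theta(\infty) = 1 - \mu((x,\infty)) = F_\mu(x)$ while also $F_\nu(z) - F_\theta(z) \ge 0$; and $\mu|_{(x,\infty)} \lst \theta$ forces $F_\theta(z) \le F_{\mu|_{(x,\infty)}}(z) = (F_\mu(z) - F_\mu(x))^+$. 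Combining, the maximal admissible cdf is $F_{\nu_x}(z) = \min\{F_\nu(z),\ (F_\mu(z)-F_\mu(x))^+ \}$ wherever this is consistent; one checks that this function is genuinely a (sub)cdf of a measure in $S_x$ by verifying monotonicity and that the resulting $\nu - \nu_x$ and $\nu_x - \mu|_{(x,\infty)}$ are nonnegative measures. Actually a cleaner route: note $F_{\nu_x}(z) = F_\mu(x) \wedge F_\nu(z)$ is wrong in general; the right expression, which I would derive carefully, is $F_{\nu_x}(z) = F_\nu(z) - \inf_{w \le z}(F_\mu(x) \wedge F_\nu(w) - (F_\mu(w)-F_\mu(x))^+ \wedge 0)$-type formula, so I should instead directly invoke the representation from \cite{ArnoldMolchanovZiegel.20} that Theorem~\ref{th:main}\,(i) already hands us.

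The cleaner and shorter approach, which I would actually present, uses Theorem~\ref{th:main}\,(i): $F_*$ is the pointwise minimal cdf among all $Q \in \cD(\mu,\nu)$. So I would (a) show the right-hand side of \eqref{eq:cdf} is a lower bound for $F_Q(x,y)$ for every directional $Q$, and (b) exhibit directional couplings attaining it, or alternatively verify the right-hand side is itself a valid cdf of a directional coupling and hence (by minimality) equals $F_*$. For the lower bound when $y > x$: any directional $Q$ satisfies $Q(X \le z, Y \ge z) = Q(X \le z) - Q(X\le z, Y < z) \ge F_\mu(z) - F_\nu(z)$ since $Q(X \le z, Y<z) \le Q(Y < z) = F_\nu(z^-)$; more carefully, directionality gives $\{Y < z\} \subseteq \{X < z\} \cup \{X \le z < Y\}^c$... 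I would argue: for any $z \in [x,y]$, $F_Q(x,y) \ge Q(X \le x, Y < z) = F_\mu(x) - Q(X \le x, Y \ge z) \ge F_\mu(x) - Q(Y \ge z)$, but that gives $F_\mu(x) - (1-F_\nu(z^-))$ which is too weak. The correct chain uses directionality more sharply: $Q(X \le x,\, Y \ge z) \le Q(x < Y,\, Y \ge z)$? No — rather $\{X \le x\} \cap \{Y \ge z\}$ with $z \ge x$: on this event $X \le x \le z \le Y$. Using $Q(X \le z, Y \ge z) \ge F_\mu(z) - F_\nu(z)$ (since $Q(X\le z, Y < z) \le F_\nu(z)$ — wait, need $Y<z \Rightarrow$ nothing extra) and directionality $\{X > z\} \subseteq \{Y > z\}$ hence $\{Y \le z\} \subseteq \{X \le z\}$, so $Q(X \le z, Y < z) = Q(Y < z) = F_\nu(z^-)$, giving $Q(X \le z, Y \ge z) = F_\mu(z) - F_\nu(z^-)$ \emph{exactly}. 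Then $F_Q(x,y) = Q(X \le x, Y \le y) \ge Q(X \le x) - Q(X \le x, Y > y) \ge F_\mu(x) - Q(z < X, z \le x \Rightarrow ...)$ — I will instead bound $Q(X \le x, Y > y) \le Q(X \le z, Y > z) = F_\mu(z) - Q(X \le z, Y \le z)$; combining with $Q(X \le x, Y \le y) \ge Q(X \le x, Y \le z) = F_\mu(x) - Q(X \le x, z < Y)$ and iterating over $z$ yields, after taking the infimum over $z \in [x,y]$, exactly $F_\mu(x) - \inf_{z}(F_\mu(z) - F_\nu(z))$. The algebra here is the part to get exactly right.

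For the reverse inequality, i.e. that $F_*$ is no larger than the claimed expression, I would note that Theorem~\ref{th:main}\,(i) says $F_*$ is minimal, so it suffices to show the right-hand side of \eqref{eq:cdf} is the cdf of \emph{some} directional coupling; then minimality forces $F_* \le (\text{RHS})$, and together with the lower bound just established, equality holds. Checking that the RHS is a genuine bivariate cdf amounts to: rectangle nonnegativity (the mixed difference $\ge 0$ on all rectangles), right-continuity, correct limits at $\pm\infty$, and the correct marginals $F_\mu, F_\nu$ — these are elementary if somewhat tedious verifications using the concavity-like structure of $z \mapsto \inf_{[x,z]}(F_\mu - F_\nu)$ in the region $y > x$ and the matching at the seam $y = x$ (where both pieces give $F_\nu(x)$ since $\inf_{z \in \{x\}}(F_\mu(z) - F_\nu(z)) = F_\mu(x) - F_\nu(x)$). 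That the marginals come out right: letting $y \to \infty$ gives $F_\mu(x) - \inf_{z \ge x}(F_\mu(z) - F_\nu(z)) = F_\mu(x) - 0 = F_\mu(x)$ using $\mu \lst \nu$ and $F_\mu(z) - F_\nu(z) \to 0$; letting $x \to \infty$ in the $y > x$ branch is vacuous and the $y \le x$ branch gives $F_\nu(y)$.

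The main obstacle I anticipate is getting the two-sided chain of inequalities in the $y > x$ case exactly tight — the bookkeeping with strict versus non-strict inequalities and left limits ($F_\nu(z^-)$ versus $F_\nu(z)$) where atoms of $\nu$ sit, and correctly passing to the infimum over the (closed) interval $[x,y]$ so that the bound is attained rather than merely approached. A convenient way to sidestep left-limit subtleties is to first prove the identity for $y$ a continuity point of $F_\nu$ (a dense set) and then extend by right-continuity of both sides in $y$; similarly in $x$. I would flag that the verification that the RHS is a valid cdf, while routine, is where most of the length goes, and that alternatively one can cite \cite{ArnoldMolchanovZiegel.20} to shortcut it, which is presumably why the statement attributes the representation to that work; the present proof sketch is included mainly to keep the paper self-contained.
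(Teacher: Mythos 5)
Your overall strategy---a uniform lower bound over all directional couplings plus sharpness via Theorem~\ref{th:main}\,(i)---is the same as the paper's, and the lower-bound chain you eventually settle on is exactly the tight one: for $z\in[x,y]$, directionality gives $Q(X\le z,Y\le z)=Q(Y\le z)=F_{\nu}(z)$, hence $Q(X\le x,Y>y)\le Q(X\le z,Y>z)=F_{\mu}(z)-F_{\nu}(z)$, and taking the infimum over $z\in[x,y]$ yields $F_{Q}(x,y)=F_{\mu}(x)-Q(X\le x,Y>y)\ge F_{\mu}(x)-\inf_{z\in[x,y]}(F_{\mu}(z)-F_{\nu}(z))$; no left limits enter, so the $F_{\nu}(z^-)$ worries are moot (the write-up would need the false starts removed). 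Where you genuinely diverge is the attainment step. The paper fixes $(x,y)$, sets $a=F_{\mu}(x)$, $b=F_{\nu}(y)$, $c=\inf_{z\in[x,y]}(F_{\mu}(z)-F_{\nu}(z))$, and exhibits one explicit directional coupling $P=\Law(F^{-1}_{\mu}(U),F^{-1}_{\nu}(V))$ with $V$ a piecewise shift of $U\sim\Unif[0,1]$, satisfying $P(X\le x,Y>y)\ge c$; since only equality at the fixed point is needed, no global verification is required. You instead propose to show the right-hand side of \eqref{eq:cdf} is itself the cdf of a directional coupling and invoke minimality. That route works---above the diagonal the rectangle inequality reduces to $\inf_{[x_1,y_2]}F+\inf_{[x_2,y_1]}F\ge\inf_{[x_1,y_1]}F+\inf_{[x_2,y_2]}F$, which holds because the first interval is the union and the second the intersection of the latter two---but the seam cases at $y=x$, right-continuity and the marginal identities are precisely the content of \cite{ArnoldMolchanovZiegel.20}, and as written you assert rather than prove them; your proof is complete only if you cite that result (legitimate, as the paper notes) or write the case analysis out. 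Minor point: the formula $F_{\nu_x}(z)=\min\{F_{\nu}(z),(F_{\mu}(z)-F_{\mu}(x))^{+}\}$ floated in your first paragraph is indeed false in general, since $\theta\le\nu$ is stronger than $F_{\theta}\le F_{\nu}$; abandoning that route was the right call.
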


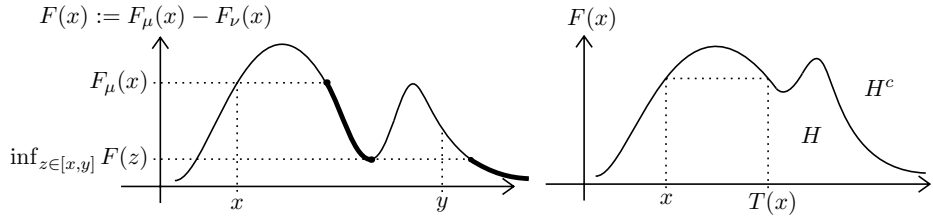
\begin{figure}[b]
\begin{center}
\vspace{-3em}
\resizebox{\textwidth}{!}{

\tikzset{every picture/.style={line width=0.75pt}} %

\begin{tikzpicture}[x=0.75pt,y=0.75pt,yscale=-1,xscale=1]
\draw  (84.38,189.43) -- (328.2,189.43)(108.76,96.84) -- (108.76,199.72) (321.2,184.43) -- (328.2,189.43) -- (321.2,194.43) (103.76,103.84) -- (108.76,96.84) -- (113.76,103.84)  ;
\draw    (117.8,185.77) .. controls (141.63,186.35) and (171.85,16.63) .. (225.91,156.13) ;

\draw    (253.81,148.57) .. controls (275.31,81.73) and (264.27,182.28) .. (337.5,184.6) ;

\draw  [dash pattern={on 0.84pt off 2.51pt}]  (156.45,124.59) -- (156.45,194.42) ;

\draw  [dash pattern={on 0.84pt off 2.51pt}]  (104.58,125.03) -- (211.96,125.03) ;

\draw    (225.91,156.13) .. controls (241.02,193.32) and (249.74,157.87) .. (253.81,148.57) ;

\draw  [dash pattern={on 0.84pt off 2.51pt}]  (104.58,172.46) -- (300.88,172.46) ;

\draw  [dash pattern={on 0.84pt off 2.51pt}]  (283.74,156) -- (283.74,194) ;

\draw [line width=2.25]    (211.96,125.03) .. controls (222.11,142.43) and (228.92,174.14) .. (240.32,172.69) ;

\draw  [color={rgb, 255:red, 0; green, 0; blue, 0 }  ][line width=3] [line join = round][line cap = round] (211.87,124.86) .. controls (211.87,124.86) and (211.87,124.86) .. (211.87,124.86) ;
\draw  [color={rgb, 255:red, 0; green, 0; blue, 0 }  ][line width=3] [line join = round][line cap = round] (239.54,172.75) .. controls (239.54,172.75) and (239.54,172.75) .. (239.54,172.75) ;
\draw [line width=2.25]    (300.88,172.46) .. controls (314.84,181.43) and (325.51,184.04) .. (337.5,184.6) ;

\draw  [color={rgb, 255:red, 0; green, 0; blue, 0 }  ][line width=2.25] [line join = round][line cap = round] (301.18,172.61) .. controls (301.18,172.61) and (301.18,172.61) .. (301.18,172.61) ;
\draw  (348,187.97) -- (586.5,187.97)(371.85,96) -- (371.85,198.19) (579.5,182.97) -- (586.5,187.97) -- (579.5,192.97) (366.85,103) -- (371.85,96) -- (376.85,103)  ;
\draw    (379.01,183.02) .. controls (401.13,183.65) and (429.72,46.47) .. (491.21,128.65) ;

\draw    (507.39,119.17) .. controls (530.04,79.98) and (516.02,178.59) .. (583.98,181.12) ;

\draw  [dash pattern={on 0.84pt off 2.51pt}]  (422.71,122.33) -- (422.71,191.24) ;

\draw  [dash pattern={on 0.84pt off 2.51pt}]  (486.08,122.33) -- (486.08,191.24) ;

\draw  [dash pattern={on 0.84pt off 2.51pt}]  (422.71,122.33) -- (485.81,122.33) ;

\draw    (491.21,128.65) .. controls (496.06,133.71) and (501.46,129.28) .. (507.39,119.17) ;

\draw (156.45,200.45) node    {$x$};
\draw (83.06,124.89) node    {$F_{\mu }( x)$};
\draw (283.74,200.45) node    {$y$};
\draw (58.53,172.94) node    {$\inf_{z\in [ x,y]} F( z)$};
\draw (105.99,84.28) node    {$F( x) :=F_{\mu }( x) -F_{\nu }( x)$};
\draw (422.98,197.4) node    {$x$};
\draw (489.08,199.66) node    {$T( x)$};
\draw (513.05,157.73) node    {$H$};
\draw (555.13,128.02) node    {$H^{c}$};
\draw (376.99,85.28) node    {$F( x)$};
\end{tikzpicture}

}
\end{center}
\vspace{-1.5em}\caption{Left panel: On the formula for $F_{*}$. Right panel: Definition of $T$.}\vspace{-1em}
\label{fi:cdfandT}
\end{figure}

See also Figure~\ref{fi:cdfandT} for a graphical representation.
As a first consequence, we observe the continuity of $P_{*}$ with respect to weak convergence ($\weakto$) of the marginals. 

\begin{corollary}\label{co:continuityWrtMarginals}
  Consider marginals $\mu_n\lst \nu_n$, $n\geq1$ with $\mu_n\weakto\mu$ and $\nu_n\weakto \nu$, and suppose that $\mu$ and $\nu$ are atomless. Then $P_*(\mu_n,\nu_n)\weakto P_*(\mu,\nu)$.
\end{corollary}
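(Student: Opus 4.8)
The plan is to work entirely at the level of cumulative distribution functions, exploiting the explicit formula for $F_*$ in Corollary~\ref{co:cdf}. Since $F_{\mu_n}\ge F_{\nu_n}$ for every $n$ (which is exactly what $\mu_n\lst\nu_n$ means) and $F_{\mu_n}\to F_\mu$, $F_{\nu_n}\to F_\nu$ at continuity points, the limits satisfy $F_\mu\ge F_\nu$, so $\mu\lst\nu$ and $P_*(\mu,\nu)$ is well defined; each $P_*(\mu_n,\nu_n)$ is well defined by hypothesis. Because $\mu$ and $\nu$ are atomless, $F_\mu$ and $F_\nu$ are continuous on $\R$, and hence by P\'olya's theorem the convergences $F_{\mu_n}\to F_\mu$ and $F_{\nu_n}\to F_\nu$ are in fact uniform on $\R$; in particular $F_n:=F_{\mu_n}-F_{\nu_n}\to F:=F_\mu-F_\nu$ uniformly.

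The next step is to push this through the formula~\eqref{eq:cdf}. Writing $F_{*,n}$ for the cdf of $P_*(\mu_n,\nu_n)$, Corollary~\ref{co:cdf} gives $F_{*,n}(x,y)=F_{\nu_n}(y)$ when $y\le x$ and $F_{*,n}(x,y)=F_{\mu_n}(x)-\inf_{z\in[x,y]}F_n(z)$ when $y>x$, and analogously for $F_*$. The one term needing care is the infimum: for every $x\le y$ one has $|\inf_{z\in[x,y]}F_n(z)-\inf_{z\in[x,y]}F(z)|\le\|F_n-F\|_\infty$, so $(x,y)\mapsto\inf_{z\in[x,y]}F_n(z)$ converges to $(x,y)\mapsto\inf_{z\in[x,y]}F(z)$ uniformly on $\{y\ge x\}$. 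Together with the uniform convergence of $F_{\mu_n}$ and $F_{\nu_n}$, and since the two branches of~\eqref{eq:cdf} agree along $y=x$, this shows $F_{*,n}\to F_*$ uniformly on $\R^2$.

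Finally I would deduce the weak convergence. As $P_*(\mu,\nu)$ has the atomless marginals $\mu$ and $\nu$, it charges no horizontal or vertical line, so $F_*$ is continuous on all of $\R^2$; thus $F_{*,n}(x,y)\to F_*(x,y)$ at every continuity point of $F_*$, which is precisely the classical criterion for $P_*(\mu_n,\nu_n)\weakto P_*(\mu,\nu)$. (Equivalently, since $\{\mu_n\}$ and $\{\nu_n\}$ are tight, so is the family of couplings $\{P_*(\mu_n,\nu_n)\}$, and every subsequential weak limit has cdf $F_*$ and hence equals $P_*(\mu,\nu)$, so the whole sequence converges.) There is no genuine analytic obstacle here: the atomlessness hypothesis is used only to upgrade pointwise convergence of the marginal cdf's to uniform convergence and to make $F_*$ continuous, and the main point to get right is the behaviour of the $\inf$ term in~\eqref{eq:cdf} under uniform convergence.
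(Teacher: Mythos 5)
Your proof is correct and follows the same route the paper intends: the paper derives this corollary directly from the explicit cdf formula of Corollary~\ref{co:cdf}, using atomlessness of the limit marginals exactly as you do. Your write-up simply fills in the details (P\'olya's theorem for uniform convergence, the elementary bound on the $\inf$ term, and joint continuity of $F_*$ via $|F_*(x',y')-F_*(x,y)|\le|F_\mu(x')-F_\mu(x)|+|F_\nu(y')-F_\nu(y)|$), all of which are sound.
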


We will see in Example~\ref{ex:non-convergence} that the continuity can fail in the presence of atoms.

The subsequent results describe the finer structure of the optimal transport. The \emph{common part} $\mu \wedge\nu$ of $\mu$ and $\nu$ is the measure defined by
$$
\frac{d(\mu \wedge\nu)}{d(\mu+\nu)} := \frac{d\mu}{d(\mu+\nu)} \wedge \frac{du}{d(\mu+\nu)}.
$$
Alternately, $\mu \wedge\nu$ is the maximal measure $\theta$ satisfying $\theta\leq\mu$ and $\theta\leq\nu$, and we can note that $\mu,\nu$ are mutually singular if and only if $\mu \wedge\nu=0$. Importantly, $P_{*}$ always transports $\mu \wedge\nu$ according to the identity coupling, similarly as in~\cite[Main Theorem~6.4]{GangboMcCann.96} for unconstrained transport with cost $l(|y-x|)$ and~$l$ strictly concave (see Figure~\ref{fi:identity} for two simple examples).

\begin{proposition}\label{pr:decompose}
  The optimal coupling $P_{*}(\mu,\nu)$ satisfies
    $$P_{*}(\mu,\nu)= \id(\mu\wedge\nu) +P_{*}(\mu',\nu')$$ 
  where $\id(\mu\wedge\nu)=(\mu\wedge\nu)\otimes_{x} \delta_{x}$ is the identical coupling of $\mu\wedge\nu$ with itself whereas $\mu'=\mu-\mu\wedge\nu$ and $\nu'=\nu-\mu\wedge\nu$ are the mutually singular parts of~$\mu$ and~$\nu$. \end{proposition}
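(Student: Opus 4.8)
The plan is to verify that
$P:=\id(\mu\wedge\nu)+P_{*}(\mu',\nu')$ is a directional coupling of $\mu$ and $\nu$ which is carried by a set with the constrained crossing property, and then to conclude $P=P_{*}(\mu,\nu)$ from Theorem~\ref{th:main}\,(iv)$\Rightarrow$(v). First I would check that the right-hand side makes sense. From $F_{\mu'}=F_{\mu}-F_{\mu\wedge\nu}$, $F_{\nu'}=F_{\nu}-F_{\mu\wedge\nu}$ and the standing assumption $F_{\mu}\geq F_{\nu}$ we get $F_{\mu'}\geq F_{\nu'}$, and $\mu'(\R)=\nu'(\R)=1-(\mu\wedge\nu)(\R)$, so $\mu'\lst\nu'$ and $P_{*}(\mu',\nu')$ is defined (via the evident extension of $P_{*}$ to subprobabilities in stochastic order; after normalizing its total mass it inherits the equivalences of Theorem~\ref{th:main}, and if that mass is $0$ then $\mu=\nu$ and the assertion is trivial). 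Then $P$ is a positive measure of total mass $(\mu\wedge\nu)(\R)+\big(1-(\mu\wedge\nu)(\R)\big)=1$; since both marginals of $\id(\mu\wedge\nu)$ equal $\mu\wedge\nu$, the marginals of $P$ are $\mu\wedge\nu+\mu'=\mu$ and $\mu\wedge\nu+\nu'=\nu$; and since $\id(\mu\wedge\nu)$ is concentrated on the diagonal $\Delta:=\{(t,t):t\in\R\}\subseteq\H$ while $P_{*}(\mu',\nu')$ is concentrated on $\H$, we have $P\in\cD(\mu,\nu)$.

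Next I would exhibit a support of $P$ with the constrained crossing property. Pick a Borel set $\Gamma'\subseteq\H$ carrying $P_{*}(\mu',\nu')$ and having the constrained crossing property, which exists by the geometric characterization of Theorem~\ref{th:main} applied to $P_{*}(\mu',\nu')$. Then $\Gamma:=\Delta\cup\Gamma'$ is a Borel subset of $\H$ with $P(\Gamma)=1$. The only thing to check is that $\Gamma$ contains no improvable pair, and the point is that $\Delta$ is irrelevant here: if $((x,y),(x',y'))$ is improvable then $x<x'\leq y<y'$ forces $y>x$ and $y'>x'$, so both points lie strictly above the diagonal and neither can belong to $\Delta$. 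Hence an improvable pair in $\Gamma$ would be an improvable pair in $\Gamma'$, of which there are none. By Theorem~\ref{th:main}\,(iv)$\Rightarrow$(v), $P=P_{*}(\mu,\nu)$, as claimed.

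I do not expect a genuine obstacle; the entire content is the elementary observation that a point on the diagonal never participates in an improvable pair. (This argument in fact shows $\id(\theta)+P_{*}(\mu-\theta,\nu-\theta)=P_{*}(\mu,\nu)$ for \emph{every} subprobability $\theta$ with $0\le\theta\le\mu\wedge\nu$; the choice $\theta=\mu\wedge\nu$ is simply the maximal one, which is what renders $\mu'$ and $\nu'$ mutually singular.) An alternative route avoiding the geometry: using $F_{\id(\mu\wedge\nu)}(x,y)=F_{\mu\wedge\nu}(x\wedge y)$ together with Corollary~\ref{co:cdf} applied to $(\mu',\nu')$ and the identities $F_{\mu\wedge\nu}+F_{\mu'}=F_{\mu}$ and $F_{\mu'}-F_{\nu'}=F_{\mu}-F_{\nu}$, one checks at once that $F_{P}$ coincides with the cdf of $P_{*}(\mu,\nu)$ from Corollary~\ref{co:cdf} in both cases $y\leq x$ and $y>x$; since a probability measure on $\R^{2}$ is determined by its cdf, this again gives $P=P_{*}(\mu,\nu)$.
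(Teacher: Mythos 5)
Your argument is correct and is essentially the paper's own proof: form $P=\id(\mu\wedge\nu)+P_{*}(\mu',\nu')$, support it on $\Gamma=\Delta\cup\Gamma'$, note that improvable pairs lie strictly above the diagonal so the crossing property survives adding $\Delta$, and conclude via Theorem~\ref{th:main}\,(iv)$\Rightarrow$(v). You merely spell out the marginal and stochastic-order checks that the paper leaves implicit (and sketch an alternative cdf verification), so there is nothing to add.
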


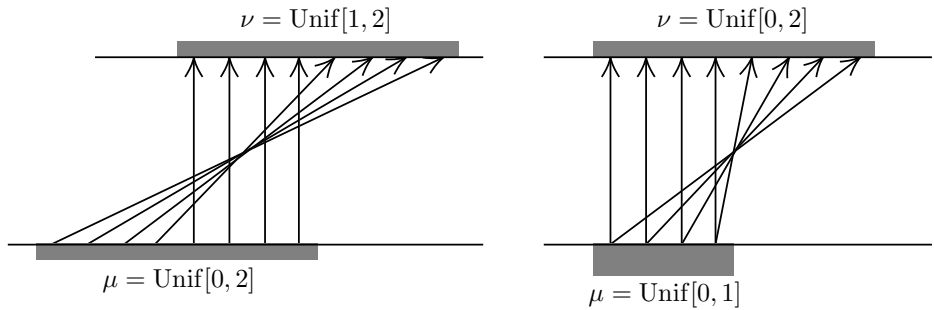
\begin{figure}[b]
\begin{center}
\vspace{-.2em}
\resizebox{\textwidth}{!}{

\tikzset{every picture/.style={line width=0.75pt}} %

\begin{tikzpicture}[x=0.75pt,y=0.75pt,yscale=-1,xscale=1]
\draw [fill={rgb, 255:red, 255; green, 255; blue, 255 }  ,fill opacity=1 ]   (270.7,50.86) -- (50.5,156) ;
\draw [shift={(272.5,50)}, rotate = 154.48] [color={rgb, 255:red, 0; green, 0; blue, 0 }  ][line width=0.75]    (10.93,-4.9) .. controls (6.95,-2.3) and (3.31,-0.67) .. (0,0) .. controls (3.31,0.67) and (6.95,2.3) .. (10.93,4.9)   ;
\draw [fill={rgb, 255:red, 255; green, 255; blue, 255 }  ,fill opacity=1 ]   (249.78,51.01) -- (67.5,158) ;
\draw [shift={(251.5,50)}, rotate = 149.59] [color={rgb, 255:red, 0; green, 0; blue, 0 }  ][line width=0.75]    (10.93,-4.9) .. controls (6.95,-2.3) and (3.31,-0.67) .. (0,0) .. controls (3.31,0.67) and (6.95,2.3) .. (10.93,4.9)   ;
\draw [fill={rgb, 255:red, 255; green, 255; blue, 255 }  ,fill opacity=1 ]   (230.9,51.2) -- (90.5,157) ;
\draw [shift={(232.5,50)}, rotate = 143] [color={rgb, 255:red, 0; green, 0; blue, 0 }  ][line width=0.75]    (10.93,-4.9) .. controls (6.95,-2.3) and (3.31,-0.67) .. (0,0) .. controls (3.31,0.67) and (6.95,2.3) .. (10.93,4.9)   ;
\draw [fill={rgb, 255:red, 255; green, 255; blue, 255 }  ,fill opacity=1 ]   (209.61,51.94) -- (107.5,158) ;
\draw [shift={(211,50.5)}, rotate = 133.91] [color={rgb, 255:red, 0; green, 0; blue, 0 }  ][line width=0.75]    (10.93,-4.9) .. controls (6.95,-2.3) and (3.31,-0.67) .. (0,0) .. controls (3.31,0.67) and (6.95,2.3) .. (10.93,4.9)   ;
\draw [fill={rgb, 255:red, 255; green, 255; blue, 255 }  ,fill opacity=1 ]   (131.97,52.26) -- (131.97,155.26) ;
\draw [shift={(131.97,50.26)}, rotate = 90] [color={rgb, 255:red, 0; green, 0; blue, 0 }  ][line width=0.75]    (10.93,-4.9) .. controls (6.95,-2.3) and (3.31,-0.67) .. (0,0) .. controls (3.31,0.67) and (6.95,2.3) .. (10.93,4.9)   ;
\draw [fill={rgb, 255:red, 255; green, 255; blue, 255 }  ,fill opacity=1 ]   (151.97,52.26) -- (151.97,155.26) ;
\draw [shift={(151.97,50.26)}, rotate = 90] [color={rgb, 255:red, 0; green, 0; blue, 0 }  ][line width=0.75]    (10.93,-4.9) .. controls (6.95,-2.3) and (3.31,-0.67) .. (0,0) .. controls (3.31,0.67) and (6.95,2.3) .. (10.93,4.9)   ;
\draw [fill={rgb, 255:red, 255; green, 255; blue, 255 }  ,fill opacity=1 ]   (171.97,52.26) -- (171.97,155.26) ;
\draw [shift={(171.97,50.26)}, rotate = 90] [color={rgb, 255:red, 0; green, 0; blue, 0 }  ][line width=0.75]    (10.93,-4.9) .. controls (6.95,-2.3) and (3.31,-0.67) .. (0,0) .. controls (3.31,0.67) and (6.95,2.3) .. (10.93,4.9)   ;
\draw [fill={rgb, 255:red, 255; green, 255; blue, 255 }  ,fill opacity=1 ]   (190.97,52.26) -- (190.97,155.26) ;
\draw [shift={(190.97,50.26)}, rotate = 90] [color={rgb, 255:red, 0; green, 0; blue, 0 }  ][line width=0.75]    (10.93,-4.9) .. controls (6.95,-2.3) and (3.31,-0.67) .. (0,0) .. controls (3.31,0.67) and (6.95,2.3) .. (10.93,4.9)   ;
\draw [fill={rgb, 255:red, 255; green, 255; blue, 255 }  ,fill opacity=1 ]   (504.9,51.2) -- (365.97,155.26) ;
\draw [shift={(506.5,50)}, rotate = 143.17] [color={rgb, 255:red, 0; green, 0; blue, 0 }  ][line width=0.75]    (10.93,-4.9) .. controls (6.95,-2.3) and (3.31,-0.67) .. (0,0) .. controls (3.31,0.67) and (6.95,2.3) .. (10.93,4.9)   ;
\draw [fill={rgb, 255:red, 255; green, 255; blue, 255 }  ,fill opacity=1 ]   (484.13,51.45) -- (385.97,155.26) ;
\draw [shift={(485.5,50)}, rotate = 133.4] [color={rgb, 255:red, 0; green, 0; blue, 0 }  ][line width=0.75]    (10.93,-4.9) .. controls (6.95,-2.3) and (3.31,-0.67) .. (0,0) .. controls (3.31,0.67) and (6.95,2.3) .. (10.93,4.9)   ;
\draw [fill={rgb, 255:red, 255; green, 255; blue, 255 }  ,fill opacity=1 ]   (465.5,51.73) -- (405.97,155.26) ;
\draw [shift={(466.5,50)}, rotate = 119.9] [color={rgb, 255:red, 0; green, 0; blue, 0 }  ][line width=0.75]    (10.93,-4.9) .. controls (6.95,-2.3) and (3.31,-0.67) .. (0,0) .. controls (3.31,0.67) and (6.95,2.3) .. (10.93,4.9)   ;
\draw [fill={rgb, 255:red, 255; green, 255; blue, 255 }  ,fill opacity=1 ]   (445.12,51.96) -- (424.97,155.26) ;
\draw [shift={(445.5,50)}, rotate = 101.03] [color={rgb, 255:red, 0; green, 0; blue, 0 }  ][line width=0.75]    (10.93,-4.9) .. controls (6.95,-2.3) and (3.31,-0.67) .. (0,0) .. controls (3.31,0.67) and (6.95,2.3) .. (10.93,4.9)   ;
\draw [fill={rgb, 255:red, 255; green, 255; blue, 255 }  ,fill opacity=1 ]   (365.97,52.26) -- (365.97,155.26) ;
\draw [shift={(365.97,50.26)}, rotate = 90] [color={rgb, 255:red, 0; green, 0; blue, 0 }  ][line width=0.75]    (10.93,-4.9) .. controls (6.95,-2.3) and (3.31,-0.67) .. (0,0) .. controls (3.31,0.67) and (6.95,2.3) .. (10.93,4.9)   ;
\draw [fill={rgb, 255:red, 255; green, 255; blue, 255 }  ,fill opacity=1 ]   (385.97,52.26) -- (385.97,155.26) ;
\draw [shift={(385.97,50.26)}, rotate = 90] [color={rgb, 255:red, 0; green, 0; blue, 0 }  ][line width=0.75]    (10.93,-4.9) .. controls (6.95,-2.3) and (3.31,-0.67) .. (0,0) .. controls (3.31,0.67) and (6.95,2.3) .. (10.93,4.9)   ;
\draw [fill={rgb, 255:red, 255; green, 255; blue, 255 }  ,fill opacity=1 ]   (405.97,52.26) -- (405.97,155.26) ;
\draw [shift={(405.97,50.26)}, rotate = 90] [color={rgb, 255:red, 0; green, 0; blue, 0 }  ][line width=0.75]    (10.93,-4.9) .. controls (6.95,-2.3) and (3.31,-0.67) .. (0,0) .. controls (3.31,0.67) and (6.95,2.3) .. (10.93,4.9)   ;
\draw [fill={rgb, 255:red, 255; green, 255; blue, 255 }  ,fill opacity=1 ]   (424.97,52.26) -- (424.97,155.26) ;
\draw [shift={(424.97,50.26)}, rotate = 90] [color={rgb, 255:red, 0; green, 0; blue, 0 }  ][line width=0.75]    (10.93,-4.9) .. controls (6.95,-2.3) and (3.31,-0.67) .. (0,0) .. controls (3.31,0.67) and (6.95,2.3) .. (10.93,4.9)   ;
\draw  [draw opacity=0][fill={rgb, 255:red, 128; green, 128; blue, 128 }  ,fill opacity=1 ] (356.5,41) -- (514.5,41) -- (514.5,50) -- (356.5,50) -- cycle ;
\draw  [draw opacity=0][fill={rgb, 255:red, 128; green, 128; blue, 128 }  ,fill opacity=1 ] (122.5,41) -- (280.5,41) -- (280.5,50) -- (122.5,50) -- cycle ;
\draw  [draw opacity=0][fill={rgb, 255:red, 128; green, 128; blue, 128 }  ,fill opacity=1 ] (43.5,155) -- (201.5,155) -- (201.5,164) -- (43.5,164) -- cycle ;
\draw  [draw opacity=0][fill={rgb, 255:red, 128; green, 128; blue, 128 }  ,fill opacity=1 ] (356.5,155) -- (435.5,155) -- (435.5,173) -- (356.5,173) -- cycle ;
\draw    (328.5,50.13) -- (546.5,50.13) ;
\draw    (328.5,155.45) -- (546.5,155.45) ;
\draw    (76.5,50.13) -- (294.5,50.13) ;
\draw    (27.5,155.45) -- (294.5,155.45) ;

\draw (396.72,183.61) node    {$\mu =\mathrm{Unif}[ 0,1]$};
\draw (434.72,29.61) node    {$\nu =\mathrm{Unif}[ 0,2]$};
\draw (123.72,175.61) node    {$\mu =\mathrm{Unif}[ 0,2]$};
\draw (200.72,29.61) node    {$\nu =\mathrm{Unif}[ 1,2]$};

\end{tikzpicture}

}
\end{center}
\vspace{-1.9em}\caption{Illustration of Proposition~\ref{pr:decompose} (left) and Example~\ref{ex:randomizedUniform} (right).}\vspace{-1em}
\label{fi:identity}
\end{figure}

A coupling $P$ is of \emph{Monge-type} if $P(Y|X)=T(X)$ is a deterministic function $T$ of $X$ which is then called a Monge map or transport map of~$P$. Equivalently, the stochastic kernel $\kappa$ in the decomposition $P=\mu\otimes\kappa$ has the form $\kappa(x,dy)=\delta_{T(x)}(dy)$ $\mu$-a.s. Proposition~\ref{pr:decompose} suggests that the constrained nature of our transport problem may render $P_{*}$ randomized (i.e., not of Monge-type) even in the absence of atoms.

\begin{example}\label{ex:randomizedUniform}
  Let $\mu=\Unif[0,1]$ and $\nu=\Unif[0,2]$. Then $\mu\lst\nu$ and there are no atoms, yet $P_{*}$ has non-deterministic kernel $\kappa(x)=\frac12(\delta_{x}+\delta_{2-x})$; cf.\ Figure~\ref{fi:identity}. This can be seen, e.g., from the constrained crossing property.
\end{example}

The next results show that this example is representative: the ``coin-flip'' randomization into two maps is the only randomization in $P_{*}$ when $\mu$ is atomless, and it occurs if and only if $\mu\wedge \nu$ and $\mu-\mu\wedge \nu $ are not mutually singular.     The second transport map can also be analyzed in detail. To that end, suppose first that 
$\mu\wedge \nu=0$, so that $(\mu,\nu)$ is already in the reduced form $(\mu',\nu')$ of Proposition~\ref{pr:decompose}. Moreover, suppose for the moment that the marginals are atomless---we discuss later how to reduce atoms to diffuse measures. %
With the convention $\inf\emptyset =\infty$, we have the following (see Figure~\ref{fi:cdfandT} for the graphical interpretation).

\begin{theorem}\label{th:main-transport}
  Let $\mu,\nu$ be atomless and $\mu\wedge \nu=0$. Then $P_{*}$ is of Monge-type with transport map~$T$ given by 
$$%
  T(x)=\inf\{y\geq x:\, (y,F(x))\notin H\}
$$%
for the function $F=F_{\mu}-F_{\nu}$ and its hypograph $H = \{(x,z):\, z\leq F(x)\}$.
\end{theorem}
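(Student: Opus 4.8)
The plan is to guess the map, build the candidate coupling $Q:=(\id,T)_{\#}\mu$ (the law of $(X,T(X))$ under $\mu$), and then recognize $Q=P_{*}$ via the characterizations already in hand. Since $T(x)\geq x$ by construction, $Q$ is concentrated on $\H$, and its first marginal is $\mu$; so if I can show (a) $T_{\#}\mu=\nu$, so that $Q\in\cD(\mu,\nu)$, and (b) $Q$ is concentrated on a set with the constrained crossing property, then $Q=P_{*}$ by the equivalence (iv)$\Leftrightarrow$(v) of Theorem~\ref{th:main}; and since $Q=(\id,T)_{\#}\mu$ has disintegration $\delta_{T(x)}$, this is exactly the assertion. (Alternatively one can verify directly that the cdf of $Q$ is the formula of Corollary~\ref{co:cdf}; the ingredients are the same.) First I would record basic properties of $T$. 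Atomlessness makes $F=F_{\mu}-F_{\nu}$ continuous; $\mu\lst\nu$ gives $F\geq0$; and $F(x)\to0$ as $x\to\pm\infty$. A preliminary observation is $\mu(\{F=0\})=0$: the level set $A=\{F=0\}$ is closed, contains a left half-line, and satisfies $\mu((x_{1},x_{2}])=\nu((x_{1},x_{2}])$ whenever $x_{1}<x_{2}$ lie in $A$, which forces $\mu|_{A}=\nu|_{A}$ and hence $\mu|_{A}\leq\mu\wedge\nu=0$. Consequently, for $\mu$-a.e.\ $x$ one has $F(x)>0$, so $T(x)\in[x,\infty)$ is finite, $F\geq F(x)$ on $[x,T(x)]$, and $F(T(x))=F(x)$ by continuity; moreover $T$ is Borel.

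\textbf{The easy part (b): constrained crossing property.} Suppose $((x,T(x)),(x',T(x')))$ with $F(x),F(x')>0$ were improvable, i.e.\ $x<x'\leq T(x)<T(x')$. From $x'\in[x,T(x)]$ we get $F(x')\geq F(x)$. If $F(x')>F(x)$, then $F(T(x))=F(x)<F(x')$ together with $T(x)\geq x'$ shows $T(x)\in\{z\geq x':F(z)<F(x')\}$, so $T(x')\leq T(x)$ --- a contradiction. If $F(x')=F(x)$, then $F\geq F(x')$ on $[x',T(x)]$ gives $T(x')\geq T(x)$, whereas any sequence $z_{n}\downarrow T(x)$ with $F(z_{n})<F(x)=F(x')$ (one exists by definition of $T(x)$) lies in $\{z\geq x':F(z)<F(x')\}$ and forces $T(x')\leq T(x)$; so $T(x')=T(x)$, again contradicting $T(x)<T(x')$. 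Hence the graph of $T$ over $\{F>0\}$, a set of full $\mu$-measure which carries $Q$, contains no improvable pairs.

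\textbf{The hard part (a): $T_{\#}\mu=\nu$.} Since $Q$ is a probability it suffices to prove $\mu(\{x:T(x)\le y\})=F_{\nu}(y)$ for every $y$. The plan is a running-infimum (Skorokhod-reflection) identity. Fix $y$, let $\phi_{y}(w):=\inf_{z\in[w,y]}F(z)$ for $w\le y$, and $R_{y}:=\{w\le y:F(w)=\phi_{y}(w)\}$. Elementary facts about running infima of a continuous BV function give that $\phi_{y}$ is nondecreasing and continuous with $\phi_{y}(-\infty)=0$, $\phi_{y}(y)=F(y)$, that the Stieltjes measure $d\phi_{y}$ is carried by $R_{y}$, and that $d\phi_{y}=dF$ on $R_{y}$. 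Now $dF=d\mu-d\nu$ and $d\phi_{y}\geq0$, so $\mu\geq\nu$ on Borel subsets of $R_{y}$; combined with $\mu\wedge\nu=0$ this yields $\nu(R_{y})=0$ and $d\mu=dF=d\phi_{y}$ on $R_{y}$, whence $\mu(R_{y})=\int_{(-\infty,y]}d\phi_{y}=\phi_{y}(y)=F(y)$, i.e.\ $\mu((-\infty,y]\setminus R_{y})=F_{\mu}(y)-F(y)=F_{\nu}(y)$. It remains to match $\{x:T(x)\le y\}$ with $(-\infty,y]\setminus R_{y}$ up to $\mu$-null sets: if $x\le y$ and some $z\in[x,y]$ has $F(z)<F(x)$ then $T(x)\le z\le y$; conversely $T(x)\le y$ with $T(x)<y$ produces such a $z\in(T(x),y)$, while $T(x)=y$ forces $F(y)=F(x)$ and $F\geq F(x)$ on $[x,y]$, so $x\in R_{y}$. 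Thus the two sets differ only within $\{x:T(x)=y\}$, and $\int_{\R}\mu(\{x:T(x)=y\})\,dy=0$ by Fubini, so $\mu(\{x:T(x)=y\})=0$ for a.e.\ $y$; since $y\mapsto\mu(\{x:T(x)\le y\})$ and $F_{\nu}$ are both right-continuous, equality for a.e.\ $y$ upgrades to all $y$. This completes (a), and hence the proof.

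\textbf{Main obstacle.} The only substantial step is (a): reconciling the greedy, pointwise definition of $T$ with the global constraint $T_{\#}\mu=\nu$. Everything hinges on the interaction between the running-infimum structure of $F$ and mutual singularity --- specifically, that the record-minimum set $R_{y}$ carries no $\nu$-mass and that $d\mu$ agrees there with the increments of the running infimum --- while atomlessness enters both to make $F$ continuous (so $T$ returns $F$ exactly to its starting level) and, via Fubini, to discard the level sets $\{F=F(y)\}$; the reduction to $\mu\wedge\nu=0$ supplied by Proposition~\ref{pr:decompose} is precisely what makes $\nu(R_{y})=0$ available.
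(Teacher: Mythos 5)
Your architecture coincides with the paper's: build $Q=\mu\otimes\delta_{T}$, check that $T$ pushes $\mu$ to $\nu$ and that the graph of $T$ has the constrained crossing property, then conclude $Q=P_{*}$ from Theorem~\ref{th:main}\,(iv)$\Rightarrow$(v). Your crossing argument (case distinction on $F(x')>F(x)$ versus $F(x')=F(x)$, using $F\geq F(x)$ on $[x,T(x)]$ and $F(T(x))=F(x)$) is a correct algebraic version of the paper's geometric rectangle argument (Lemma~\ref{le:TconstrainedCrossing}). Where you genuinely diverge is the marginal identity $T_{\#}\mu=\nu$: the paper's Lemma~\ref{le:Tcouples} rests on Proposition~\ref{pr:HahnDecomposition} ($\mu$ is carried by the points of strict increase of $F$, $\nu$ by the points of strict decrease), proved in Section~\ref{se:tech1} via Lebesgue differentiation along the variation; you instead work with the contact set $R_{y}$ of the running infimum $\phi_{y}$, and your bookkeeping (carried-by-$R_y$, $\nu(R_{y})=0$ from mutual singularity, $\mu(R_{y})=F(y)$, the Fubini/right-continuity patch for $\{T=y\}$) is correct.

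The one real issue is that you dispose of the crux by calling it an ``elementary fact about running infima'': the identity $d\phi_{y}=dF$ on $R_{y}$. The carrying statement $d\phi_{y}(R_{y}^{c})=0$ is indeed elementary, but $d\phi_{y}=dF$ on $R_{y}$ is equivalent to $|d(F-\phi_{y})|(\{F-\phi_{y}=0\})=0$, i.e.\ to the statement that the zero set of the continuous BV function $G=F-\phi_{y}$ is null for its variation measure --- exactly the kind of statement the paper builds Section~\ref{se:tech1} to supply, and not something you can cite off the shelf. As written this is the gap. It is fillable in two ways: (a) apply Proposition~\ref{pr:HahnDecomposition} to $G$ (its Jordan parts are carried by points of strict increase/decrease of $G$, and since $G\geq0$ no such point lies in $\{G=0\}$); or (b) run the very argument you sketch for $\mu(\{F=0\})=0$ on $G$ and its closed zero set $Z=R_{y}$: for $a<b\in Z$ one has $dG((a,b])=0$, the open set $(a,b)\setminus Z$ decomposes into intervals with endpoints in $Z$ on which $dG$ also vanishes, hence $dG$ vanishes on traces $(a,b]\cap Z$, so the Jordan parts of $dG$ agree on $\cB(Z)$ and, being mutually singular, both vanish there. (That same completion is also what your phrase ``which forces $\mu|_{A}=\nu|_{A}$'' needs; and note two small blemishes: $\{F=0\}$ need not contain a left half-line --- harmless, as you never use it --- and the Borel measurability of $T$, which the paper gets from upper semicontinuity of $T$ since $H$ is closed, deserves a line rather than an assertion.)
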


The proof proceeds by showing that $T$ couples $\mu$ and $\nu$ and that the graph of $T$ satisfies the constrained crossing property. Some of our considerations regarding the local regularity of $F$ may be of independent interest.
Combining the last two results and noting that $F_{\mu}-F_{\nu}=F_{\mu'}-F_{\nu'}$ in Proposition~\ref{pr:decompose}, we deduce the aforementioned assertion on the coin-flip.

\begin{corollary}\label{co:idAndT}
   Let $\mu,\nu$ be atomless. Then
  $$
    P_{*}(\mu,\nu)= (\mu\wedge\nu)\otimes_{x}\delta_{x} + \mu'\otimes_{x}\delta_{T(x)}
  $$
  where 
  $\mu'=\mu-\mu\wedge\nu$. In particular, $P_{*}$ is of Monge-type if and only if $\mu'$ and $\mu\wedge\nu$ are mutually singular.
\end{corollary}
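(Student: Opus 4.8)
The plan is to chain Proposition~\ref{pr:decompose} with Theorem~\ref{th:main-transport}. By Proposition~\ref{pr:decompose}, $P_{*}(\mu,\nu)=(\mu\wedge\nu)\otimes_{x}\delta_{x}+P_{*}(\mu',\nu')$ with $\mu'=\mu-\mu\wedge\nu$ and $\nu'=\nu-\mu\wedge\nu$ mutually singular. Both $\mu'$ and $\nu'$ are atomless, being dominated by the atomless measures $\mu,\nu$, and mutual singularity gives $\mu'\wedge\nu'=0$, so Theorem~\ref{th:main-transport} applies to the pair $(\mu',\nu')$. The elementary but crucial observation is that $F_{\mu'}-F_{\nu'}=(F_{\mu}-F_{\mu\wedge\nu})-(F_{\nu}-F_{\mu\wedge\nu})=F_{\mu}-F_{\nu}=F$; hence the transport map produced by Theorem~\ref{th:main-transport} for $(\mu',\nu')$ coincides with the $T$ of the statement, and $P_{*}(\mu',\nu')=\mu'\otimes_{x}\delta_{T(x)}$. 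Substituting back yields the displayed identity.

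For the Monge-type dichotomy I would turn the additive decomposition into a genuine disintegration along $\mu$. With $f:=d(\mu\wedge\nu)/d\mu\in[0,1]$, so that $\mu\wedge\nu=f\mu$ and $\mu'=(1-f)\mu$, a direct computation gives $P_{*}(\mu,\nu)=\mu\otimes_{x}\kappa(x)$ with $\kappa(x)=f(x)\delta_{x}+(1-f(x))\delta_{T(x)}$. Thus $P_{*}$ is of Monge-type exactly when $\kappa(x)$ is a Dirac mass for $\mu$-a.e.\ $x$, i.e.\ when $f(x)\in\{0,1\}$ or $T(x)=x$ holds for $\mu$-a.e.\ $x$. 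Since $\mu'$ and $\mu\wedge\nu$ are mutually singular if and only if $f\in\{0,1\}$ $\mu$-a.e., the remaining point is that the fixed points of $T$ cannot rescue Monge-type on the set $\{0<f<1\}$; that is, $T(x)\neq x$ for $\mu'$-a.e.\ $x$.

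This follows from the identity $T_{\#}\mu'=\nu'$ built into Theorem~\ref{th:main-transport} (the map $T$ transports $\mu'$ to $\nu'$): on $E:=\{x:\,T(x)=x\}$ one has $T_{\#}(\mu'|_{E})=\mu'|_{E}$, while $T_{\#}(\mu'|_{E})\le T_{\#}\mu'=\nu'$, whence $\mu'|_{E}\le\mu'\wedge\nu'=0$ and $\mu'(E)=0$. Consequently, if $\mu'\perp\mu\wedge\nu$ then $\kappa(x)\in\{\delta_{x},\delta_{T(x)}\}$ for $\mu$-a.e.\ $x$ and $P_{*}$ is of Monge-type, with transport map equal to $\id$ on $\{f=1\}$ and to $T$ on $\{f=0\}$. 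If instead $\mu'\not\perp\mu\wedge\nu$, then $\mu(\{0<f<1\})>0$, hence $\mu'(\{0<f<1\})=\int_{\{0<f<1\}}(1-f)\,d\mu>0$; intersecting with the full-$\mu'$-measure set $\{T\neq\id\}$ produces a set of positive $\mu$-measure on which $0<f(x)<1$ and $x\neq T(x)$, so there $\kappa(x)$ is a genuine two-point mixture and $P_{*}$ is not of Monge-type.

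The only real obstacle is the claim that $T\neq\id$ $\mu'$-a.e.; the pushforward argument above settles it cleanly, though one could alternatively derive it from the local behavior of $F$ near points charged by the mutually singular part $\mu'$, where $F$ is nondecreasing to the right.
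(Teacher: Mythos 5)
Your proof is correct and takes essentially the same route as the paper: the displayed identity is exactly the paper's combination of Proposition~\ref{pr:decompose} with Theorem~\ref{th:main-transport}, using that $F_{\mu'}-F_{\nu'}=F_\mu-F_\nu$ so the map $T$ for $(\mu',\nu')$ is the one in the statement. The paper leaves the Monge-type dichotomy implicit, and your verification of it is sound --- in particular the key point that $T_{\#}\mu'=\nu'$ together with $\mu'\wedge\nu'=0$ forces $\mu'(\{T=\id\})=0$, so on $\{0<f<1\}$ the kernel $f(x)\delta_x+(1-f(x))\delta_{T(x)}$ is a genuine two-point mixture --- cleanly fills that gap.
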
 

This result immediately extends to the case where $\nu$ has atoms, essentially by ``filling in'' vertical lines in the graph of $F$ where there are jumps (cf.\ Figure~\ref{fi:cdfandT}). Using a simple transformation detailed in Section~\ref{se:atoms}, it also generalizes to atoms in both marginals, but then $T$ is replaced by a (possibly randomized) coupling; see Theorem~\ref{th:atomsTransform}.

We remark that the invariance property in Corollary~\ref{co:invariance} translates immediately: if $T$ is the map of  $P_*(\mu,\nu)$, then $T^\phi := \phi\circ T\circ \phi^{-1}$ is that of  $P_*(\mu\circ \phi^{-1},\nu\circ \phi^{-1})$; in other words, $T^\phi$ transports $\phi(x)$ to $\phi(y)$ whenever $T$ transports $x$ to $y$.

While we consider the above the main results, three further considerations are presented in Section~\ref{se:furtherProperties}. We discuss when and how $P_{*}$ can be decomposed as a sum of antitone couplings of sub-marginals, remark that $P_{*}$ occurs as optimizer in specific unconstrained transport problems, and finally offer an extension to cone constraints more general  than $Y\geq X$.

\section{Equivalent Characterizations of $P_{*}$}\label{se:characterizations}

In this section we prove Theorems~\ref{th:existenceAndMinimality}--\ref{th:main} and Proposition~\ref{pr:decompose}, the latter being a consequence of the former.
The first step is to show that $\nu_{x}$ in Theorem~\ref{th:existenceAndMinimality} is well-defined. We write $\cM$ for the set of finite measures on $\R$ and recall that  $\theta_1,\theta_2\in\cM$ satisfy $\theta_1\lst \theta_2$ if $\theta_{1}(\R)=\theta_{2}(\R)$ and $F_{\theta_{1}}\geq F_{\theta_{2}}$.

\begin{lemma}\label{le:shadow}
  Let $\mu_{0}\leq \mu$. The set $S=\{\theta\in \cM: \mu_0 \lst \theta  \le \nu \}$ has a unique minimal element $\theta_{*}$; that is, $\theta_{*} \in S$ and  $\theta_{*} \lst \theta$ for all $\theta\in S$. The measure~$\theta_{*}$ has cdf $\sup_{\theta\in S} F_{\theta}$ and we denote $\shadow{\mu_{0}}{\nu} :=\theta_{*}$.
\end{lemma}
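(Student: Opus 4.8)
The plan is to establish the three assertions of Lemma~\ref{le:shadow} in turn: that the pointwise supremum $F_* := \sup_{\theta \in S} F_\theta$ is the cdf of a genuine measure $\theta_*$ of the correct mass, that $\theta_* \in S$, and that $\theta_*$ is $\lst$-minimal in $S$. First I would note that $S \neq \emptyset$ since $\nu \in S$ whenever $\mu_0 \lst \nu$ (which holds: $F_{\mu_0} \ge F_\mu \ge F_\nu$, using $\mu_0 \le \mu$ and the standing assumption $\mu \lst \nu$). The function $F_*$ is clearly nondecreasing, right-continuous fails in general for a sup of right-continuous functions, so the key regularity point is to show $F_*$ is right-continuous; I would argue this by a standard lattice/stability argument---$S$ is closed under the pairwise operation $\theta \wedge \theta'$ in the sense of taking $F_{\theta \vee_{\mathrm{st}} \theta'} = \max(F_\theta, F_{\theta'})$, because if $\mu_0 \lst \theta$, $\mu_0 \lst \theta'$, $\theta \le \nu$, $\theta' \le \nu$, then $\max(F_\theta, F_{\theta'})$ is the cdf of a measure still $\le \nu$ (pointwise max of two cdf's dominated by $F_\nu$ stays dominated, and one checks it is a valid cdf of a measure bounded by $\nu$ by looking at increments) and still $\lst$-above $\mu_0$. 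Hence $\{F_\theta : \theta \in S\}$ is upward directed, so its pointwise supremum is automatically right-continuous, and has limits $0$ at $-\infty$ and $\mu_0(\R)$ at $+\infty$ (the upper limit: each $F_\theta \to \theta(\R) = \mu_0(\R)$, and they are uniformly bounded by $F_\nu \to \nu(\R)$; one needs $\sup_\theta \theta(\R) = \mu_0(\R)$, which is forced since every $\theta \in S$ has mass exactly $\mu_0(\R)$). Thus $F_*$ is the cdf of a finite measure $\theta_*$ with $\theta_*(\R) = \mu_0(\R)$.

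Next I would verify $\theta_* \in S$. We have $F_{\theta_*} = F_* \ge F_{\mu_0}$ by definition (since each $F_\theta \ge F_{\mu_0}$), giving $\mu_0 \lst \theta_*$. For $\theta_* \le \nu$: it suffices to check $\theta_*((a,b]) \le \nu((a,b])$ for all $a < b$, i.e.\ $F_*(b) - F_*(a) \le F_\nu(b) - F_\nu(a)$. Given $\eps > 0$, pick $\theta \in S$ with $F_\theta(b) \ge F_*(b) - \eps$; then $F_*(b) - F_*(a) \le F_\theta(b) + \eps - F_\theta(a) \le \nu((a,b]) + \eps$ using $F_*(a) \ge F_\theta(a)$ and $\theta \le \nu$. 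Letting $\eps \to 0$ gives the claim; a similar argument handles a general Borel set by outer regularity, or one invokes that agreement of the measure on intervals suffices. Finally, minimality: if $\theta \in S$ then $F_\theta \le F_*= F_{\theta_*}$ by definition of the supremum, which is exactly $\theta_* \lst \theta$. Uniqueness of the minimal element is immediate since two $\lst$-minimal elements dominate each other and hence coincide.

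The main obstacle I anticipate is the regularity/validity argument in the first step: showing that the pointwise supremum of the cdf's in $S$ is itself a bona fide cdf of a measure with mass $\mu_0(\R)$ and, crucially, right-continuous. The upward-directedness of $S$ under pairwise $\mathrm{st}$-maximum is the clean way around this, so I would want to state and prove that lemma carefully---in particular the claim that $\max(F_\theta, F_{\theta'})$ is the cdf of a measure still dominated by $\nu$. One clean way: given $\theta, \theta' \le \nu$, the set function $\theta''((a,b]) := \max(F_\theta, F_{\theta'})(b) - \max(F_\theta, F_{\theta'})(a)$ extends to a measure iff $\max(F_\theta, F_{\theta'})$ is nondecreasing and right-continuous, which holds; and domination $\theta'' \le \nu$ follows because on any interval where $F_\theta$ dominates we use $\theta \le \nu$, on any interval where $F_{\theta'}$ dominates we use $\theta' \le \nu$, and at crossover points one splits the interval---this is a routine but slightly fiddly "which function is larger" case analysis. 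With directedness in hand, everything else is standard measure-theoretic bookkeeping.
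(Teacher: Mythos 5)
The crucial regularity step fails as justified. You claim that because $\{F_\theta:\theta\in S\}$ is upward directed under pointwise maxima, its pointwise supremum is ``automatically right-continuous.'' That principle is false: the cdfs of the point masses $\delta_{1/n}$ form an increasing (hence directed) family of right-continuous cdfs whose pointwise supremum is $\1_{(0,\infty)}$, not right-continuous at $0$. Directedness by itself buys nothing here; what makes $F_*$ right-continuous is the constraint $\theta\le\nu$, which controls increments uniformly: $F_\theta(y)-F_\theta(x)\le F_\nu(y)-F_\nu(x)$ for every $\theta\in S$, hence $F_*(y)-F_*(x)\le\sup_{\theta\in S}[F_\theta(y)-F_\theta(x)]\le F_\nu(y)-F_\nu(x)\to0$ as $y\downarrow x$. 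This one-line estimate is exactly the paper's argument, and it is the very same increment bound you already use (correctly) to prove $\theta_*\le\nu$; once it is invoked, the entire directedness lemma is unnecessary (your claim that $\max(F_\theta,F_{\theta'})$ is the cdf of a measure dominated by $\nu$ is in fact true, and follows from the same increment observation without any case analysis, but it does not deliver right-continuity of the supremum).

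There are also two direction/mass slips. Since $\mu_0\le\mu$ we have $F_{\mu_0}\le F_\mu$, not $F_{\mu_0}\ge F_\mu$, and $\nu\in S$ is false in general because every element of $S$ has total mass $\mu_0(\R)$, which may be strictly less than $1$ (you note this mass constraint yourself a few lines later); non-emptiness of $S$ actually needs a different argument, e.g.\ pushing $\mu_0$ through a directional coupling of $\mu$ and $\nu$, although the paper also leaves this implicit. Likewise, with the paper's convention $\theta_1\lst\theta_2$ meaning $F_{\theta_1}\ge F_{\theta_2}$, the membership claim $\mu_0\lst\theta_*$ amounts to $F_*\le F_{\mu_0}$ (because each $F_\theta\le F_{\mu_0}$), not $F_*\ge F_{\mu_0}$ as you wrote; you use the correct convention in the minimality step, so this is an internal inconsistency rather than a conceptual error. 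The remaining parts---the verification $\theta_*\le\nu$ by increments, minimality, uniqueness, and the limits of $F_*$ at $\pm\infty$---are correct and essentially coincide with the paper's proof.
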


\begin{proof}
  We first show that $F:=\sup_{\theta\in S} F_{\theta}$ is a cdf. Given $x<y$, we have $F_\theta (y) -F_\theta (x) \le F_\nu(y) - F_\nu (x)$  for any $\theta \in S$ and hence 
 $$
 F (y) - F (x) \le \sup_{\theta \in S}\, [F_\theta(y)-F_\theta(x)] \le  F_\nu(y) - F_\nu (x) \to 0   \quad\mbox{as}\quad y\downarrow x,
 $$
 showing that $F$ is right-continuous. As the remaining properties of a cdf are immediate, we can introduce $\theta_{*}$ as the measure associated to $F$. In view of $F=\sup_{\theta\in S} F_{\theta}$, we have that $\mu_0\lst \theta_{*}$ and $\theta_{*} \lst \theta$ for every $\theta\in S$. It remains to see that $\theta_{*}  \le \nu$, or equivalently that $F_{\nu-\theta_{*}}$ is nondecreasing. Indeed, 
 $ 
 F_{\nu-\theta_{*}} = F_\nu - \sup_{\theta \in S} F_{\theta}  = \inf_{\theta \in S} F_{\nu-\theta},
 $ 
 and $F_{\nu-\theta}$ is nondecreasing for every $\theta\in S$.
\end{proof}

Next, we show that the map $\mu_{0}\mapsto\shadow{\mu_{0}}{\nu}$ of Lemma~\ref{le:shadow}  is ``divisible'', which is important for its iterated application: mapping $\mu_{0}=\mu_{1}+\mu_{2}$ into $\nu$ produces the same cumulative result as first mapping $\mu_{1}$ and then mapping $\mu_{2}$ into the remaining part of $\nu$.

\begin{lemma}\label{le:shadowAdditive}
  Let $\mu_{1},\mu_{2}$ satisfy $\mu_{1}+\mu_{2}\leq \mu$. Then $\mu-\mu_{1}\lst \nu-\shadow{\mu_{1}}{\nu}$
  and
  $$
    \shadow{\mu_{1}+\mu_{2}}{\nu} = \shadow{\mu_{1}}{\nu} + \shadow{\mu_{2}}{\nu-\shadow{\mu_{1}}{\nu}}.
  $$
\end{lemma}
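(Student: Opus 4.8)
The plan is to first upgrade Lemma~\ref{le:shadow} to an \emph{explicit formula} for the cdf of a shadow, after which both assertions of the lemma drop out by elementary manipulations. Concretely, I would prove that for any finite measures $\sigma\lst\tau$ and any $\sigma_0\le\sigma$ (so that Lemma~\ref{le:shadow} applies with $(\sigma,\tau)$ in the role of $(\mu,\nu)$),
\[
  F_{\shadow{\sigma_0}{\tau}}(t)=\inf_{s\le t}\big(F_{\sigma_0}(s)+F_\tau(t)-F_\tau(s)\big),\qquad t\in\R .
\]
The inequality ``$\le$'' is immediate: if $\sigma_0\lst\theta\le\tau$ and $s\le t$, then $F_\theta(t)=F_\theta(s)+\theta((s,t])\le F_{\sigma_0}(s)+\tau((s,t])$, so taking the infimum over $s$ and then the supremum over such $\theta$ (which equals $F_{\shadow{\sigma_0}{\tau}}$ by Lemma~\ref{le:shadow}) gives the bound. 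For ``$\ge$'' I would check that the right-hand side, call it $\phi$, is itself the cdf of a member of $\{\theta:\sigma_0\lst\theta\le\tau\}$, whence Lemma~\ref{le:shadow} forces $F_{\shadow{\sigma_0}{\tau}}\ge\phi$: indeed $\phi$ is nondecreasing and right-continuous, $\phi(-\infty)=0$, $\phi\le F_{\sigma_0}$ (take $s=t$), and $F_\tau-\phi=\sup_{s\le t}(F_\tau(s)-F_{\sigma_0}(s))$ is nondecreasing so that $\tau$ minus this measure is nonnegative; finally $\phi(\infty)=\tau(\R)-\sup_s(F_\tau(s)-F_{\sigma_0}(s))=\sigma_0(\R)$, which is exactly where $\sigma\lst\tau$ enters, via $F_\tau(s)-F_{\sigma_0}(s)\le F_\sigma(s)-F_{\sigma_0}(s)=F_{\sigma-\sigma_0}(s)\le(\sigma-\sigma_0)(\R)=\tau(\R)-\sigma_0(\R)$ with equality as $s\to\infty$.

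Granting this formula, the first assertion is quick. Writing $\theta_1:=\shadow{\mu_1}{\nu}$ and applying the formula with $(\sigma,\sigma_0)=(\mu,\mu_1)$, together with $F_\nu\le F_\mu$ and the monotonicity of $F_{\mu-\mu_1}$,
\[
  F_{\theta_1}(t)=F_\nu(t)-\sup_{s\le t}\big(F_\nu(s)-F_{\mu_1}(s)\big)\ \ge\ F_\nu(t)-\sup_{s\le t}F_{\mu-\mu_1}(s)=F_\nu(t)-F_{\mu-\mu_1}(t),
\]
so $F_{\nu-\theta_1}=F_\nu-F_{\theta_1}\le F_{\mu-\mu_1}$; since both are cdfs of measures of common mass $\mu(\R)-\mu_1(\R)$, this is precisely $\mu-\mu_1\lst\nu-\theta_1$. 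In particular $\mu_2\le\mu-\mu_1\lst\nu-\theta_1$, so Lemma~\ref{le:shadow} (with $(\mu-\mu_1,\nu-\theta_1)$ in the role of $(\mu,\nu)$) makes $\theta_2:=\shadow{\mu_2}{\nu-\theta_1}$ well-defined, and the formula above applies to it with $\sigma=\mu-\mu_1$ and target $\tau=\nu-\theta_1$, whose cdf is $F_\nu-F_{\theta_1}$.

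For the additivity identity, set $\Theta:=\shadow{\mu_1+\mu_2}{\nu}$. I would substitute the formula for $F_{\theta_2}(t)$, then substitute the formula for $F_{\theta_1}(s)$ inside it, and telescope the $\pm F_{\theta_1}$ and $\pm F_\nu(s)$ contributions to obtain
\[
  F_{\theta_1}(t)+F_{\theta_2}(t)=\inf_{r\le s\le t}\big(F_{\mu_1}(r)+F_{\mu_2}(s)+F_\nu(t)-F_\nu(r)\big),
\]
while the formula applied directly to $\mu_1+\mu_2$ gives $F_\Theta(t)=\inf_{u\le t}\big(F_{\mu_1}(u)+F_{\mu_2}(u)+F_\nu(t)-F_\nu(u)\big)$. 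Restricting the double infimum to the diagonal $r=s$ yields $F_{\theta_1+\theta_2}\le F_\Theta$; conversely, for $r\le s\le t$ the summand $F_{\mu_1}(r)+F_{\mu_2}(s)+F_\nu(t)-F_\nu(r)$ is at least $F_{\mu_1}(r)+F_{\mu_2}(r)+F_\nu(t)-F_\nu(r)$ because $F_{\mu_2}$ is nondecreasing, and the latter is $\ge F_\Theta(t)$ by taking $u=r$; hence $F_{\theta_1+\theta_2}\ge F_\Theta$. Equality of cdfs then gives $\shadow{\mu_1+\mu_2}{\nu}=\theta_1+\theta_2=\shadow{\mu_1}{\nu}+\shadow{\mu_2}{\nu-\shadow{\mu_1}{\nu}}$.

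I expect the only genuine work to lie in establishing the shadow formula — namely verifying that $\phi$ is a bona fide cdf with the correct total mass, the single point where the standing hypothesis $\mu\lst\nu$ is truly indispensable. After that, both stated conclusions are purely algebraic. A more self-contained route to the additivity identity would avoid the formula by splitting an arbitrary competitor $\rho$ with $\mu_1+\mu_2\lst\rho\le\nu$ into a part stochastically above $\mu_1$ and a remainder stochastically above $\mu_2$ (then invoking minimality twice); but keeping such a split nondecreasing seems to require essentially the same analysis, so the formula route appears cleaner.
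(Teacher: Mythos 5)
Your proposal is correct, but it follows a genuinely different route from the paper's. You first derive a closed-form expression for the shadow, $F_{\shadow{\sigma_0}{\tau}}(t)=F_\tau(t)-\sup_{s\le t}\bigl(F_\tau(s)-F_{\sigma_0}(s)\bigr)$, and then obtain both assertions by purely algebraic manipulations (telescoping the nested infima and comparing with the diagonal); I checked the key points — the mass computation where $\sigma\lst\tau$ genuinely enters, the deduction $\mu-\mu_1\lst\nu-\shadow{\mu_{1}}{\nu}$, and the two-sided comparison of $F_{\theta_1+\theta_2}$ with $F_{\shadow{\mu_{1}+\mu_{2}}{\nu}}$ — and they all go through. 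The paper instead argues order-theoretically from the minimality property of Lemma~\ref{le:shadow}: the first assertion comes from comparing $\shadow{\mu_{1}}{\nu}$ with the image of $\mu_1$ under an arbitrary $Q\in\cD(\mu,\nu)$ (so the standing assumption enters through nonemptiness of $\cD(\mu,\nu)$); one direction of the identity follows because the right-hand side is an admissible competitor for $\shadow{\mu_{1}+\mu_{2}}{\nu}$, and the reverse direction uses $\shadow{\mu_{1}}{\nu}\le\shadow{\mu_{1}+\mu_{2}}{\nu}$ (a step the paper omits ``for brevity'') to get $\mu_2\lst\shadow{\mu_{1}+\mu_{2}}{\nu}-\shadow{\mu_{1}}{\nu}$ and then invokes minimality of $\shadow{\mu_{2}}{\nu-\shadow{\mu_{1}}{\nu}}$. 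Your route is more computational but fully explicit: it supplies the verification the paper leaves out, and the shadow formula is a worthwhile byproduct — applied to $\mu|_{(x,\infty)}$ it immediately reproduces the cdf formula of Corollary~\ref{co:cdf}, for which the paper gives a separate argument. What the paper's approach buys is brevity and the coupling/minimality viewpoint that is reused later (e.g.\ in the proof of Theorem~\ref{th:main}). The only polish your write-up needs is to spell out the routine checks that your function $\phi$ is nondecreasing and right-continuous, which follow by the same kind of estimate as in the paper's proof of Lemma~\ref{le:shadow}.
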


\begin{proof}
  Let $Q=\mu\otimes \kappa \in \cD(\mu,\nu)$ be arbitrary and let $Q(\mu_{1})$ be its image of~$\mu_{1}$ (that is, the second marginal of $\mu_{1}\otimes\kappa$). In view of $Q\in \cD(\mu,\nu)$ we have $\mu_1\lst Q(\mu_{1})\leq \nu$ and $\mu-\mu_1\lst \nu-Q(\mu_{1})$.
The minimality property of $\shadow{\mu_{1}}{\nu}$ then yields  
$\shadow{\mu_{1}}{\nu} \lst Q(\mu_{1})$ and therefore 
$$\mu_{2}\leq \mu-\mu_{1}\lst \nu-Q(\mu_{1})\lst  \nu-\shadow{\mu_{1}}{\nu}.$$ In particular, the measure $\shadow{\mu_{2}}{\nu-\shadow{\mu_{1}}{\nu}}$ is well defined, and its definition entails $\shadow{\mu_{1}}{\nu} + \shadow{\mu_{2}}{\nu-\shadow{\mu_{1}}{\nu}}\leq \nu$. 
The minimality property of $\shadow{\mu_{1}+\mu_{2}}{\nu}$ now shows that
  \begin{equation}\label{eq:proofshadowAdditive}
    \shadow{\mu_{1}+\mu_{2}}{\nu} \lst \shadow{\mu_{1}}{\nu} + \shadow{\mu_{2}}{\nu-\shadow{\mu_{1}}{\nu}}.
  \end{equation}
  On the other hand, the minimality properties of $\shadow{\mu_{1}}{\nu}$ and $\shadow{\mu_{1}+\mu_{2}}{\nu}$ and direct arguments (omitted for brevity) imply that $\shadow{\mu_{1}}{\nu} \leq \shadow{\mu_{1}+\mu_{2}}{\nu}$. The minimality property of $\shadow{\mu_{1}}{\nu}$ then states in particular that $\shadow{\mu_{1}}{\nu}$ is minimal in stochastic order among all sub-measures of $\shadow{\mu_{1}+\mu_{2}}{\nu}$ with mass $\mu_{1}(\R)$. As a consequence, we see that $$\mu_{2} \lst \shadow{\mu_{1}+\mu_{2}}{\nu} - \shadow{\mu_{1}}{\nu}.$$ Clearly also $\shadow{\mu_{1}+\mu_{2}}{\nu} - \shadow{\mu_{1}}{\nu}\leq \nu - \shadow{\mu_{1}}{\nu}$, and so the minimality property of $\shadow{\mu_{2}}{\nu-\shadow{\mu_{1}}{\nu}}$ implies
  $$
    \shadow{\mu_{2}}{\nu-\shadow{\mu_{1}}{\nu}} \lst \shadow{\mu_{1}+\mu_{2}}{\nu} - \shadow{\mu_{1}}{\nu}.
  $$
  In view of~\eqref{eq:proofshadowAdditive}, the claim follows. 
\end{proof}

We can now construct $P_{*}$.

\begin{proof}[Proof of Theorem \ref{th:existenceAndMinimality}]
  Noting that $\nu-\shadow{\mu|_{(x,\infty)}}{\nu}$ is a nonnegative measure for fixed~$x$, the function
  $$
    F(x,y):=\big(\nu-\shadow{\mu|_{(x,\infty)}}{\nu}\big)(-\infty,y]
  $$
  is clearly nondecreasing and right-continuous in $y$. Moreover, 
  Lemma~\ref{le:shadowAdditive} implies that 
  \begin{equation}\label{eq:checkF1}
   \shadow{\mu|_{(x_{1},\infty)}}{\nu} - \shadow{\mu|_{(x_{2},\infty)}}{\nu} = \shadow{\mu|_{(x_{1},x_{2}]}}{\nu-\shadow{\mu|_{(x_{2},\infty)}}{\nu}}\geq0, \quad x_{1}\leq x_{2}.
  \end{equation}
  The total mass of the right-hand side equals $\mu(x_{1},x_{2}]$ and thus converges to zero as $x_{2}\downarrow x_{1}$, showing that $x\mapsto F(x,y)$ is right-continuous.
  Relation~\eqref{eq:checkF1} also implies that $F$ is supermodular (or nondecreasing on $\R^{2}$): for $x_1\le x_2$ and $y_1\le y_2$,
    \begin{align*}
 [F(x_2,y_2)- F(x_2, & y_1)] - [F(x_1,y_2)-  F(x_1,y_1)]
\\ &  =
   \shadow{\mu|_{(x_{1},\infty)}}{\nu} (y_1,y_2]- \shadow{\mu|_{(x_{2},\infty)}}{\nu} (y_1,y_2]
    \\&  = \shadow{\mu|_{(x_{1},x_{2}]}}{\nu-\shadow{\mu|_{(x_{2},\infty)}}{\nu}}(y_1,y_2] \geq0.
  \end{align*}
  As $F$  has the proper normalization, we conclude (e.g., \cite[p.\,27]{Joe.15}) that $F$ induces a unique probability measure $P_{*}$ on $\cB(\R^{2})$. 
  It remains to observe that $P_{*}\in\cD(\mu,\nu)$. Indeed, the second marginal of~$P_{*}$ is clearly $\nu$. The first marginal is equal to $\mu$ as   for each $x$, $$\lim_{y\to\infty} F(x,y)=\nu(\R)-\theta^{\nu}(\mu|_{(x,\infty)})(\R)=1-\mu((x,\infty))=\mu((-\infty,x]).$$ Finally, $P_{*}$ is directional since
  $$
    P_{*}\big((x,\infty)\times (-\infty,x]\big) = \shadow{\mu|_{(x,\infty)}}{\nu}(-\infty,x]=0,\quad x\in\R
  $$
  due to the fact that $\mu|_{(x,\infty)}\lst\shadow{\mu|_{(x,\infty)}}{\nu}$ by the definition of $\shadow{\cdot}{\nu}$.
\end{proof}

\begin{remark}\label{rk:minimalOpen}
  While we have defined $P_{*}$ as mapping $\mu|_{(x,\infty)}$ to $\shadow{\mu|_{(x,\infty)}}{\nu}$, it equivalently maps $\mu|_{[x,\infty)}$ to $\shadow{\mu|_{[x,\infty)}}{\nu}$ for all $x\in\R$. This follows from Theorem~\ref{th:existenceAndMinimality} and Lemma~\ref{le:shadowAdditive}.
\end{remark}

We now turn the the equivalent characterizations in Theorem~\ref{th:main}; here the most important tool is the notion of cyclical monotonicity in optimal transport (e.g., \cite{GangboMcCann.96, Villani.09}).

\begin{proof}[Proof of Theorem \ref{th:main}]
   Given two probability measures $P,Q$ on $\R^{2}$ with the same marginals, it is known that the  concordance order $F_{P}\leq F_{Q}$ is equivalent to $\int g   \,dP \geq \int g   \,dQ$ for all (suitably integrable) supermodular $g$; cf.\ \cite[Theorem~3.8.2, p.\,108]{MullerStoyan.02}. The implication (i)$\Rightarrow$(ii)  is a direct consequence of that fact, and (ii)$\Rightarrow$(iii) is trivial. 
  
  (iii)$\Rightarrow$(iv): Let $g$ be Borel and $(\mu,\nu)$-integrable. We consider the (unconstrained) Monge--Kantorovich optimal transport problem on $\R\times\R$ with marginals $(\mu,\nu)$ and cost function
  $$
  c(x,y)=
  \begin{cases}
  -g(x,y), & (x,y)\in \H,\\
  \infty, & \mbox{otherwise.}
  \end{cases} 
  $$
  Noting that $c(x,y)\geq \phi(x)+\psi(y)$ for some $\phi\in L^{1}(\mu)$ and $\psi\in L^{1}(\nu)$, it follows from \cite[Theorem~1(a)]{BeiglbockGoldsternMareschSchachermayer.09} that any optimal transport $P$ is concentrated on a Borel set $\Gamma\subseteq \R^{2}$ that is $c$-cyclically monotone.
   As no transport with finite cost charges the complement $\H^{c}$, we may replace $\Gamma$ with $\Gamma\cap \H$ to ensure that $\Gamma\subseteq \H$. Cyclical monotonicity then states in particular that\footnote{More generally, the monotonicity holds for cycles of finite length $n$; that is, $\sum_{i=1}^{n} g(x_{i},y_{i}) \geq \sum_{i=1}^{n} g(x_{i},y_{\pi(i)})$ for all $(x_{1},y_{1}),\dots, (x_{n},y_{n}) \in \Gamma$ and permutations $\pi$ of $\{1,\dots,n\}$. The stated property corresponds to $n=2$.}
  $$
    g(x,y) + g(x',y') \geq g(x,y') + g(x',y)\quad\mbox{for all}\quad (x,y),(x',y')\in \Gamma.
  $$
  Thus, if $g$ is strictly submodular, $\Gamma$ cannot contain improvable pairs.

  (iv)$\Rightarrow$(v): Suppose for contradiction that $P\ne P_{*}$. In view of Theorem~\ref{th:existenceAndMinimality}, there exists $x\in \R$ such that $P$ maps $\mu |_{(x,\infty)}$ to a measure $\nu_x'\neq\nu_{x}$, and $\nu_{x} \lst \nu_{x}'$ by the minimality property of $\nu_x$. It follows from Lemma~\ref{le:new1} below that there exist  $z>y\geq x$ such that
  \begin{equation}\label{eq:proofCyclOpt}
    \nu_x( (x,y] )> \nu_x' ( (x,z)) \quad\mbox{and}\quad  \nu_x ([y,z)) > \nu_x'([y,z)).
  \end{equation}
  Using also that $\mu((x,y])\geq \nu_{x}((x,y])$ due to $\mu |_{(x,\infty)}\lst \nu_{x}$, we deduce  \begin{align*}
  P((x, y] \times [z, \infty))  
    \geq  \mu((x,y]) - \nu_x' ((x, z))
    \geq \nu_x( (x,y] ) - \nu_x' ((x, z))  > 0.
\end{align*}
    By the constrained crossing property, this implies $ P((-\infty,x]) \times [y,z) )  = 0$ and thus
  \begin{align*} 
    \nu(  [y,z)  ) &=  P( \R \times [y,z) )  
      = P((x,\infty) \times [y,z) ) =\nu_x' ([y, z)),
    \end{align*}
  contradicting~\eqref{eq:proofCyclOpt}.

 (v)$\Rightarrow$(i): Let $x,y\in \R$; we show $F_{P_{*}}(x,y)\le F_{Q}(x,y)$ for $Q\in \cD(\mu,\nu)$. As $P_{*}$ and $Q$ have the same second marginal, this is equivalent to
  \begin{equation*}%
    P_{*}((x,\infty)\times (-\infty,y])  \geq Q  ((x,\infty)\times (-\infty,y]).
  \end{equation*}  
  Recalling $\nu_{x}$ from Theorem \ref{th:existenceAndMinimality} and denoting by $\theta$ the measure that $\mu |_{(x,\infty)}$ is transported to by $Q$, the above can be stated as $\nu_{x}((-\infty,y])\geq \theta((-\infty,y])$, and that clearly follows from the formula for $F_{\nu_{x}}$ in Theorem \ref{th:existenceAndMinimality}.
\end{proof}

The following was used in the preceding proof of (iv)$\Rightarrow$(v).

 \begin{lemma}\label{le:new1}
 Given $\mu_1,\mu_2\in \cM$ with $\mu_1\lst \mu_2$ and $\mu_1\ne \mu_2$,
 there exist $z>y $ such that
\begin{equation} \label{eq:existence} \mu_1( (-\infty,y] )> \mu_2 ( (-\infty,z)) 
 \mbox{~~~and~~~} 
  \mu_1 ([y ,z)) > \mu_2 ([y ,z)) .\end{equation}
 \end{lemma}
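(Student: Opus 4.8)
The plan is to extract the two inequalities from the hypothesis $\mu_1 \lst \mu_2$, $\mu_1 \neq \mu_2$ by locating a point where the cdf's differ and then choosing $y, z$ around it. Since $\mu_1 \lst \mu_2$ means $F_{\mu_1} \geq F_{\mu_2}$ everywhere, and $\mu_1 \neq \mu_2$ with $\mu_1(\R) = \mu_2(\R)$ forces strict inequality $F_{\mu_1}(u) > F_{\mu_2}(u)$ at some point $u$. First I would fix such a $u$ and set $\varepsilon := F_{\mu_1}(u) - F_{\mu_2}(u) > 0$. The idea is to take $z$ slightly to the right of $u$ (or $z = u$, up to right-continuity bookkeeping) so that $F_{\mu_2}(z-) = \mu_2((-\infty,z))$ is still essentially $F_{\mu_2}(u)$, and to take $y \leq u$ with $y$ chosen so that $F_{\mu_1}(y)$ has not yet dropped much below $F_{\mu_1}(u)$; then $\mu_1((-\infty,y]) = F_{\mu_1}(y)$ will exceed $\mu_2((-\infty,z)) = F_{\mu_2}(z-)$, giving the first inequality in \eqref{eq:existence}.

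For the first inequality I would argue as follows. Because $F_{\mu_1}$ is right-continuous and nondecreasing, and $F_{\mu_1}(u) = F_{\mu_1}(u)$, I can pick $y \le u$ such that $F_{\mu_1}(y) > F_{\mu_2}(u) = F_{\mu_2}(u)$; indeed, it suffices to take $y$ close enough to $u$ from the left (using left-continuity of the sup of the jump, or simply $y = u$ works if there is no awkward jump — but to be safe one picks $y$ with $F_{\mu_1}(y) > F_{\mu_1}(u) - \varepsilon/2 > F_{\mu_2}(u)$, possible since $F_{\mu_1}(u-) \le F_{\mu_1}(u)$ may still be close; if $F_{\mu_1}$ jumps exactly at $u$ one uses $y = u$ directly). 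Similarly choose $z > u$ with $F_{\mu_2}(z-) < F_{\mu_2}(u) + \varepsilon/2 < F_{\mu_1}(u)$, possible by right-continuity of $F_{\mu_2}$ at $u$. Then $\mu_1((-\infty,y]) = F_{\mu_1}(y) > F_{\mu_2}(u) + \text{(slack)} \ge F_{\mu_2}(z-) = \mu_2((-\infty,z))$ after matching the slacks correctly, and $y \le u < z$ so $z > y$ as required.

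The second inequality $\mu_1([y,z)) > \mu_2([y,z))$ then follows by subtracting cdf values: $\mu_i([y,z)) = F_{\mu_i}(z-) - F_{\mu_i}(y-)$. We have $F_{\mu_1}(y-) \le F_{\mu_1}(y)$ and we want $F_{\mu_1}(z-) - F_{\mu_1}(y-) > F_{\mu_2}(z-) - F_{\mu_2}(y-)$. Using $F_{\mu_1} \ge F_{\mu_2}$ pointwise (so $F_{\mu_1}(z-) \ge F_{\mu_2}(z-)$) together with a good choice making $F_{\mu_1}(y-) < F_{\mu_2}(y-)$ would do it, but that reverse inequality need not hold at an arbitrary $y$; instead the cleaner route is to rephrase using masses on $[y,\infty)$: since the total masses agree, $\mu_i([y,\infty)) = \mu_i(\R) - F_{\mu_i}(y-)$, and one wants to compare $\mu_1((-\infty, z)) $ versus $\mu_2((-\infty,z))$ and $\mu_1([z,\infty))$ versus $\mu_2([z,\infty))$. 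I anticipate the main obstacle is precisely this right-continuity/left-limit bookkeeping around atoms: one must choose $y$ and $z$ so that both strict inequalities hold simultaneously, and the cleanest way is probably to first reduce to the case where $u$ is a point of strict separation that is approached from the left by points of strict separation (which exists since the set $\{F_{\mu_1} > F_{\mu_2}\}$ is a nonempty union of intervals), pick $y$ inside that separating interval and $z = $ its right endpoint (or just beyond), so that on $[y,z)$ the cdf $F_{\mu_1}$ stays strictly above $F_{\mu_2}$ while $F_{\mu_1}(y-) $ can be taken equal to $F_{\mu_2}(y-)$ or the gap at $y-$ is controlled. Once the right $y,z$ are in hand, both inequalities in \eqref{eq:existence} are immediate from monotonicity of $F_{\mu_1} - F_{\mu_2}$ on the chosen interval.
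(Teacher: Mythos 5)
You have the right reformulation---writing $\phi^{+}(t)=\mu_1((-\infty,t])-\mu_2((-\infty,t])$ and $\phi^{-}(t)=\mu_1((-\infty,t))-\mu_2((-\infty,t))$, the two inequalities in \eqref{eq:existence} amount to $F_{\mu_1}(y)>F_{\mu_2}(z-)$ and $\phi^{-}(z)>\phi^{-}(y)$---and your treatment of the first inequality on its own is fine. The gap is in the simultaneous choice of $y$ and $z$. Your concrete prescription (take $y$ inside a maximal separation interval and $z$ at, or just beyond, its right endpoint, then conclude by ``monotonicity of $F_{\mu_1}-F_{\mu_2}$ on the chosen interval'') fails already in simple cases: for $\mu_1=\Unif[0,1]$ and $\mu_2=\Unif[1,2]$ the separation set is $(0,2)$, and with $z\geq 2$ one has $\mu_2((-\infty,z))=1\geq F_{\mu_1}(y)$ and $\mu_1([y,z))\leq\mu_2([y,z))$ for every $y$, so both inequalities fail; the working choice is $y<z\leq 1$, i.e.\ both points inside a stretch where $F_{\mu_1}-F_{\mu_2}$ increases, not $z$ at the end of the separation interval. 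More seriously, the fallback you invoke is not available in general: $F_{\mu_1}-F_{\mu_2}$ is merely of bounded variation and need not be monotone on \emph{any} interval (cf.\ the remark preceding Proposition~\ref{pr:HahnDecomposition}), so ``pick $y$ in a rising stretch'' has no meaning there, and ``the gap at $y-$ is controlled'' is precisely what has to be proved. The crux of the lemma, which your plan does not resolve, is to produce a point $y$ with $\phi^{+}(y)>0$ near which $\phi^{-}$ takes strictly larger values arbitrarily close to the right of $y$; the main obstacle is this localization, not the atom bookkeeping you anticipate.

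The paper's proof supplies exactly this mechanism: it notes that any $y$ with $\phi^{+}(y)>0$ and $\sup_{z\in(y,y+\eps)}\phi^{-}(z)>\phi^{-}(y)$ for every $\eps>0$ yields \eqref{eq:existence} by taking $z$ close to $y$, and then argues by contradiction that such a $y$ exists: if not, $\phi^{-}$ has no upward jumps, and the point $y=\inf\{z>y_1:\,\phi^{-}(z)>\phi^{-}(y_1)\}$, for a suitable $y_1$ with $0<\phi^{-}(y_1)<\phi^{-}(y_0)$, is shown to satisfy both conditions after all. Without an argument of this kind (or some substitute for the missing monotonicity), your proposal is incomplete.
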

  
\begin{proof}
  Define two real functions
  $$
    \phi^+(y) = \mu_1( (-\infty,y] ) -\mu_2 ( (-\infty,y]), ~~ \phi^-(y) = \mu_1( (-\infty,y) ) -\mu_2 ((-\infty,y)).
  $$
  Then $\phi^+ $ and $\phi^{-}$ are right- and left-continuous,  respectively, both are nonnegative, and $\phi^+(y)=\phi^-(y)$ whenever $\mu_1(\{y\})=\mu_2(\{y\})$. If $y\in \R$ satisfies 
   \begin{equation}\label{eq:yz1} 
  \phi^+(y)>0\mbox{ ~~~ and ~~ }
  \sup_{z\in (y,y+\eps)} \phi^-(z) >\phi^-(y) \mbox{~for each $\eps>0$,}
  \end{equation}
   then \eqref{eq:existence} holds by choosing $z>y$ close enough to $y$. We argue by contradiction and suppose that there is no $y\in \R$ satisfying~\eqref{eq:yz1}.
   Thus, if $\phi^+(y)>0$, there exists $\eps>0$ such that $\phi^{-}(z)\le \phi^{-}(y)$ for $z\in (y,y+\eps)$. This implies that the function $\phi^{-}$ has no upward jumps; i.e.,  $\Delta \phi^{-}\leq0$.  As $\mu_1\ne\mu_2$, there exists $y_0\in \R$ such that $\phi^{-}(y_0)>0$. Since $\phi^{-}(y)\to 0$ as $y\downarrow -\infty$ and there are no upward jumps, 
  there exists $y_1<y_{0}$ such that $0<\phi^{- }(y_1)<\phi^{-}(y_0)$. 
   Let $y = \inf\{z>y_1: \phi^{-}(z)>\phi^{-}(y_1) \} $. 
  Then the left-continuity of $\phi^{-}$
  implies $y <y_{0}$ and the absence of upward jumps implies 
  $\phi^{-}(y)=\phi^{-}(y_1)$ as well as that $y$ cannot be the location of a downward jump. Therefore, $\mu_1(\{y\})=\mu_2(\{y\})$ and $\phi^{+}(y)=\phi^{-}(y)=\phi^{-}(y_1)>0$. Finally, given $\eps>0$, we have $\phi^-(z) > \phi^{-}(y_1) = \phi^- (y)$ for some $z\in (y,y+\eps)$ by the definition of~$y$, so that $y$ satisfies~\eqref{eq:yz1} and we have reached a contradiction.
\end{proof}

\begin{remark}\label{rk:integrability}
  The integrability condition in Theorem~\ref{th:main} can we weakened to the positive part $g^{+}$ being $(\mu,\nu)$-integrable and the negative part satisfying $\int g^{-} \, dP<\infty$ for some $P\in\cD$, so that the value function is not trivial. %
\end{remark}

The final task of this section is to deduce the decomposition in Proposition~\ref{pr:decompose}
from Theorem~\ref{th:main}.

\begin{proof}[Proof of Proposition~\ref{pr:decompose}] 
  By Theorem~\ref{th:main}, the optimal coupling $P_{*}(\mu',\nu')$ of $\mu',\nu'$ is supported by a set $\Gamma'$ with the constrained crossing property. Define $\Gamma=\Gamma'\cup \Delta$ where $\Delta=\{(x,x):\,x\in\R\}$ is the diagonal in $\R^{2}$, then $\Gamma$ again has the constrained crossing property. Set $P=\id(\mu\wedge\nu) +P_{*}(\mu',\nu')$ and note  $P\in\cD(\mu,\nu)$. As $\Delta$ supports the identical coupling, $P$ is supported by $\Gamma$ and (iv)$\Rightarrow$(v) of Theorem~\ref{th:main} shows that $P=P_{*}(\mu,\nu)$.
\end{proof}

\section{Joint Distribution Function}\label{se:cdf}

As mentioned in Section~\ref{se:mainResults}, the formula for the joint distribution function~$F_*$ of~$P_*$ in Corollary~\ref{co:cdf} can be deduced from Theorem~\ref{th:main}\,(i) and \cite[Theorem~6]{ArnoldMolchanovZiegel.20} which uses arguments from copula theory. Below, we sketch a direct derivation and some consequences.

\begin{proof}[Proof of Corollary~\ref{co:cdf}]
As $P_{*}$ is directional, $y \le x$ implies 
$$
 F_{*}(x,y) =P_{*}((-\infty,x]\times (-\infty,y]) =\nu(  (-\infty,y]) = F_{\nu}(y),
$$
so we can focus on $y>x$. Denote $c=\inf_{z\in [x,y]} (F_{\mu}(z)-F_{\nu}(z))$ and recall that $X,Y$ are the coordinate projections. We first consider an arbitrary $P\in \cD(\mu,\nu)$. Then as $X\le Y$ $P$-a.s., we have for $z\in [x,y]$ that
\begin{align*}
P(X\le z,X>x)& \ge P(Y\le z, X>x)
\\&=P(Y\le z) -P(Y\le z, X\le x) 
\\&= P(Y\le z) -P(X\le x) +P(Y> z, X\le x) 
\\&\ge P(Y\le z) -P(X\le x) +P(Y> y, X\le x);
\end{align*} 
that is,  
$
F_{\mu}(z)-F_{\mu}(x) \ge F_{\nu}(z) -F_{\mu}(x) +P(Y> y, X\le x). 
$
This shows 
$P(Y> y, X\le x) \le \inf_{z\in [x,y]}(F_{\mu}(z)-F_{\nu}(z))=c$
and we conclude that
\begin{align}
 F_{P}(x,y)
&=P(X\le x) -
P(Y>y,X\le x) 
\ge F_{\mu}(x) -c. \label{eq:cdf-eq2}
\end{align}
In view of Theorem~\ref{th:main} we have $F_{*}(x,y)=\inf_{P\in\cD(\mu,\nu)} F_{P}(x,y)$. Thus, to complete the proof, it suffices to show that some $P\in\cD(\mu,\nu)$ attains equality in the above inequality.

Let $a=F_{\mu}(x)$ and $b=F_{\nu}(y)$; note that
 $0\leq c\le a\le b+c \le 1.$  
Let $U \sim\Unif [0,1]$ and define  a random variable $V$ as 
$$
V=\begin{cases}
U+b+c -a, &  a-c < U \le a,\\
U-c, & a< U \le b+c,
\\
U,& \mbox{otherwise}.
\end{cases}
$$
Then $V\sim\Unif [0,1]$ like $U$, and thus
$
  P := \Law (F^{-1}_\mu (U), F^{-1}_{\nu}(V))
$
has marginals $\mu$ and $\nu$, respectively. One checks by direct arguments that~$P$ is directional.
Finally, if $U\in (a-c,a]$, 
then $F^{-1}_\mu (U) \le F^{-1}_\mu(a) \le x$ 
and $F^{-1}_{\nu}(V) \ge F^{-1}_{\nu}(b+)$, so that
$P(X\le x, Y>y) \ge P(U\in (a-c,a]) =c$. This shows that $P$ attains equality in~\eqref{eq:cdf-eq2}.
\end{proof}

\begin{remark}\label{rk: }
  One can give a yet another proof of Corollary~\ref{co:cdf} based on Theorem~\ref{th:main-transport} below, as may be intuitive given Figure~\ref{fi:cdfandT}. %
\end{remark}

Corollary~\ref{co:cdf} implies that $P_*$ is continuous with respect to the marginals as stated in Corollary~\ref{co:continuityWrtMarginals}. The next example shows that this assertion may fail if the limiting marginals have atoms, a phenomenon caused by the directional constraint.

\begin{example}[Discontinuity wrt.\ Marginals] \label{ex:non-convergence}
For $n\in \N$, let $\mu_n$ and $\nu_n$ be such that $\mu_n\{0\}=\mu_n\{1\} =1/2$ and $\nu_n\{1-1/n\}=\nu_n\{2\} =1/2$. 
Then $\mu_n\lst \nu_n$ and $\nu_n\weakto \nu$ with $\nu\{1\}=\nu\{2\}=1/2$, and  $\mu_{n}\equiv \mu$ is constant.
We see that 
$P_*(\mu_n,\nu_n)$ is the comonotone coupling, 
$P_*(\mu,\nu)$ is the antitone coupling, and $P_*(\mu_n,\nu_n)  \notweakto P_*(\mu,\nu)$.
\end{example}

Another consequence are simple bounds on $F_{*}$. A right-continuous function on~$\R$ is \emph{unimodal} if it is nondecreasing on $(-\infty,x_{0})$ and nonincreasing on $[x_{0},\infty)$ for some $x_{0}\in\R$.

\begin{corollary}\label{coro:bounds} 
We have $H^{\wedge} \le F_{*} \le H^{\vee}$ for
\begin{align*}
 H^{\wedge} (x,y) &= F_\nu(y) - [(F_\mu(y)-F_{\mu}(x))\wedge (F_\nu(y)-F_{\nu}(x))]_+,\\
 H^{\vee}(x,y)&=F_{\mu}(x)\wedge F_{\nu}(y).
\end{align*}
 \begin{enumerate} 
\item  $F_{*}=H^{\wedge}$ if and only if $F=F_\mu-F_{\nu}$ is unimodal. 
\item $F_{*} = H^{\vee}$ if and only if $\cD(\mu,\nu)$ is a singleton. If, in addition, $F$ is continuous, these conditions are further equivalent to $\mu=\nu$.
\end{enumerate}  
\end{corollary}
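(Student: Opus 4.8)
The plan is to establish the two displayed inequalities first and then characterize the equality cases.

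\textbf{The two bounds.} For the upper bound $F_{*}\le H^{\vee}$, note that $F_{*}(x,y)=F_{P_{*}}(x,y)=P_{*}(X\le x,Y\le y)$ is at most $P_{*}(X\le x)=F_{\mu}(x)$ and at most $P_{*}(Y\le y)=F_{\nu}(y)$, so $F_{*}\le F_{\mu}(x)\wedge F_{\nu}(y)=H^{\vee}(x,y)$; this holds for \emph{any} coupling and needs nothing special. For the lower bound, I would use Corollary~\ref{co:cdf}. When $y\le x$ both sides equal $F_{\nu}(y)$ (for $H^\wedge$, the bracket is $[(F_\mu(y)-F_\mu(x))\wedge(F_\nu(y)-F_\nu(x))]_+$ with $y\le x$, hence the two differences are $\le 0$ and the positive part vanishes), so assume $y>x$. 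Then Corollary~\ref{co:cdf} gives $F_{*}(x,y)=F_{\mu}(x)-\inf_{z\in[x,y]}F(z)$ where $F=F_{\mu}-F_{\nu}$. Taking $z=y$ and $z=x$ in the infimum yields $\inf_{z\in[x,y]}F(z)\le F(y)=F_{\mu}(y)-F_{\nu}(y)$ and $\le F(x)=F_{\mu}(x)-F_{\nu}(x)$, so
$$
F_{*}(x,y)\ge F_{\mu}(x)-\big[(F_{\mu}(y)-F_{\nu}(y))\wedge(F_{\mu}(x)-F_{\nu}(x))\big].
$$
Rewriting the right side, $F_{\mu}(x)-(F_{\mu}(y)-F_{\nu}(y))=F_{\nu}(y)-(F_{\mu}(y)-F_{\mu}(x))$ and $F_{\mu}(x)-(F_{\mu}(x)-F_{\nu}(x))=F_{\nu}(x)=F_{\nu}(y)-(F_{\nu}(y)-F_{\nu}(x))$, so the bound equals $F_{\nu}(y)-\big[(F_{\mu}(y)-F_{\mu}(x))\wedge(F_{\nu}(y)-F_{\nu}(x))\big]$. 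Since also $F_{*}(x,y)\le F_{\nu}(y)$, the quantity in brackets is automatically nonnegative here, so we may insert the positive part freely and obtain $F_{*}\ge H^{\wedge}$.

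\textbf{Part (i).} From the displayed computation, equality $F_{*}(x,y)=H^{\wedge}(x,y)$ for a given pair with $y>x$ is equivalent to $\inf_{z\in[x,y]}F(z)=F(x)\wedge F(y)$, i.e.\ the infimum of $F$ over $[x,y]$ is attained at an endpoint. Thus $F_{*}\equiv H^{\wedge}$ holds iff this is true for every $x<y$, which is exactly the statement that $F$ has no strict interior local minimum on any interval — equivalently, $F$ is unimodal. I would spell out this last equivalence: if $F$ is unimodal with mode $x_0$, then on $[x,y]$ the minimum of $F$ is at $x$ if $y\le x_0$, at $y$ if $x\ge x_0$, and at one of the two endpoints otherwise (since $F$ is nondecreasing then nonincreasing); conversely, if $F$ is not unimodal, there exist $x<w<y$ with $F(w)<F(x)$ and $F(w)<F(y)$ (using right-continuity to pick genuine values), violating the endpoint-infimum property. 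Care is needed with one-sided continuity of $F$, but this is routine.

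\textbf{Part (ii).} Equality $F_{*}=H^{\vee}$ means $F_{P_{*}}=F_{\mu}\wedge F_{\nu}$, which is the Fréchet--Hoeffding \emph{upper} bound; a coupling attaining it must be the comonotone coupling, and $F_{P}\le F_{\mu}\wedge F_{\nu}$ always, so $F_{*}=H^{\vee}$ iff the comonotone coupling equals $P_{*}$, i.e.\ (using Theorem~\ref{th:main}, since the comonotone coupling is the minimal-$F_P$, hence concordance-minimal, element and is directional) iff the comonotone coupling is the \emph{only} directional coupling, that is $\cD(\mu,\nu)$ is a singleton. For the supplementary statement: if additionally $F=F_{\mu}-F_{\nu}$ is continuous, I claim $\cD$ is a singleton iff $\mu=\nu$. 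The ``if'' direction is trivial ($\mu=\nu$ with continuous common cdf forces the only directional coupling to be the identity — actually one must check $\cD$ collapses; here continuity of $F=0$ makes $\mu$ atomless and the only directional coupling of $\mu$ with itself is $\mathrm{id}(\mu)$). The ``only if'' direction is the substantive one: if $\mu\ne\nu$ then $F\not\equiv 0$; since $F\ge 0$, $F$ is continuous, and $F\to 0$ at $\pm\infty$, $F$ attains a positive maximum at some $x_0$, and I can then exhibit two distinct directional couplings — e.g.\ the comonotone one and a perturbation that swaps a small mass of $\mu$ near a point left of $x_0$ with mass of $\nu$ to its right, staying inside $\H$ because there is ``room'' created by $F(x_0)>0$ (equivalently, $\mu|_{(x_0,\infty)}\lst\nu|_{(x_0,\infty)}$ strictly, via $\nu_{x_0}\ne$ the comonotone image). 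I would phrase the cleanest version using the machinery already available: $\cD(\mu,\nu)$ is a singleton iff $\nu_x$ equals the comonotone image of $\mu|_{(x,\infty)}$ for all $x$, which by the cdf formula $F_{\nu_x}=F_\nu$ on $[x,\infty)$ shifted appropriately fails precisely when some $F(x_0)>0$.

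\textbf{Main obstacle.} The bounds and part (i) are essentially direct from Corollary~\ref{co:cdf}. The delicate point is the ``only if'' in the final clause of part (ii): showing that continuity of $F$ together with $\mu\ne\nu$ forces genuinely two distinct directional couplings. The natural route is to use that $F$ attains a positive maximum and construct an explicit alternative coupling; one must verify it is directional (stays in $\H$), which is where the positivity of the maximum of $F$ is used, and that it differs from the comonotone coupling. Handling the construction cleanly — ideally by reducing it to a statement about $\nu_{x_0}\ne$ comonotone image via Theorem~\ref{th:existenceAndMinimality} rather than an ad hoc mass-transfer argument — is the part that needs the most care.
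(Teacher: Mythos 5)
Your handling of the two bounds, of part (i), and of the first claim in part (ii) is correct and follows essentially the paper's own route: the lower bound by taking $z\in\{x,y\}$ in Corollary~\ref{co:cdf}, the upper bound from the marginal cdf's (equivalently the comonotone cdf), the endpoint-infimum reformulation for (i), and the squeeze $F_{P_*}\le F_Q\le H^{\vee}$ to identify $F_*=H^{\vee}$ with $\cD(\mu,\nu)$ being a singleton.

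The genuine gap is the final clause of (ii), the direction ``$\cD(\mu,\nu)$ a singleton and $F$ continuous $\Rightarrow \mu=\nu$,'' which you yourself flag as the main obstacle and leave as a sketch. As written, your argument uses continuity of $F$ only to guarantee that a positive maximum is attained at some $x_0$, and then relies either on the heuristic that ``room created by $F(x_0)>0$'' permits swapping a small mass of $\mu$ to the left of $x_0$ with $\nu$-mass to its right, or on the unproven assertion that $\nu_x$ differs from the comonotone image ``precisely when some $F(x_0)>0$.'' Neither is a proof, and the stated criterion is false without a further, essential use of continuity: for $\mu=\delta_0$ and $\nu\neq\mu$ with $\mu\lst\nu$ (Remark~\ref{rk:earlierResults}(b)), $F$ attains a strictly positive maximum, yet $\cD(\mu,\nu)$ is a singleton, so no swap of the kind you describe can exist. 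Since your sketch never uses continuity beyond attainment of the maximum, it would ``prove'' this false statement; the missing ingredient is precisely where the paper works. There, one first reduces via Proposition~\ref{pr:decompose} to $\mu\wedge\nu=0$, where continuity of $F$ forces both reduced marginals to be atomless, and then invokes Proposition~\ref{pr:HahnDecomposition} to get $\mu(I)>0$ for the set $I$ of strict increase of $F$; picking $x\in I$, using that the singleton assumption makes $P_*$ comonotone so that $\mu|_{(x,\infty)}$ is sent to $\nu|_{(y,\infty)}$ for some $y>x$ with $\nu((x,y])>0$, the minimality property of $\nu_x$ from Theorem~\ref{th:existenceAndMinimality} forces $\nu_x$ to charge $(x,y]$, a contradiction. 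Some argument of this type (or a fully verified substitute) is needed to close your proof. A smaller slip in the same part: ``continuity of $F\equiv 0$ makes $\mu$ atomless'' is false (take $\mu=\nu=\delta_0$); the ``if'' direction nevertheless holds for arbitrary $\mu=\nu$, since for any directional coupling the inclusion $\{Y\le t\}\subseteq\{X\le t\}$ together with equal marginals gives $P(X\le t<Y)=0$ for all $t$, hence $P(X<Y)=0$.
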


\begin{proof}
The lower bound follows by considering $z\in\{x,y\}$ in~\eqref{eq:cdf}. The upper bound follows directly from~\eqref{eq:cdf}; alternately, it can also be obtained by noting that $H^{\vee}$ is the cdf of the comonotone coupling.
 
To see (i), note that by~\eqref{eq:cdf}, $F_{*}=H^{\wedge} $ if and only if 
$\min_{z\in[x,y]}  F(z) = F(x)\wedge F(y)$ for all $x<y$.
This is equivalent to $F$ being unimodal.
Turning to (ii), we first recall from Theorem~\ref{th:main}\,(i) that $P_{*}$ has the minimal cdf in $\cD(\mu,\nu)$. On the other hand, $H^{\vee}$ is the cdf of the comonotone coupling, which is the maximal cdf among all couplings and in particular in $\cD(\mu,\nu)$. Thus, $F_{*}=H^{\vee}$ if and only if all directional couplings have the same cdf, showing the first claim.
Now let $F$ be continuous and suppose for contradiction that 
$\mu\ne \nu$. In view of Proposition~\ref{pr:decompose}, we may assume that $\mu\wedge\nu=0$.
By Lemma~\ref{pr:HahnDecomposition}, $\mu(I)>0$ for the set $I$ of strict increase of $F$. In particular, there exists $x\in I$, which implies that $F_{\mu}(x)> F_{\nu}(x)$ and 
$\mu((x,z])>0$ for any $z>x$.  As $P_{*}$ is the comonotone coupling, $\mu|_{(x,\infty)}$ is transported to $\nu|_{(y,\infty)}$ for some $y>x$. On the other hand, $\nu((x,y])>0$ due to $\mu((x,\infty)) =\nu((y,\infty))< \nu ((x,\infty))$, which by minimality implies that $\nu_{x}$ charges $(x,y]$, contradicting $\nu_{x}=\nu|_{(y,\infty)}$. Conversely, $\mu=\nu$ clearly implies that the identity is the only directional coupling.
\end{proof}
 
\begin{remark}\label{rk:earlierResults}
(a) In view of Theorem~\ref{th:main}\,(i), the lower bound $F_{*} \ge H^{\wedge}$ is equivalent to the statement that $F_{Q} \ge H^{\wedge}$ for all $Q\in\cD(\mu,\nu)$. The latter result was first obtained in \cite{Smith.83}. See also~\cite{SarkarSmith.86} for a lower bound on a different coupling in a similar spirit. Both upper and lower bound were noted in \cite{ArnoldMolchanovZiegel.20}, where it was also observed that the lower bound holds in the case of unimodality. The sharpness conditions are novel, to the best of our knowledge.

(b) The continuity assumption in~(ii) is clearly important for the last conclusion: if $\mu$ is a Dirac mass, all couplings of $\mu$ and $\nu$ coincide and in particular $F_{*} = H^{\vee}$, but of course $\mu$ and $\nu$ need not be equal.
\end{remark}

The following is a standard example satisfying the condition in Corollary~\ref{coro:bounds}\,(i) and covering, for instance, two normal or exponential marginals in stochastic order. The appearance of an antitone coupling is a particular case of a phenomenon that will be discussed in detail in Section~\ref{se:antitone}.

\begin{example}[Single-crossing Densities]
\label{ex:single-crossing}
Suppose that 
 $\mu$ and $\nu$ have densities $f_\mu$ and $f_\nu$ which cross exactly once; that is, there exists a point $x_0\in \R$ such that 
$f_\mu(x) \ge f_\nu(x)$ for $x\le x_0$ and 
$f_\mu(x) \le f_\nu(x)$ for $x\ge x_0$. 
Then $F$ is unimodal and hence $F_{*}=H^{\wedge}$.  
By Proposition~\ref{pr:decompose}
and the fact that the measures $\mu'$ and $\nu'$ (defined therein) are supported on disjoint sets, we see that 
 $P_*(\mu,\nu)$ is the sum of an identity coupling  $\id(\mu\wedge \nu)$ 
and an antitone coupling   $P_*(\mu',\nu')$. 
\end{example}

\section{The Transport Map}\label{se:transportMap}

The aim of this section is to prove Theorem~\ref{th:main-transport} on the optimal transport map~$T$. The analysis rests on a specific Hahn decomposition that holds for arbitrary signed, diffuse measures on $\R$ and is provided in the first subsection. We then return to our transport problem, showing in Sections~\ref{se:Tbasics}--\ref{se:TcouplesAndCrossing} that~$T$ induces a coupling with the constrained crossing property, and thus is optimal. Section~\ref{se:atoms} explains how marginals with atoms can be reduced to the continuous case by a simple transformation.

\subsection{Sets of Increase and Decrease}
\label{se:tech1}

Let $F: \R\to\R$ be a continuous function of bounded variation. We recall that the signed measure $\rho$ associated  to $F$ admits a unique Jordan decomposition $\rho=\mu-\nu$ into mutually singular nonnegative measures, and then $\tau=\mu+\nu$ is the total variation measure of $\rho$.
(In this section, $\mu$ and $\nu$ are arbitrary finite measures---not necessarily of the same mass or even $\mu \lst \nu$.) 
 Similarly to $\rho$, the function $F$ can be uniquely decomposed as $F=F_{\mu}-F_{\nu}$ into continuous nondecreasing functions that are mutually singular; that is, $V:=F_{\mu}+F_{\nu}$ is the total variation of $F$. Disjoint Borel sets $B_{\mu}, B_{\nu}$ form a \emph{Hahn decomposition} for $\rho$  (or $F$)
if $\mu(B_{\mu}^{c})=\nu(B_{\nu}^{c})=0$ and $\mu(B_{\nu})=\nu(B_{\mu})=0$. In particular, $\tau$ is then carried by $B_{\mu}\cup B_{\nu}$.

If $F$ is of class $C^{1}$, the sets $\{\partial F>0\}$ and $\{\partial F<0\}$ clearly form a Hahn decomposition. Moreover, the two sets are countable unions of intervals where $F$ is monotone. Our purpose is to provide a similar Hahn decomposition for bounded variation functions---here the sets will merely be Borel, as it is well known that a function can be absolutely continuous without being monotone on any interval (e.g., \cite[p.\,109, Exercise~41]{Folland.99}).

Consider a function $F: \R\to\R$ and $x\in\R$. We call $x$ a \emph{point of strict increase} if there is a neighborhood of $x$ in which $x_{0}<x<x_{1}$ implies $F(x_{0})<F(x)<F(x_{1})$. The set of all such points is called the \emph{set of strict increase} of $F$ and denoted $I_{F}$. Points of strict decrease are defined analogously, and their set is denoted $D_{F}$.

\begin{proposition}\label{pr:HahnDecomposition}
  Let $F: \R\to\R$ be a continuous function of bounded variation. The sets $I_{F},D_{F}$ of strict increase and decrease form a Hahn decomposition for $F$.
\end{proposition}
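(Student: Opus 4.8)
The goal is to show that $I_F$ and $D_F$ form a Hahn decomposition for the continuous bounded-variation function $F$, i.e.\ (writing $F = F_\mu - F_\nu$ for the Jordan decomposition into mutually singular nondecreasing continuous functions) that $I_F, D_F$ are disjoint Borel sets with $\mu(I_F^c) = \nu(D_F^c) = 0$ and $\mu(D_F) = \nu(I_F) = 0$. Disjointness of $I_F$ and $D_F$ is immediate from the definitions, and Borel measurability follows because $I_F$ can be written as a countable intersection/union of conditions of the form ``there exists a rational neighborhood on which the increase inequalities hold,'' which are Borel. So the content is the four measure-zero statements, and by the symmetry $F \leftrightarrow -F$ (which swaps $I_F \leftrightarrow D_F$ and $\mu \leftrightarrow \nu$) it suffices to prove $\mu(I_F^c) = 0$ and $\nu(I_F) = 0$; equivalently, that $\mu$ is carried by $I_F$ and $\nu$ gives no mass to $I_F$.

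\textbf{Step 1: $\nu(I_F) = 0$.} I would argue that if $x \in I_F$ then $F$ is nondecreasing in a neighborhood of $x$ — more precisely, the ``local'' upper variation of $F$ near $x$ equals the local oscillation, so the ``negative'' part of the variation puts no mass near $x$. Concretely, for $x \in I_F$ pick a neighborhood $U$ with $x_0 < x < x_1$ in $U$ implying $F(x_0) < F(x) < F(x_1)$. On such a $U$, I claim $\nu$ restricted to a slightly smaller neighborhood is controlled: the total variation of $F$ on any subinterval to the left of $x$ ending at $x$, plus to the right of $x$, telescopes, and the strict monotonicity at $x$ forces the local negative variation at $x$ to vanish in the limit. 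A clean way is to use that $\nu(\{x\}) = 0$ (since $F$ is continuous, hence $\tau$ is atomless) and then a Vitali-type / covering argument: cover $I_F$ by a countable family of intervals on each of which $F$ is... — actually $F$ need not be monotone on any interval, so instead I would show directly that for each $x \in I_F$, $\limsup_{r\to 0} \nu((x-r,x+r))/\tau((x-r,x+r)) = 0$, and conclude $\nu(I_F) = 0$ by the Besicovitch/Lebesgue differentiation theorem applied to $d\nu/d\tau$, since $d\nu/d\tau \in \{0,1\}$ $\tau$-a.e.\ by mutual singularity.

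\textbf{Step 2: $\mu(I_F^c) = 0$, equivalently $\mu$ is carried by $I_F$.} By Step 1 applied with $-F$, one also gets $\mu(D_F) = 0$. It then remains to show $\mu$ gives no mass to the ``null set'' $N := (I_F \cup D_F)^c$ of points that are neither points of strict increase nor strict decrease. Here I would use the Lebesgue differentiation theorem in the form $d\mu/d\tau \in \{0,1\}$ $\tau$-a.e.: it suffices to show that $\tau$-a.e.\ point $x$ with $d\mu/d\tau(x) = 1$ (a ``density point of $\mu$ for $\tau$'') lies in $I_F$. At such an $x$, $\nu((x-r,x+r)) = o(\tau((x-r,x+r)))$, so $F(x_1) - F(x_0) \ge \mu((x_0,x_1)) - \nu((x_0,x_1))$ is strictly positive once the interval is small and genuinely straddles a positive-$\mu$-measure neighborhood of $x$ — and since $\mu$-a.e.\ such $x$ has $\mu((x-r,x)) > 0$ and $\mu((x,x+r)) > 0$ for all small $r$ (density points of a measure carry mass on both sides), one gets $F(x_0) < F(x) < F(x_1)$, i.e.\ $x \in I_F$. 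Combining, $\mu(I_F^c) \le \mu(D_F) + \mu(N) = 0$.

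\textbf{Main obstacle.} The delicate point is Step 2: translating the infinitesimal density statement ``$d\mu/d\tau = 1$ and $d\nu/d\tau = 0$ at $x$'' into the \emph{finite-neighborhood, pointwise} conclusion ``$x \in I_F$.'' The subtlety is that $d\nu/d\tau(x) = 0$ only gives $\nu(B) = o(\tau(B))$ along shrinking balls $B \ni x$, not on every subinterval, and $F(x_1) - F(x_0)$ is a difference of $\mu$- and $\nu$-masses of the \emph{open} interval $(x_0,x_1)$, which may be off-center. One must check carefully that for $\mu$-a.e.\ $x$ one can choose, for all sufficiently small $x_0 < x < x_1$, the estimate $\mu((x_0,x_1)) > \nu((x_0,x_1))$; this requires knowing that $\mu$ charges every one-sided neighborhood of $x$ (true for $\mu$-a.e.\ $x$, e.g.\ $x$ in the support of $\mu|_{\text{density points}}$ from both sides, which holds outside a $\mu$-null set) together with a uniform comparison of $\mu$- and $\nu$-mass on such intervals. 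I expect this to be the technical heart of the argument; the rest is bookkeeping with the Jordan/Hahn decomposition and the Lebesgue/Besicovitch differentiation theorem.
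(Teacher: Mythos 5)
Your outline has two genuine problems. First, Step 1 rests on a false pointwise claim. Strict increase at $x$ only constrains the \emph{sign} of $F(t)-F(x)$ for $t$ near $x$; it does not make $F$ locally nondecreasing, and it does not force the local negative variation to be negligible. For example, let $F(t)=t+g(t)$ where $g$ is continuous, vanishes at dyadic points, and on each block $[2^{-n-1},2^{-n}]$ (and its mirror image) performs $2^{n}$ spikes of height $2^{-2n}$ with very steep descents; then $F$ is continuous and of bounded variation, $F(t)>F(0)$ for small $t>0$ and $F(t)<F(0)$ for small $t<0$, so $0\in I_F$, yet the downward variation of $F$ on $(0,2^{-n})$ is of order $2^{-n}$, so $\nu((-r,r))/\tau((-r,r))$ stays bounded away from $0$ as $r\to0$. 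Hence your claim that every $x\in I_F$ has $\tau$-density $0$ for $\nu$ is wrong. What is true, and what you actually need, is the \emph{converse} implication: at a point where the symmetric density $d\nu/d\tau$ equals $1$, one of the two half-intervals $(x-r_n,x]$, $[x,x+r_n)$ carries at least half the $\tau$-mass along some $r_n\downarrow0$, which forces $F(x-r_n)>F(x)$ or $F(x+r_n)<F(x)$ and hence $x\notin I_F$; together with the differentiation theorem this gives $\nu(I_F)=0$. So Step 1 is repairable, but as written the argument is incorrect.

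Second, Step 2 — which you yourself flag as the technical heart — is not actually proved, and this is exactly where the centered (Lebesgue/Besicovitch) differentiation theorem you invoke is insufficient. Membership in $I_F$ requires $\mu((x_0,x])>\nu((x_0,x])$ for \emph{every} $x_0$ in a left neighborhood and $\mu((x,x_1])>\nu((x,x_1])$ for every $x_1$ in a right neighborhood, while the hypothesis $\nu((x-r,x+r))=o(\tau((x-r,x+r)))$ says nothing about off-center intervals with endpoint $x$: the local $\tau$-mass can sit almost entirely on one side (all of it $\mu$), with a comparatively tiny but $\nu$-dominated amount on the other side, so that $F$ fails to increase strictly through $x$ even though the symmetric $\mu$-density is $1$ and $\mu$ charges both sides. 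What is needed is a one-sided differentiation statement: for $\tau$-a.e.\ $x$, the ratio $\bigl(F_{\mu}(x')-F_{\mu}(x)\bigr)/\bigl(V(x')-V(x)\bigr)$ converges along arbitrary $x'\to x$ (from either side) to a limit in $\{0,1\}$. The paper obtains precisely this by the time change through the total variation: $F_{\mu}\circ V^{-1}$ is Lipschitz, Lebesgue differentiation applies with respect to Lebesgue measure, and the limit is pulled back via $V$; it then disposes of $(I_F\cup D_F)^c$ by splitting it into strict local extrema (countable), intervals of constancy ($\tau$-null), and oscillation points, at which the one-sided ratio would have a subsequential limit $1/2$, a contradiction. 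Without such a one-sided result (or an equivalent covering argument for arbitrary intervals having $x$ as an endpoint), your plan does not close, and the "uniform comparison of $\mu$- and $\nu$-mass" you defer is exactly the missing ingredient.
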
 

\begin{proof}
  \emph{Step 1.} Let $\mu,\nu,\tau$ and $F_{\mu},F_{\nu},V$ be as introduced above. Clearly $\mu,\nu$ admit densities $f_{\mu},f_{\nu}$ with respect to $\tau$, and these can be chosen to be indicator functions of complementary sets by the Hahn decomposition theorem. That is, $f_{\mu}(x),f_{\nu}(x)\in\{0,1\}$ and $f_{\mu}(x)+f_{\nu}(x)=1$ for all $x\in\R$.

Next, we claim that (with $z/0:=0$, say) the limit 
  $$
    f(x):=\lim_{\eps\to0 } \frac{\mu([x,x+\eps])}{\tau([x,x+\eps])}
  $$
  exists for $\tau$-a.e.\ $x\in\R$ and defines a version of the Radon--Nikodym derivative $d\mu/d\tau$---existence meaning particular that the limit is the same along any sequence  $0\neq\eps_{n}\to 0$. 
  Let $V^{-1}$ be the right-continuous inverse of $V$. Then $F_{\mu}\circ V^{-1}$ is nondecreasing and $F_{\mu}\ll V$ implies that $\mu_{F_{\mu}\circ V^{-1}}\ll\lambda$, where $\mu_{F_{\mu}\circ V^{-1}}$ is the Lebesgue--Stieltjes measure of $F_{\mu}\circ V^{-1}$ and $\lambda$ is the Lebesgue measure. By Lebesgue's differentiation theorem
 \cite[Theorem 3.21, p.\,98]{Folland.99},  
  $F_{\mu}\circ V^{-1}$ is $\lambda$-a.e.\ differentiable and the derivative $\phi$ defines a density $d\mu_{F_{\mu}\circ V^{-1}}/d\lambda$. (In fact, $F_{\mu}\circ V^{-1}$ is even Lipschitz.) That is, there exists a Lebesgue-nullset $N_{\lambda}$ such that for $y\notin N_{\lambda}$ and $y'\to y$,
  $$
    \phi(y)=\lim_{y'\to y} \frac{F_{\mu}(V^{-1}(y')) - F_{\mu}(V^{-1}(y))}{y' - y} 
  $$
  exists. 
  Let $N=V^{-1}(N_{\lambda})$; then $\tau(N)=0$ as $\tau=\lambda\circ V$. For $x\notin N$ we have $y:=V(x)\notin N_{\lambda}$. As $V$ is continuous and $F_{\mu}=F_{\mu}\circ V^{-1}\circ V$, using the above with $y'=V(x')$ yields that
  $$
    f(x) = \lim_{x'\to x} \frac{F_{\mu}(x') - F_{\mu}(x)}{V(x') - V(x)} = \lim_{y'\to y} \frac{F_{\mu}(V^{-1}(y')) - F_{\mu}(V^{-1}(y))}{y' - y} 
  $$  
  exists and satisfies $f(x)=\phi(V(x))$. 
    By the change-of-variable formula we see that  $f$ is a density of $\mu$ with respect to $\tau$.
  It now follows that $f=f_{\mu}$ $\tau$-a.e. As a result, for all $x$ outside a $\tau$-nullset and any sequence $\eps_{n}\to0$,
  $$
    f(x)=\lim_{n}\frac{F_{\mu}(x+\eps_{n})-F_{\mu}(x)}{F_{\mu}(x+\eps_{n})-F_{\mu}(x) + F_{\nu}(x+\eps_{n})-F_{\nu}(x)} \in\{0,1\}.
  $$  

  \emph{Step 2.} Let $I=I_{F}$, $D=D_{F}$. The set $(I\cup D)^{c}$ consists of three types of points. First, the  strict local minimum and maximum points; this subset is countable and hence a $\tau$-nullset as $V$ is continuous. Second, the points which are contained in an interval of constancy of $F$. There are countably many such intervals and each one is clearly a $\tau$-nullset. Third, the points of oscillation: If $x\in (I\cup D)^{c}$ is not in an interval of constancy of $F$ and if $0\neq \eps_{n}\to0$, then for all $n$ large we have either $F_{\mu}(x+\eps_{n})-F_{\mu}(x)\neq0$ or $F_{\nu}(x+\eps_{n})-F_{\nu}(x)\neq0$. If, in addition, $x$ is not a strict local extremum, continuity implies that there exist $0\neq \eps_{n}\to0$ such that $F(x)=F(x+\eps_{n})$; that is, $F_{\mu}(x+\eps_{n})-F_{\mu}(x)= F_{\nu}(x+\eps_{n})-F_{\nu}(x)$. Combining these two properties,
  $$
    F_{\mu}(x+\eps_{n})-F_{\mu}(x)= F_{\nu}(x+\eps_{n})-F_{\nu}(x) \neq 0
  $$
  for all $n$ large. In particular,
  $$
    \frac{F_{\mu}(x+\eps_{n})-F_{\mu}(x)}{F_{\mu}(x+\eps_{n})-F_{\mu}(x) + F_{\nu}(x+\eps_{n})-F_{\nu}(x)}\to \frac12.
  $$
  In view of Step~1, the set of all such $x$ must be a $\tau$-nullset. This completes the proof that $(I\cup D)^{c}$ is $\tau$-null. It is easy to see that $I$ and $D$ are disjoint Borel sets. Noting also that $\{f=1\}\subseteq I$ and $\{f=0\}\subseteq D$, it follows that $I,D$ form a Hahn decomposition. 
\end{proof}

\subsection{Basic Properties of $T$}\label{se:Tbasics}

We return to our setting with given marginals $\mu\lst\nu$. Throughout this section we assume that $\mu \wedge \nu=0$, or equivalently, that $\mu$ and $\nu$ are mutually singular. For simplicity of exposition, we first focus on the case of diffuse marginals $\mu$ and $\nu$; the extension to measures with atoms is then simple and carried out in Section~\ref{se:atoms}.

We consider $F=F_{\mu}-F_{\nu}$, a nonnegative continuous function of bounded variation with $F(-\infty)=F(\infty)=0$,  its graph $G$ and its hypograph $H$,
$$
  G = \{(x,z):\, z= F(x)\}, \qquad H = \{(x,z):\, z\leq F(x)\}.
$$
Recall from Theorem~\ref{th:main-transport} that
\begin{equation}\label{eq:def:t}
  T(x)=\inf\{y\geq x:\, (y,F(x))\notin H\}
\end{equation}
for $x\in \R$, with the convention $\inf\emptyset=\infty$. 
Let $I=I_{F}$ and $D=D_{F}$ be the sets of strict increase and decrease of~$F$, respectively (see Section~\ref{se:tech1}).

\begin{lemma}\label{le:Tpositive}
  We have $\mu(I)%
  =\nu(D)=1$. The function $T$ is upper semicontinuous and bimeasurable, it satisfies $(T(x),F(x))\in G$ whenever $T(x)<\infty$, and $T(x)=\infty$ if and only if $F(x)=0$.
\end{lemma}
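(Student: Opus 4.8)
The plan is to unpack the definition \eqref{eq:def:t} of $T$ using the topological structure of $H$ and the continuity of $F$, then transfer mass statements through the already-established Hahn decomposition (Proposition~\ref{pr:HahnDecomposition}). First I would record that since $F$ is continuous and nonnegative with $F(\pm\infty)=0$, the hypograph $H$ is closed and, along the horizontal line at height $F(x)$, the set $\{y:\,(y,F(x))\in H\}=\{y:\,F(y)\ge F(x)\}$ is closed and contains $x$. Hence $T(x)=\inf\{y\ge x:\,F(y)<F(x)\}$: if this infimum is finite, continuity of $F$ forces $F(T(x))=F(x)$, i.e.\ $(T(x),F(x))\in G$, which is one of the assertions; and $T(x)=\infty$ exactly when $F(y)\ge F(x)$ for all $y\ge x$, which combined with $F(\infty)=0$ and $F\ge 0$ gives $F(x)=0$, and conversely $F(x)=0$ makes the defining set empty so $T(x)=\infty$. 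This settles the last two claims.

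For the measurability and semicontinuity, I would argue that $T$ is upper semicontinuous by a direct $\limsup$ estimate: if $x_n\to x$ then for any $y>T(x)$ with $F(y)<F(x)$, continuity of $F$ at both $x$ and $y$ gives $F(y)<F(x_n)$ and $y\ge x_n$ for large $n$, hence $\limsup_n T(x_n)\le y$; letting $y\downarrow T(x)$ yields $\limsup_n T(x_n)\le T(x)$. Upper semicontinuity gives Borel measurability of $T$ (with values in $(-\infty,\infty]$); bimeasurability—meaning the image of a Borel set under $T$ is Borel, or more precisely that $T$ as a partial map between Borel spaces is bimeasurable—follows since $T$ restricted to $\{F>0\}$ is injective. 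Injectivity holds because $T(x)=T(x')$ with, say, $x<x'$ and finite common value $t$ would force $F(x)=F(t)=F(x')$ and $F>F(x)$ nowhere on $[x,t]\supseteq[x,x']$; but then $T(x)$ could not exceed $x'$ unless $F(x')=F(x)$ with the same property on $[x',t]$—one must check this carefully, but the upshot is that $T$ is injective on the set of strict-increase points, where it matters. An injective Borel map on a Borel subset of a Polish space is bimeasurable by the Lusin–Souslin theorem, giving the claim.

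The substantive point is $\mu(I)=\nu(D)=1$. Here I would invoke Proposition~\ref{pr:HahnDecomposition}: $I_F,D_F$ form a Hahn decomposition for the signed measure $\rho$ associated to $F=F_\mu-F_\nu$. Since $\mu,\nu$ are mutually singular (our standing assumption $\mu\wedge\nu=0$) and nonnegative with $F_\mu,F_\nu$ their cdfs, the Jordan decomposition of $\rho$ is exactly $\rho=\mu-\nu$; thus $\mu$ is carried by $I_F$ and $\nu$ by $D_F$. As $\mu,\nu$ are probability measures this reads $\mu(I)=\nu(D)=1$. I would be careful to note that "carried by" here means $\mu(I^c)=0$, which is precisely $\mu(B_\nu)=\mu(B_\mu^c)=0$ from the definition of Hahn decomposition with $B_\mu=I_F$, $B_\nu=D_F$.

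The main obstacle I anticipate is the precise statement and proof of injectivity/bimeasurability of $T$: one must pin down that on the set of strict-increase points $I_F$ (which carries $\mu$), distinct points have distinct images, handling the flat pieces of $F$ correctly, and then cite the Lusin–Souslin bimeasurability theorem. The rest—closedness of $H$, the characterization $T(x)=\infty\iff F(x)=0$, the identity $(T(x),F(x))\in G$, and the mass identities via Proposition~\ref{pr:HahnDecomposition}—should be routine once the definitions are unwound. In writing this up I would lead with the reformulation $T(x)=\inf\{y\ge x:\,F(y)<F(x)\}$, then dispatch the graph statement and the infinity dichotomy, then upper semicontinuity, then bimeasurability, and finally the mass identities.
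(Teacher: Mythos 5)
Most of your proposal is correct and runs parallel to the paper: the reformulation $T(x)=\inf\{y\ge x:\,F(y)<F(x)\}$, the identity $(T(x),F(x))\in G$ for finite $T(x)$, the dichotomy $T(x)=\infty\iff F(x)=0$, upper semicontinuity (the paper gets this in one line from closedness of $H$), and $\mu(I)=\nu(D)=1$ via Proposition~\ref{pr:HahnDecomposition} and mutual singularity are all fine.

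The gap is the bimeasurability step. Your claim that $T$ is injective on $\{F>0\}$ is false: take $F$ piecewise linear with $F(-1)=0$, $F(0)=1$, $F(1)=2$, $F(2)=1$, $F(3)=2$, $F(5)=0$; then $x=0$ is a point of strict increase, $x'=2$ is a local minimum at the same level, both lie in $\{F>0\}$, and $T(0)=T(2)=4$. Worse, fibers of $T$ can be uncountable: any interval on which $F$ is constant at the level $\min_{[x,t]}F$ is mapped entirely to the single point $t$. Your fallback---injectivity on the set $I_F$ of points of strict increase, which is indeed true---combined with Lusin--Souslin only shows that $T(B)$ is Borel for Borel $B\subseteq I_F$. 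But the lemma asserts $T(\cB(\R))\subseteq\cB(\R)$, and this full statement is what is used later: in Theorem~\ref{th:atomsTransform} one needs $T'(J_x)$ to be Borel for intervals $J_x$, which are not contained in $I_{F'}$. Images of Borel sets meeting the complement of $I_F$ could a priori be analytic and non-Borel, so ``where it matters'' does not discharge the claim. The paper closes exactly this point by a different route: it invokes Purves' theorem, which reduces bimeasurability of a Borel map to showing that at most countably many $y$ have uncountable preimage $T^{-1}(y)$, and verifies this by observing that all points of a multi-point fiber except possibly one are local minima of $F$ at the common level $F(T^{-1}(y))$, while a real function has only countably many local minimum values. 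To repair your write-up you would need either this counting argument or an explicit decomposition of $\R$ into countably many pieces on which $T$ is injective plus a remainder with countable image (handling constancy intervals and oscillation points of $F$), rather than the injectivity claim as stated.
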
 

\begin{proof}
The  statement $\mu(I)= \nu(D)=1$ follows directly from Proposition~\ref{pr:HahnDecomposition} since $I,D$ form a Hahn decomposition for $F$ and $\mu\wedge\nu=0$. As $H$ is closed, $T$ is upper semicontinuous. In view of $G=\partial H$, we also have $(T(x),F(x))\in G$ whenever $T(x)<\infty$. Finally, $F(\infty)=0$ implies that $T(x)=\infty$ if and only if $F(x)=0$. To see that $T$ is bimeasurable---i.e., also satisfies $T(\cB(\R))\subseteq \cB(\R)$---it suffices to show that there are at most countably many points~$y$ whose preimage $T^{-1}(y)$ is uncountable; see for instance \cite[Main~Theorem]{Purves.66}. 
  Let $y$ be such that $T^{-1}(y)$ contains more than one point. The construction of $T$ shows that all elements $x\in T^{-1}(y)$, except possibly one, are local minima of $F$, and they have the common value $F(x)=F(T^{-1}(y))$. Any real function $f$ only has countably many local minimum values~$f(x)$ (because each local minimum is minimal within a rational interval, yielding an injection of the minimum values into $\Q^{2}$), so it suffices to show that for fixed $y$, $T^{-1}(y)$ contains at most countably many points $x$ which also have the property that $T^{-1}(x)$ has several elements. If $x_{0}<x$ is such that $T(x_{0})=x$, it follows that $T^{-1}(x')=\emptyset$ for all $x'\in (x_{0},x)$ with $F(x')=F(x)$. Thus we can associate with $x$ an interval of positive length in which it is unique with the property in question, and that implies the claim.
\end{proof}

\subsection{Marginals and Geometry of $T$}\label{se:TcouplesAndCrossing}

\begin{lemma}\label{le:Tcouples}
  The map $T$ transports $\mu$ to $\nu$.
\end{lemma}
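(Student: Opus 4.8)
The goal is to show that the push-forward of $\mu$ under $T$ is $\nu$, i.e.\ $\mu(T^{-1}(A)) = \nu(A)$ for all Borel $A$. Equivalently, since we already know from Lemma~\ref{le:Tpositive} that $T$ is upper semicontinuous and bimeasurable, it suffices to compare cumulative distribution functions: I will show that for every $y\in\R$, the set $\{x:\, T(x)\le y\}$ has $\mu$-measure equal to $F_{\nu}(y)$. Because $T(x)<\infty$ exactly on $\{F>0\}$ and $\mu(\{F=0\})=0$ (as $\{F=0\}\subseteq D^{c}\cap\text{(non-increase)}$, indeed $\mu(I)=1$ and $F>0$ on $I$), the event $T(x)\le y$ is a subset of $\{F(x)>0\}$ up to a $\mu$-null set, so we only need to track the part of $\mu$ carried by $I$.

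\textbf{Key geometric identity.} The definition \eqref{eq:def:t} says $T(x)$ is the first $y\ge x$ at which the horizontal ray at height $F(x)$ exits the hypograph $H$, i.e.\ the first $y\ge x$ with $F(y)<F(x)$ (using continuity of $F$; strict inequality because $G=\partial H$). So $T(x)\le y$ means: there exists $z\in[x,y]$ with $F(z)<F(x)$, equivalently $\min_{z\in[x,y]}F(z)<F(x)$, equivalently $F(x)>\inf_{z\in[x,y]}F(z)$. I would fix $y$ and analyze $A_{y}:=\{x\le y:\, F(x)>\inf_{z\in[x,y]}F(z)\}$ together with its complement within $\{x\le y\}$, namely $B_{y}:=\{x\le y:\, F(x)=\min_{z\in[x,y]}F(z)\}$, the set of "running minima from the right" on $(-\infty,y]$. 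The plan is to prove $\mu(B_{y}) = F_{\mu}(y) - F_{\nu}(y)$; since $\mu((-\infty,y]) = F_{\mu}(y)$, this gives $\mu(A_{y}) = F_{\nu}(y) = \mu(\{T\le y\})$ modulo the null set, as desired. For the tail, one checks separately that $T(x)\le y$ fails for $x>y$ trivially except we also must account for $x>y$ with $F(x)=0$ having $T(x)=\infty$ — fine — and this matches the fact that $\nu((y,\infty)) = 1 - F_{\nu}(y)$ should equal $\mu(\{T>y\})$, which follows by complementation once the $(-\infty,y]$ part is done.

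\textbf{Computing $\mu(B_{y})$.} Here I would use the Hahn-decomposition machinery of Section~\ref{se:tech1}. Write $m_{y}(x) := \min_{z\in[x,y]}F(z)$ for $x\le y$; this is a nondecreasing, continuous function of $x$ on $(-\infty,y]$ with $m_{y}(y)=F(y)$ and $m_{y}(-\infty)=0$, and $B_{y}=\{x:\,F(x)=m_{y}(x)\}$ is the contact set. The key observation is that on the open set $\{x<y:\, F(x)>m_{y}(x)\}=A_{y}$, $m_{y}$ is locally constant, so its associated Lebesgue--Stieltjes measure $\rho_{m_{y}}$ is carried by $B_{y}$; moreover on $B_{y}$ one has, in a one-sided sense, $dm_{y} = dF$ restricted to the "decreasing from the right" behavior. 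More precisely, I claim $\rho_{m_{y}} = \nu|_{(-\infty,y]}$: intuitively the running minimum only decreases (as $x$ decreases) where $F$ itself is strictly decreasing from the right, which is governed by the $\nu$-part $F_{\nu}$ of the Jordan decomposition $F=F_{\mu}-F_{\nu}$; and it picks up \emph{all} of $F_{\nu}$'s mass on $(-\infty,y]$ because every bit of decrease of $F$ to the right eventually becomes a new running minimum when scanned from $y$ leftward — here continuity of $F$ and $\mu\wedge\nu=0$ (so $D=D_{F}$ carries all of $\nu$ by Proposition~\ref{pr:HahnDecomposition}) are what make this clean. Granting $\rho_{m_{y}} = \nu|_{(-\infty,y]}$, evaluating total masses gives $F(y) - 0 = \nu((-\infty,y]) = F_{\nu}(y)$; this is just a consistency check. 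What I actually need is $\mu(B_{y})$: on $B_{y}$ we have $F=m_{y}$, and $F-m_{y}=0$ there, so $d(F-m_{y})$ has no mass \emph{inside} $B_{y}$ beyond boundary effects, meaning $dF = dm_{y}$ when both are restricted appropriately to $B_{y}$; combined with $dF = d\mu - d\nu$ and $dm_{y}|_{B_{y}} = d\nu|_{(-\infty,y]}$ (since $\rho_{m_{y}}$ is carried by $B_{y}$), we get $d\mu|_{B_{y}} = dF|_{B_{y}} + d\nu|_{B_{y}}$, and a careful bookkeeping of which $\mu$-mass sits in $B_{y}$ versus $A_{y}$ yields $\mu(B_{y}) = F_{\mu}(y) - F_{\nu}(y)$.

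\textbf{Main obstacle.} The delicate point is the identification $\rho_{m_{y}} = \nu|_{(-\infty,y]}$ and, relatedly, the rigorous statement that "$F$ agrees with its running minimum $m_{y}$ on a set carrying all the $\nu$-mass up to $y$ and no $\mu$-mass." Without continuity/monotonicity of $F$ on intervals (which we lack — $F$ is only BV), the running-minimum set $B_{y}$ can be a complicated Cantor-like set, and one must argue at the level of measures rather than intervals. The clean way is: (1) show $A_{y}$ is open and $m_{y}$ is locally constant there, hence $\rho_{m_{y}}(A_{y})=0$, so $\rho_{m_{y}}$ is carried by $B_{y}\cup\{y\}$; (2) show $m_{y}\le F$ everywhere on $(-\infty,y]$ with equality on $B_{y}$, so $G:=F-m_{y}\ge 0$ is continuous, vanishes on $B_{y}$, and its Lebesgue--Stieltjes measure $\rho_{G}=\rho_{F}|_{(-\infty,y]} - \rho_{m_{y}}$ is carried by $A_{y}$'s closure; (3) on $B_{y}$, since $G$ attains its minimum value $0$, use Proposition~\ref{pr:HahnDecomposition} applied to $G$ — the points of strict increase of $G$ from the left are where $\mu$-type mass of $\rho_G$ sits, and conclude $\mu$ is carried (within $(-\infty,y]$) by $I_{F}\cap A_{y}$ together with the right amount on $B_{y}$. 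I expect steps (2)--(3) — disentangling the $\mu$- and $\nu$-components of $\rho_{m_{y}}$ and $\rho_{F}$ on the contact set using only the Hahn decomposition of Section~\ref{se:tech1} — to be the genuinely technical heart of the argument; the CDF comparison and the reduction to $\{x\le y\}$ are routine.
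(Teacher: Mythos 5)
Your reduction to cdf's and the handling of the $\mu$-null sets ($\{F=0\}$, the single boundary point with $T(x)=y$) is fine and matches the opening of the paper's argument. The genuine problem is the identity you build everything on, $\rho_{m_y}=\nu|_{(-\infty,y]}$: it is false. The total mass of $\rho_{m_y}$ on $(-\infty,y]$ is $m_y(y)-m_y(-\infty)=F(y)=F_\mu(y)-F_\nu(y)$, not $F_\nu(y)$; your own ``consistency check'' asserts $F(y)=F_\nu(y)$, which fails for generic marginals. Concretely, take $\mu=\Unif[0,1]$, $\nu=\Unif[1,2]$ and $y=1$: then $m_y=F$ on $(-\infty,1]$, so $\rho_{m_y}$ is Lebesgue measure on $[0,1]$ with mass $1$, while $\nu((-\infty,1])=0$. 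You have the direction reversed: as $x$ increases, the running minimum $m_y$ increases exactly where, on the contact set $B_y$, $F$ itself increases, so the variation of $m_y$ is governed by the $\mu$-part of $F$, not the $\nu$-part (in the example $\rho_{m_y}=\mu|_{B_y}$). Since the subsequent bookkeeping that is supposed to yield $\mu(B_y)=F_\mu(y)-F_\nu(y)$ rests on this identity, and you yourself defer steps (2)--(3) as the unproven ``technical heart,'' the proof as proposed does not go through.

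The target $\mu(B_y)=F(y)$ is correct, and there is a short route to it that bypasses any identification of $\rho_{m_y}$: a point of strict decrease $x\in D$ with $x<y$ cannot be a running minimum from the right, i.e.\ $D\cap(-\infty,y)\subseteq A_y$, so $\nu(B_y)=0$ and hence $\mu(B_y)=(\mu-\nu)(B_y)=\rho_F((-\infty,y])-\rho_F(A_y)$. Then note that $A_y=\{x\le y:\,M(x)>0\}$ with $M=F-m_y$ is open (since $M$ is continuous and $M(y)=0$) and is a countable union of intervals $J=(a,b)$ with $M(a)=M(b)=0$ and $M>0$ on $J$; the endpoint conditions force $F(a)=\min_{[a,y]}F=\min_{[b,y]}F=F(b)$, so $(\mu-\nu)(J)=0$ and $\rho_F(A_y)=0$, giving $\mu(B_y)=F(y)$ as desired. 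This is exactly the paper's proof: it works with $M$ directly, uses $\mu(I)=\nu(D)=1$ from Proposition~\ref{pr:HahnDecomposition} and Lemma~\ref{le:Tpositive}, and shows $(\mu-\nu)((-\infty,y]\cap\{M>0\})=0$ by the interval decomposition, with no need for the delicate contact-set measure theory you anticipate.
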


\begin{proof}
We show that $\mu \{T \le y\} =\nu ((-\infty,y])$ for $y\in \R$.
Define the continuous function
$$
  M(x) = F(x) -  \min_{z\in [x,y]}F(z) \ge 0, \quad x\in (-\infty, y].
$$
For $x\in I$ with $x\le y$, $M(x)>0$ is equivalent to the existence of $z\in (x,y]$ such that $F(z)<F(x)$, thus equivalent to $T(x)\le y$. As $\mu$ is concentrated on $I$ and $T$ is directional, it follows that
 $$
   \mu\{T \le y\} = \mu \{x\in (-\infty,y]: T(x)\le y\} = \mu( (-\infty,y]\cap \{M>0\}).
 $$
On the other hand, $M>0$ on $D\cap(-\infty,y)$ and $\nu$ is concentrated on $D$, hence $\nu ((-\infty,y]) = \nu( (-\infty,y]\cap \{M>0\})$ and it suffices to show that
$$
  (\mu-\nu)( (-\infty,y]\cap \{M>0\})=0.
$$
Noting that $M(-\infty)=M(y)=0$, we see that the set $(-\infty,y]\cap \{M>0\}$ is open and thus is the union of countably many open intervals
of the form $J=(a,b)$ with $M(a)=M(b)=0$ and $M>0$ on $J$. The last two facts and the definition of $M$ imply that
$$
  F(a)= \min_{z\in [a,y]}F(z) = \min_{z\in [b,y]} F(z) = F(b)
$$
and hence $(\mu-\nu)(J)=F(b)-F(a)=0$, completing the proof.
\end{proof}

\begin{lemma}\label{le:TconstrainedCrossing}
  If $x',x\in\R$ satisfy $x'<x\leq T(x')$, then $T(x')\geq T(x)$. In particular, the graph of $T$ has the constrained crossing property.
\end{lemma}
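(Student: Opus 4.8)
The goal is to show that if $x'<x\le T(x')$ then $T(x')\ge T(x)$; once this is established, the absence of improvable pairs in the graph of $T$ is immediate, because an improvable pair $((x',T(x')),(x,T(x)))$ would require $x'<x\le T(x')<T(x)$, which the claim forbids. So the entire content is the first assertion.

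First I would unwind the definition \eqref{eq:def:t}: $T(x') = \inf\{y\ge x':\,(y,F(x'))\notin H\}$, and by Lemma~\ref{le:Tpositive} we know $T(x')<\infty$ iff $F(x')>0$, in which case $(T(x'),F(x'))\in G$, i.e.\ $F(T(x'))=F(x')$, and moreover $F\ge F(x')$ on the whole interval $[x',T(x')]$ (this is what ``staying in $H$'' until $T(x')$ means). If $F(x')=0$ then $T(x')=\infty$ and the inequality $T(x')\ge T(x)$ is trivial, so assume $F(x')>0$. The hypothesis $x\le T(x')$ then places $x$ inside $[x',T(x')]$, hence $F(x)\ge F(x')$.

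Now I compute $T(x)=\inf\{y\ge x:\,(y,F(x))\notin H\}=\inf\{y\ge x:\,F(y)<F(x)\}$. The key observation is that on the interval $[x,T(x')]$ we have $F\ge F(x')$, and I must compare the first place after $x$ where $F$ drops below $F(x)$ with $T(x')$. If $F(x)=F(x')$, then since $F$ first drops below $F(x')$ exactly at $y=T(x')$ when scanning rightward from $x'$ — and in particular scanning from $x\ge x'$ — we get $T(x)=T(x')$ directly (one should check that $F$ does not dip below $F(x')$ strictly between $x'$ and $T(x')$, which again is the defining property of $T(x')$). If $F(x)>F(x')$, then the first $y\ge x$ with $F(y)<F(x)$ could occur before $T(x')$; but in that case $F(y)$ is still $\ge F(x')>$ nothing problematic, and more to the point $F(y)\in[F(x'),F(x))$ means $y$ could still be $\le T(x')$. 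Here I need: $T(x)\le T(x')$. Indeed, at $y=T(x')$ we have $F(T(x'))=F(x')<F(x)$, so $y=T(x')$ is one of the candidates in the infimum defining $T(x)$; hence $T(x)\le T(x')$. That is exactly the desired inequality (in this sub-case we even get the direction with slack).

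\textbf{Main obstacle.} The delicate point is the equality case $F(x)=F(x')$ and, more generally, making rigorous the statement ``$F\ge F(x')$ on $[x',T(x')]$ with strict inequality on $[x',T(x'))$'' from the infimum definition — one must rule out that $F$ touches or crosses the level $F(x')$ strictly inside the interval. Since $H$ is closed and $T(x')$ is defined as an infimum, for every $y\in[x',T(x'))$ we have $(y,F(x'))\in H$, i.e.\ $F(y)\ge F(x')$; that gives $F\ge F(x')$ on $[x',T(x'))$, and continuity of $F$ extends it to the closed interval with $F(T(x'))=F(x')$ when $T(x')<\infty$. With this in hand, when $F(x)=F(x')$: any $y\in[x,T(x'))$ has $F(y)\ge F(x')=F(x)$ so $y$ is not a candidate for $T(x)$, whence $T(x)\ge T(x')$; combined with the candidate argument $T(x)\le T(x')$ above, $T(x)=T(x')$. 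When $F(x)>F(x')$, only the candidate argument is needed and yields $T(x)\le T(x')$. In all cases $T(x')\ge T(x)$, and the constrained crossing property of $\graph(T)$ follows since an improvable pair would demand $x'<x\le T(x')<T(x)$.
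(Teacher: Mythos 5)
Your proof is correct, and it is close in spirit to the paper's argument, but the bookkeeping is dual: the paper fixes the level $F(x')$ and shows that the horizontal segment $[x',T(x)]\times\{F(x')\}$ stays in $H$ (via the rectangle $R=\{(a,b):x\le a\le T(x),\ b\le F(x)\}\subseteq H$), which bounds $T(x')$ from below by $T(x)$ in one stroke and needs no case distinction; you instead fix the level $F(x)$ and bound $T(x)$ from above by exhibiting a point at or just beyond $T(x')$ where $F$ drops below $F(x)$, using $F(T(x'))=F(x')$ from Lemma~\ref{le:Tpositive}. Both routes rest on the same two facts, which you establish cleanly from \eqref{eq:def:t}, closedness of $H$ and continuity of $F$: $F\ge F(x')$ on $[x',T(x')]$, hence $F(x)\ge F(x')$. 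One imprecision to fix: in your closing recap, the case $F(x)=F(x')$ cannot ``invoke the candidate argument above,'' since there $F(T(x'))=F(x)$ is not strictly below $F(x)$, so $T(x')$ itself is not a candidate in the infimum defining $T(x)$. The correct justification is the one you sketch in your first pass: the sets $\{y\ge x':F(y)<F(x')\}$ and $\{y\ge x:F(y)<F(x)\}$ coincide because the former has no points in $[x',x)$, so their infima agree; equivalently, for every $\eps>0$ there is $y\in[T(x'),T(x')+\eps)$ with $F(y)<F(x')=F(x)$, giving $T(x)\le T(x')+\eps$. With that reading all cases (including $F(x')=0$, where $T(x')=\infty$) are covered, and your deduction of the constrained crossing property from the inequality is exactly as in the paper.
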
 

\begin{proof}
  Let $x',x\in\R$ satisfy $x'<x\leq T(x')$. Note that $F(x)<F(x')$ would imply $(x,F(x'))\notin H$ and hence $T(x')<x$, a contradiction. Thus,  $F(x')\leq F(x)$. The semi-infinite rectangle
  $
    R=\{(a,b): x\leq a\leq T(x),\; b\leq F(x)\}
  $
  is contained in the hypograph $H$, and similarly for the rectangle $R'$ defined with $x'$ instead of $x$ (cf.\ Figure~\ref{fi:TconstrainedCrossing}).
\begin{figure}[thb]
\begin{center}
\resizebox{7cm}{!}{
\tikzset{every picture/.style={line width=0.75pt}} %
\begin{tikzpicture}[x=0.75pt,y=0.75pt,yscale=-1,xscale=1]
\draw  (53,167.8) -- (355.5,167.8)(83.25,31) -- (83.25,183) (348.5,162.8) -- (355.5,167.8) -- (348.5,172.8) (78.25,38) -- (83.25,31) -- (88.25,38)  ;
\draw   (316.82,122.01) .. controls (243.94,29.3) and (171.93,29.98) .. (100.81,124.04) ;
\draw  [dash pattern={on 4.5pt off 4.5pt}]  (151.5,73) -- (162.5,73) -- (267.5,73) ;
\draw    (125.5,96) -- (293.5,96) ;
\draw    (125.5,96) -- (125.5,177) ;
\draw    (293.5,96) -- (293.5,177) ;
\draw  [dash pattern={on 4.5pt off 4.5pt}]  (151.5,73) -- (151.5,177) ;
\draw  [dash pattern={on 4.5pt off 4.5pt}]  (267.5,73) -- (267.5,177) ;
\draw (153.5,183) node    {$x$};
\draw (127,181) node    {$x'$};
\draw (301.5,187) node    {$T( x')$};
\draw (264.5,187) node    {$T( x)$};
\draw (163.5,83) node    {$R$};
\draw (138.5,106) node    {$R'$};
\draw (325.5,123) node    {$F$};
\end{tikzpicture}
}
\end{center}
\vspace{-1em}\caption{On the proof of Lemma~\ref{le:TconstrainedCrossing}}
\label{fi:TconstrainedCrossing}
\end{figure}
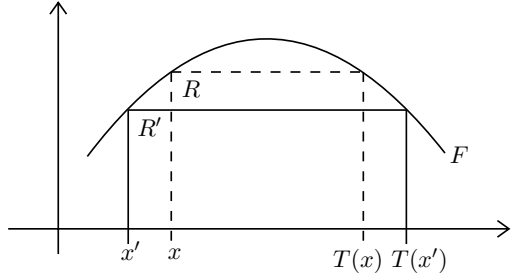
  To see that $T(x')\geq T(x)$, it suffices to check that the segment $[x',T(x)]\times \{F(x')\}$ is contained in $H$. In view of $T(x')\geq x$, the first $(x-x')$-long part of the segment has that property, and the rest of the segment is contained in $R$ and thus in $H$. 
\end{proof}

We now have all the ingredients for the main result on $T$.

\begin{proof}[Proof of Theorem~\ref{th:main-transport}]
  In view of Lemma~\ref{le:Tcouples} and $T(x)\geq x$, we have that $P:=\mu\otimes \delta_{T}\in\cD(\mu,\nu)$. Lemma~\ref{le:TconstrainedCrossing} shows that $P$ is supported on a set with the constrained crossing property %
  and then Theorem~\ref{th:main} yields $P=P_{*}$.
\end{proof}

\subsection{Reduction of Atoms}
\label{se:atoms}

Let $\mu\lst\nu$ satisfy $\mu\wedge\nu=0$ as before, but consider the case where $\mu$ and $\nu$ may have atoms. We still write $F=F_\mu-F_{\nu}$, now this function is right-continuous rather than continuous. The idea is to reduce to the atomless case by a transformation which inserts an interval at the location of each atom, with its length corresponding to the atom's mass. The atom is then replaced by a uniform density (cf.\ Figure~\ref{fi:jumpTransform}).

\begin{figure}[tbh]
\begin{center}
\resizebox{1.01 \textwidth}{!}{
\tikzset{every picture/.style={line width=0.75pt}} %
\begin{tikzpicture}[x=0.75pt,y=0.75pt,yscale=-1,xscale=1]
\draw  (15,215.4) -- (303.5,215.4)(43.85,93) -- (43.85,229) (296.5,210.4) -- (303.5,215.4) -- (296.5,220.4) (38.85,100) -- (43.85,93) -- (48.85,100)  ;
\draw    (66.5,204) .. controls (77,199.6) and (91.67,196.67) .. (97.67,178.67) ;
\draw    (133.67,118) .. controls (168,118) and (215.5,198) .. (276.5,203) ;
\draw  [dash pattern={on 0.84pt off 2.51pt}]  (97.83,178.67) -- (97.83,220.33) ;
\draw  [dash pattern={on 0.84pt off 2.51pt}]  (97.83,178.67) -- (220.08,178.67) ;
\draw    (98.17,147.33) .. controls (101.33,134.67) and (107,118) .. (133.67,118) ;
\draw  [dash pattern={on 0.84pt off 2.51pt}]  (98.17,147.33) -- (181.67,147.33) ;
\draw  (332,215.4) -- (620.5,215.4)(360.85,93) -- (360.85,229) (613.5,210.4) -- (620.5,215.4) -- (613.5,220.4) (355.85,100) -- (360.85,93) -- (365.85,100)  ;
\draw    (383.5,204) .. controls (394,199.6) and (408.67,196.67) .. (414.67,178.67) ;
\draw    (481.67,118) .. controls (516,118) and (563.5,198) .. (624.5,203) ;
\draw  [dash pattern={on 0.84pt off 2.51pt}]  (414.83,178.67) -- (414.83,220.33) ;
\draw  [dash pattern={on 0.84pt off 2.51pt}]  (414.83,178.67) -- (567.5,178.67) ;
\draw    (446.17,147.33) .. controls (449.33,134.67) and (455,118) .. (481.67,118) ;
\draw  [dash pattern={on 0.84pt off 2.51pt}]  (446.17,147.33) -- (529.67,147.33) ;
\draw [line width=1.5]    (414.83,178.67) -- (446.17,147.33) ;
\draw  [dash pattern={on 0.84pt off 2.51pt}]  (445.83,147.33) -- (445.83,220.33) ;
\draw  [dash pattern={on 0.84pt off 2.51pt}]  (530.83,147.33) -- (530.83,220.33) ;
\draw  [dash pattern={on 0.84pt off 2.51pt}]  (567.83,178.67) -- (567.83,220.33) ;
\draw   (276.5,136.25) -- (312.2,136.25) -- (312.2,127) -- (336,145.5) -- (312.2,164) -- (312.2,154.75) -- (276.5,154.75) -- cycle ;
\draw  [fill={rgb, 255:red, 0; green, 0; blue, 0 }  ,fill opacity=1 ] (415.35,178.67) .. controls (415.35,178.38) and (415.12,178.15) .. (414.83,178.15) .. controls (414.55,178.15) and (414.31,178.38) .. (414.31,178.67) .. controls (414.31,178.95) and (414.55,179.19) .. (414.83,179.19) .. controls (415.12,179.19) and (415.35,178.95) .. (415.35,178.67) -- cycle ;
\draw  [fill={rgb, 255:red, 0; green, 0; blue, 0 }  ,fill opacity=1 ] (446.69,147.33) .. controls (446.69,147.05) and (446.45,146.81) .. (446.17,146.81) .. controls (445.88,146.81) and (445.65,147.05) .. (445.65,147.33) .. controls (445.65,147.62) and (445.88,147.85) .. (446.17,147.85) .. controls (446.45,147.85) and (446.69,147.62) .. (446.69,147.33) -- cycle ;
\draw (99,230) node    {$x$};
\draw (313,215) node    {$x$};
\draw (47,80) node    {$F( x)$};
\draw (414,230) node    {$j( x-)$};
\draw (630,215) node    {$z$};
\draw (364,80) node    {$F'( z)$};
\draw (451,230) node    {$j( x)$};
\draw (525,230) node    {$T'( j( x))$};
\draw (588,230) node    {$T'( j( x-))$};
\end{tikzpicture}
}
\end{center}
\vspace{-1.5em}\caption{Transformation of an atom in $\mu$ at $x$.}
\label{fi:jumpTransform}
\end{figure}
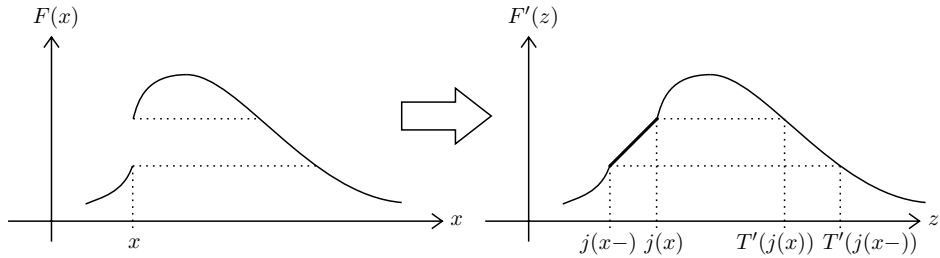

Let $\tau=\mu+\nu$ be the total variation and let
$$
  j(x)=x + \sum_{y\le x} |F(y) -F(y-)|,\quad x\in\R
$$
be the sum of the identity function and the cdf of the jump part of~$\tau$. Clearly~$j$ is strictly increasing and right-continuous; we denote its right-continuous inverse function by $j^{-1}: j(\R)\to\R$. Moreover, let
$$
  J_{x}=[j(x-),j(x)]
$$
be the interval representing the jump of $j$ at $x$. In particular, $J_{x}$ is an interval of length $\tau(\{x\})$ and a singleton $\{j(x)\}$ if $x$ is not an atom of $\mu$ or $\nu$.

Define an auxiliary measure $\mu'$ on $\R$ through its cdf as follows: for $z\in j(\R)$ we set $F_{\mu'}(z)=F_{\mu}(j^{-1}(z))$, whereas on the complement of $j(\R)$ we define $F_{\mu'}(z)$ by linearly interpolating from its values on $j(\R)$. In other words, $\mu'$ is defined by the two properties that $F_{\mu'} ( j(x) )= F_{\mu}(x)$ for $x\in \R$ and if $\tau$ has an atom at $x$, then $\mu'$ is uniform on the interval $J_{x}$ with total mass $\mu'(J_{x})=\mu(\{x\})$. It follows that $j$ is measure-preserving in the sense that $\mu'(j(B))=\mu(B)$ for any $B\in\cB(\R)$.
A second measure $\nu'$ is defined analogously from $\nu$. 

The construction implies that $\mu'\lst\nu'$  if and only if  $\mu\lst\nu$, and   $\mu'\wedge\nu'=0$ if and only if $\mu\wedge\nu=0$. Moreover, $\mu'$ and $\nu'$ are atomless. Thus, Theorem~\ref{th:main-transport} applies to $F'=F_{\mu'}-F_{\nu'}$ and yields a Monge map $T':=T(\mu',\nu')$. Reversing the transformation $j$, this map describes the desired coupling $P_{*}(\mu,\nu)$ as follows. (Of course, we can further apply Proposition~\ref{pr:decompose} to produce a statement analogous to Corollary~\ref{co:idAndT}, covering the case of arbitrary marginals $\mu\lst\nu$ without imposing the condition $\mu\wedge\nu=0$.)

\begin{theorem}\label{th:atomsTransform}
Let $\mu\wedge\nu=0$ and define $T'=T(\mu',\nu')$ as above. Then $P_{*}(\mu,\nu)=\mu\otimes \kappa$ for the stochastic kernel
$$
  \kappa(x) = \begin{cases}
   
  \frac{1}{\mu(\{x\})} \nu(\,\cdot \, \cap j^{-1} (T' (J_x) ))  & \mbox{if}~~\mu(\{x\})>0, \\
  \delta_{j^{-1}(T'(j(x)))} & \mbox{if}~~ \mu(\{x\})=0.
  \end{cases} 
$$
In particular, $\kappa$ is of Monge-type with transport map $T(x)=j^{-1}(T'(j(x)))$ whenever $\mu$ is atomless.
\end{theorem}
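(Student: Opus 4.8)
The plan is to realize $P_{*}(\mu,\nu)$ as the image of $P_{*}(\mu',\nu')$ under the map that applies $j^{-1}$ to both coordinates, and then to read off the kernel $\kappa$ from the disintegration. By Theorem~\ref{th:main-transport} the atomless, mutually singular marginals $\mu'\lst\nu'$ give $P_{*}(\mu',\nu')=\mu'\otimes\delta_{T'}$ with $T'=T(\mu',\nu')$. Extend $j^{-1}$ to all of $\R$ as the monotone left inverse $j^{-1}(w)=\inf\{x:\, j(x)\geq w\}$; it is nondecreasing, satisfies $j^{-1}\circ j=\id_{\R}$, and collapses each inserted interval $J_{x}$ to the point $x$. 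Put $\bar P:=(j^{-1}\times j^{-1})_{*}\,P_{*}(\mu',\nu')$. Since $j$ is measure-preserving in the sense that $(j^{-1})_{*}\mu'=\mu$ and $(j^{-1})_{*}\nu'=\nu$ (the mass $\mu',\nu'$ spread uniformly over $J_{x}$ is exactly $\mu(\{x\}),\nu(\{x\})$, which $j^{-1}$ gathers back to $x$), $\bar P$ is automatically a coupling of $\mu$ and $\nu$; and it is directional because $j^{-1}$ is nondecreasing and $T'(z)\geq z$ yields $j^{-1}(T'(z))\geq j^{-1}(z)$. Hence $\bar P\in\cD(\mu,\nu)$.

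Next I would show $\bar P$ is carried by a set with the constrained crossing property and invoke Theorem~\ref{th:main}\,(iv)$\Rightarrow$(v) to conclude $\bar P=P_{*}(\mu,\nu)$. By Lemma~\ref{le:TconstrainedCrossing} the graph of $T'$ has that property; restrict it to a $\mu'$-full Borel set $\Gamma'$ on which $T'$ takes values in $\supp\nu'$, let $\Gamma$ be its image under $j^{-1}\times j^{-1}$, and shrink $\Gamma$ so that its first coordinates lie in (the atoms of $\mu$) together with a Borel carrier of the diffuse part of $\mu$ chosen disjoint from the atoms of $\nu$; then $\Gamma$ still carries $\bar P$. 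Suppose $((x_{1},y_{1}),(x_{2},y_{2}))\in\Gamma^{2}$ were improvable, lifting to $(z_{i},T'(z_{i}))\in\Gamma'$. Monotonicity of $j^{-1}$ forces $z_{1}<z_{2}$ and $T'(z_{1})<T'(z_{2})$; if moreover $z_{2}\leq T'(z_{1})$, the lifted pair is improvable in $\graph(T')$, a contradiction. The only alternative is $x_{2}=y_{1}=:a$ with $z_{2},T'(z_{1})\in J_{a}$ and $z_{2}>T'(z_{1})$, so $a$ must be an atom of $\tau$ (a singleton $J_{a}$ would give $z_{2}=T'(z_{1})$, again improvable in $\graph(T')$). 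If $a$ is an atom of $\nu$ only then $\mu(\{a\})=0$, so $x_{2}=a$ is excluded from $\Gamma$; if $a$ is an atom of $\mu$ only, then $F'=F_{\mu'}-F_{\nu'}$ is continuous and has slope $+1$ on $J_{a}$, so $F'$ is strictly increasing there, $T'(z_{1})\notin\mathrm{int}(J_{a})$, and $T'(z_{1})=\sup J_{a}$ contradicts $z_{2}\in J_{a}$, $z_{2}>T'(z_{1})$. Thus $\Gamma$ has the constrained crossing property and $\bar P=P_{*}(\mu,\nu)$.

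It remains to disintegrate $\bar P$. Since $P_{*}(\mu',\nu')=\mu'\otimes\delta_{T'}$, the portion over a block $J_{x}$ with $\mu(\{x\})>0$ is $(\mu'|_{J_{x}})\otimes\delta_{T'}$, whose image under $j^{-1}\times j^{-1}$ is $\mu(\{x\})\,\delta_{x}$ in the first coordinate tensored with $(j^{-1}\circ T')_{*}(\mu'|_{J_{x}})$, while over the diffuse part $j^{-1}$ is injective and produces $\mu_{\mathrm{diff}}\otimes\delta_{j^{-1}(T'(j(\cdot)))}$. This already gives $\kappa(x)=\delta_{j^{-1}(T'(j(x)))}$ when $\mu(\{x\})=0$. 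When $\mu(\{x\})>0$ one identifies $(j^{-1}\circ T')_{*}(\mu'|_{J_{x}})$ with $\nu(\,\cdot\,\cap j^{-1}(T'(J_{x})))$: using $(j^{-1})_{*}\nu'=\nu$ this reduces to $(T')_{*}(\mu'|_{J_{x}})=\nu'|_{T'(J_{x})}$ up to $\nu'$-null sets, which follows from Lemma~\ref{le:Tcouples} (so $(T')_{*}\mu'=\nu'$) once one checks that $\{z\notin J_{x}:\,T'(z)\in T'(J_{x})\}$ is $\mu'$-null — because $F'$ is strictly monotone on $J_{x}$ and $\mu',\nu'$ are atomless, any such overlap forces $T'(z)$ to equal the $T'$-value of an endpoint of $J_{x}$, an event of zero $\mu'$-mass. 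Finally, when $\mu$ is atomless then for $\mu$-a.e.\ $x$ one has $\tau(\{x\})=0$, so $J_{x}=\{j(x)\}$ and $\kappa(x)=\delta_{j^{-1}(T'(j(x)))}$, i.e.\ $P_{*}$ is of Monge-type with transport map $T(x)=j^{-1}(T'(j(x)))$.

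The main obstacle is the bookkeeping around the inserted intervals: both the constrained crossing verification for $\Gamma$ and the identification of $\kappa$ hinge on controlling, modulo $\mu'$- and $\nu'$-null sets, how $T'$ interacts with the blocks $J_{x}$, and this control in turn rests on the continuity of $F'$ together with the strict monotonicity it inherits on each $J_{x}$ from the uniform densities placed there. Everything else is a routine transfer of properties along the measure-preserving map $j$.
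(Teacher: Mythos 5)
Your proposal is correct and takes essentially the same route as the paper: push the Monge coupling of Theorem~\ref{th:main-transport} through $j$, check that the transferred carrier retains the constrained crossing property of Lemma~\ref{le:TconstrainedCrossing}, and conclude by Theorem~\ref{th:main}\,(iv)$\Rightarrow$(v); your block-by-block case analysis and your verification that $\{z\notin J_x:\,T'(z)\in T'(J_x)\}$ is $\mu'$-null simply spell out what the paper compresses into its appeal to Corollary~\ref{co:invariance} and the normalization $\mu(\{x\})=\mu'(J_x)=\nu'(T'(J_x))$. One small inaccuracy: an overlap point $z$ to the right of $J_x$ need not share its image with an endpoint of $J_x$ (a point of strict decrease can have the same $T'$-value as an interior point), but such $z$ lie outside the strict-increase set and are $\mu'$-null by Lemma~\ref{le:Tpositive}, while the preimage of each endpoint value is $\mu'$-null because $(T')_{*}\mu'=\nu'$ is atomless, so your nullity claim and hence the identification of $\kappa$ stand.
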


\begin{proof}
  If $\mu(\{x\})>0$, then $\kappa(x)$ is well defined by Lemma~\ref{le:Tpositive} and has the proper normalization as
  $\mu(\{x\}) = \mu'(J_x) = \nu'(T'(J_x))$. Among the points $x$ with $\mu(\{x\})=0$, it suffices to consider those with $j(x)\in I'$, the set of points of strict increase of $F'$---indeed, as $j$ is measure-preserving, it follows from Lemma~\ref{le:Tpositive} that the complementary set is $\mu$-null. For $j(x)\in I'$, Lemma~\ref{le:Tpositive} shows that $\kappa(x)=\delta_{j^{-1}(T'(j(x)))}$ is well defined.
  As $T'$ defines a coupling in $\cD(\mu',\nu')$ and $j$ is strictly monotone and measure-preserving, it follows that $\kappa$ defines a coupling in $\cD(\mu,\nu)$. Moreover, we know that the graph $\Gamma'$ of $T'$ has the constrained crossing property (Lemma~\ref{le:TconstrainedCrossing}). The strictly monotone transform $j$ does not invalidate that property (Corollary~\ref{co:invariance}), hence  $\Gamma:=j^{-1}(\Gamma')$ has the same property, and $\Gamma$ carries $\mu\otimes \kappa$, as noted above. We conclude by Theorem~\ref{th:main}.   
\end{proof} 

We note that $P_{*}$ can still be of Monge-type when $\mu$ has atoms: by Theorem~\ref{th:atomsTransform}, that happens precisely if $j^{-1} (T' (J_x) )$ is a singleton whenever $\mu(\{x\})>0$. This requires very specific atoms in $\nu$, as $\kappa$ must transport each 
upward jump point of $F$ to a downward jump point, and moreover the downward jump must have at least the same size as the upward jump. One example of such a match-up is given in (a) below.

\begin{example}[Empirical Distributions]
Consider marginals $\mu=\frac{1}{n_{\mu}} \sum_{i=1}^{n_{\mu}} \delta_{x_{i}}$ and $\nu =\frac{1}{n_{\nu}} \sum_{i=1}^{n_{\nu}} \delta_{y_{i}}$ in stochastic order.

(a) If the $x_{i}$ are distinct and $n_{\mu}=n_{\nu}=:n$, then $P_{*}$ is Monge and the transport map $T$ is as constructed in the introduction: considering the destinations $S_{1}=\{y_{1},\dots,y_{n}\}$ as a multi-set (i.e., distinguishing the $y_{i}$ even if they have the same value), we iterate for $k=1,\dots,n$:
\begin{enumerate} 
\item $T(x_{k}):= \min \{y\in S_{k}:\, y\geq x_{k}\}$,
\item $S_{k+1}= S_{k}\setminus \{T(x_k)\}$.
\end{enumerate} 

(b) The case $n_{\mu}\neq n_{\nu}$ is natural when $\mu$ and $\nu$ are empirical distributions of observed data---in the study of treatment effects, data are  often not observed in pairs and hence the two marginals may not have the same number of observations; see Section~\ref{se:introduction}.  The above algorithm immediately extends to the case where $n_{\mu}=m n_{\nu}$ for an integer $m$, by redefining the $y_{i}$. If $n_{\mu}$ and $n_{\nu}$ are arbitrary, and/or the atoms have possibly different, rational weights, we can still write the marginals in the form $\mu=\frac{1}{n} \sum_{i=1}^{n} \delta_{x_{i}}$ and $\nu =\frac{1}{n} \sum_{i=1}^{n} \delta_{y_{i}}$ after by choosing a suitable $n$, now with the $x_{i}$ not necessarily distinct. The principle of the above algorithm to find $P_{*}$ still applies, but when several $x_{i}$ are at the same location, it will typically deliver a randomized coupling since an atoms of $\mu$ may be mapped into multiple atoms of $\nu$.
\end{example}

\section{Further Properties}\label{se:furtherProperties}
\subsection{Antitone Decomposition}\label{se:antitone}
 
As seen in Example~\ref{ex:single-crossing}, $P_{*}$ is the sum of an identity coupling and an antitone coupling when the marginal densities satisfy 
a single-crossing condition. In this section, we analyze to which extent such a decomposition generalizes to other marginals. The first result (together with Proposition~\ref{pr:decompose}) shows that $P_{*}$ is always the sum of an identity coupling and countably many antitone couplings. We will see that in certain cases, the marginal measures for those antitone coupling are simply restrictions of $\mu$ and $\nu$ to specific intervals, as in the aforementioned example. 
In general, however, the decomposition remains more implicit as the marginal measures do not admit such a simple description.

\begin{proposition}\label{pr:antitone}
Let $\mu \lst \nu$ satisfy $\mu\wedge\nu=0$. Then $P_*$ is the sum of countably many antitone couplings.
\end{proposition}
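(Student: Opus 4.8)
The plan is to realize $P_{*}$ on a support set with the constrained crossing property and then slice that set into countably many ``antitone'' pieces indexed by the rational numbers; almost everything is geometric and no finer structure is needed.

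The first step is to note that, since $\mu\wedge\nu=0$, the coupling $P_{*}$ gives no mass to the diagonal $\Delta=\{X=Y\}$. Indeed, the $X$\nobreakdash-marginal $\sigma$ of $P_{*}|_{\Delta}$ satisfies $\sigma(A)=P_{*}(\{x\in A,\,y=x\})\le\mu(A)$ and likewise $\sigma(A)\le\nu(A)$ for all Borel $A$, so $\sigma\le\mu\wedge\nu=0$ and $P_{*}(\Delta)=\sigma(\R)=0$. By Theorem~\ref{th:main} (equivalence of (iv) and (v)), $P_{*}$ is carried by a set $\Gamma\subseteq\H$ with the constrained crossing property, which may be taken Borel; discarding the $P_{*}$\nobreakdash-null set $\Gamma\cap\Delta$, we may assume $x<y$ for every $(x,y)\in\Gamma$.

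Next, for each $q\in\Q$ put $\Gamma_{q}=\{(x,y)\in\Gamma:\ x\le q\le y\}$. The crucial observation is that $\Gamma_{q}$ is \emph{antitone}, i.e.\ $(x,y),(x',y')\in\Gamma_{q}$ with $x<x'$ forces $y\ge y'$: from the definition of $\Gamma_{q}$ we get $y\ge q\ge x'$, so if $y<y'$ then $x<x'\le y<y'$, an improvable pair, contradicting the constrained crossing property. Consequently any coupling concentrated on a subset of $\Gamma_{q}$ is the antitone coupling of its two marginals. Moreover $\Gamma=\bigcup_{q\in\Q}\Gamma_{q}$, because each $(x,y)\in\Gamma$ has $x<y$, so the nondegenerate interval $[x,y]$ contains a rational $q$ and hence $(x,y)\in\Gamma_{q}$.

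Finally, enumerate $\Q=\{q_{1},q_{2},\dots\}$ and disjointify, $\widetilde\Gamma_{k}:=\Gamma_{q_{k}}\setminus\bigcup_{i<k}\Gamma_{q_{i}}$; these sets are Borel, pairwise disjoint, still antitone (being subsets of the $\Gamma_{q_{k}}$), and cover $\Gamma$. Setting $A_{k}:=P_{*}|_{\widetilde\Gamma_{k}}$ therefore gives $P_{*}=\sum_{k}A_{k}$ where each $A_{k}$ is the antitone coupling of its marginals $\sigma_{k}:=X_{\#}A_{k}$ and $\rho_{k}:=Y_{\#}A_{k}$, and $\sum_{k}\sigma_{k}=\mu$, $\sum_{k}\rho_{k}=\nu$, which is the claim. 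I do not expect a genuine obstacle; the only points deserving a line of care are the measurability bookkeeping (all sets in play are Borel) and the invocation of the standard fact that a coupling supported on a decreasing set is the antitone coupling of its marginals. (Together with Proposition~\ref{pr:decompose} this also yields the general statement $P_{*}=\id(\mu\wedge\nu)+\sum_{k}A_{k}$; and one could instead phrase the slicing in terms of the arcs $[x,T(x)]$ after the atom reduction of Section~\ref{se:atoms}, but the argument above needs neither.)
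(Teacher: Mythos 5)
Your proof is correct, but it takes a genuinely different route from the paper. You work purely geometrically: since $\mu\wedge\nu=0$ forces $P_{*}(\Delta)=0$, you take a Borel support set $\Gamma\subseteq\H$ with the constrained crossing property (this Borel choice is indeed available from the proof of (iii)$\Rightarrow$(iv) in Theorem~\ref{th:main}, using e.g.\ a bounded strictly submodular $g$), observe that for each rational $q$ the slice $\{(x,y)\in\Gamma:\ x\le q\le y\}$ is an antitone set (an improvable pair would otherwise appear, since $x<x'\le q\le y<y'$), cover $\Gamma$ by these slices because every off-diagonal pair straddles a rational, disjointify, and restrict $P_{*}$. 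All steps check out, including the diagonal-killing argument ($\sigma\le\mu$, $\sigma\le\nu$, hence $\sigma\le\mu\wedge\nu=0$) and the standard fact that a (sub)coupling carried by a decreasing set is the antitone coupling of its marginals. The paper instead argues constructively: it first reduces to atomless marginals via the transformation of Section~\ref{se:atoms}, then iteratively peels off the unimodal envelope $F_k'$ of $F_k$ (starting from $F=F_\mu-F_\nu$), obtaining explicit antitone couplings between the singular sub-marginals encoded by each $F_k'$, proves $F=\sum_k F_k'$ via a total-variation estimate, and verifies the constrained crossing property of the sum to identify it with $P_{*}$. Your argument is shorter and avoids both the atom reduction and the convergence step; what the paper's construction buys is additional structure---the antitone pieces have explicitly described marginals supported on disjoint finite unions of intervals---which is what the subsequent multiple-crossing example relies on when it says the decomposition can be read off ``by inspecting the proof.'' Your rational-slice pieces are arbitrary Borel fragments and would not directly deliver that refinement, but they fully prove the stated proposition.
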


\begin{proof}  
In view of Theorem~\ref{th:atomsTransform}, we may assume that $\mu,\nu$ are atomless.
For any continuous, nonnegative, nonconstant function $G$ of  finite variation  with $G(-\infty)=G(\infty)=0$, we define 
$x_G=\min(\argmax G)$ as the smallest global maximum point %
and set
$$
G' (x) = \min_{y\in [x,x_G]} G(y) \1_{\{x\le x_G\}} +\min_{y\in [x_G,x]} G(y) \1_{\{x> x_G\}}, 
$$
whereas if $G\equiv 0$, we use $x_{G}:=-\infty$ instead.
Note that $G'$ is continuous, increasing on $(-\infty,x_{G}]$ and decreasing on $[x_{G},\infty)$, with $0\le G'\le G$ and $\max G'=\max G$. Thus
$G'$ can be decomposed as 
$G'=F_{\mu'} -F_{\nu'}$ where the singular measures $\mu'$ and $\nu'$ can be coupled by a directional antitone coupling. %
This coupling, while equal to $P_{*}(\mu',\nu')$, will be denoted by $P(G)$ for brevity. Moreover, $\mu'\leq \mu$ and $\nu'\leq \nu$. Finally, the total variation $V(G')=(\mu'+\nu')(\R)$ satisfies $V(G')\geq 2\max G'=2\max G$.

Define $F_1:=F$ and
$$
F_{k+1} := F_{k} - F'_{k},\quad k\geq1.
$$
Using the above notation,
$P(F_k)$  is the directional antitone coupling between the singular measures $\mu_{k}',\nu_{k}'$ forming a decomposition for $F'_k$. 

To see that $F=\sum_{k} F_k'$, note that $V(F_{k}')\to0$ as $\sum_{k}  V(F_k') \le V(F)=2$. On the other hand, $V(F'_{k})\geq 2\max F_{k}$, so that $\max F_{k}\to0$; that is, $F_{k}$ uniformly decreases to zero and in particular $F=\sum_{k} F_k'$. This shows that $\sum_{k} P(F_k)$ is a coupling of $\mu$ and $\nu$. Clearly this coupling is directional, and thus equal to $P_{*}(\mu,\nu)$ by Theorem~\ref{th:main} if it satisfies the constrained crossing property.
To verify the latter, let $x$ be a point of strict increase of $F_{k}$ and suppose that the transport map $T_{k}$ of $P(F_{k})$ maps $x$ to $y$. Then $F_{k}(x)=F_{k}(y)$ and $F_{k}(z)\geq F_{k}(x)>0$ for all $z\in [x,y]$. It follows for any $j<n$ that $F'_{j}(z)< F_{j}(z)$ for all $z\in [x,y]$, which in turn implies that $F'_{j}$ is constant over the interval $[x,y]$. In other words, the couplings $P(F_{j})$ for $j<k$ cannot transport any mass into the interval or out of the interval. This shows the constrained crossing property, and in addition that the marginals $\mu'_{j}$ (resp.~$\nu'_{j}$) of $P(F_{j})$, $j\leq k$ are supported on disjoint sets which are finite unions of intervals.
\end{proof}

In particular cases, we can obtain the antitone couplings in $P_*$ explicitly as antitone couplings between disjoint intervals.

\begin{example}[Multiple-crossing Densities]
  Assume that $\mu$ and $\nu$ are atomless and that $F=F_{\mu}-F_{\nu}$ is piecewise monotone (with finitely many pieces). Then by inspecting the proof of Proposition~\ref{pr:antitone}, we see that
$P_{*} $ is the sum of the identical coupling of $\mu\wedge \nu$ and finitely many antitone couplings between pairs of \emph{disjoint} intervals.

As an important special case extending Example~\ref{ex:single-crossing}, suppose that $\mu$ and $\nu$ have continuous densities that cross finitely many times. Then $F=F_{\mu}-F_{\nu} = F_{\mu-\mu\wedge \nu} -F_{\nu-\mu\wedge \nu}$ is piecewise monotone and the optimal coupling between $\mu-\mu\wedge \nu$ and $\nu-\mu\wedge \nu$ is the sum of finitely many antitone couplings between disjoint intervals.  %
\end{example} 

In contrast to the above example, the following shows that a decomposition into antitone couplings between intervals is not possible in general.

\begin{example}[Absence of Antitone Intervals]
Let $\mu$ be the Cantor distribution  on $[0,1]$ %
and $\nu$ be uniform on $[0,2]$. Clearly $\mu\wedge \nu=0$. 
We first verify that  $\mu\lst \nu$, or equivalently $\cD(\mu,\nu)\neq\emptyset$.
Each element $x\in C$ can be represented  in base $3$
as
$x=2 \sum_{n=1}^\infty  { x_n}{3^{-n}}$
where $x_n\in\{0,1\}$.
The comonotone transport $T_C$
given by $T_C(x)=2 \sum_{n=1}^\infty x_{ n}{2^{-n}}$
is directional and
transports $\mu$ to $\nu$.
Hence, $\mu\lst \nu$.  

Next, we show that $P_{*}\in \cD(\mu,\nu)$ does not contain any
 antitone couplings between intervals.
Assume for contradiction that 
there exists an interval $[a,b]\subseteq [0,1]$
such that $\mu([a,b])>0$ and $T|_{[a,b]}$ is the antitone mapping between $\mu|_{[a,b]}$ and its image.
This implies that there exists $c$ such that  $\mu([a,c])>0$ and 
$T$ transports $\mu|_{[a,c]}$ to a distribution supported by $(c,\infty)$.
However, by Theorem \ref{th:existenceAndMinimality},
$T$ transports $\mu|_{(a,\infty)}$ to a distribution $\nu_a$ whose minimality property together with $\nu([a,c])>0$ imply that $\nu_{a}$ charges $[a,c]$, a contradiction.
\end{example}

\subsection{Optimality as Unconstrained Transport}\label{se:unconstrained}
 
The optimal directional coupling $P_{*}$ is also the optimizer for certain classical transport problems (unconstrained and with finite cost function) where the constraint is ``not binding,'' although only for specific marginals. We confine ourselves to giving one example.
Consider $\mu\lst\nu$ and the transport problem 
 \begin{equation}
\inf_{P} \int c(|y-x|) \, P(dx,dy)
\label{eq:unconstrainedProblemEx}
\end{equation}
over all couplings~$P$ of $\mu$ and $\nu$. Suppose that $c:\R\to\R_{+}$ is increasing and concave, so that $c(|y-x|)$ is supermodular on $\H$ but (typically) not on $\R^{2}$.%

\begin{proposition}\label{pr:unconstrained}
If $F=F_\mu-F_{\nu}$ is unimodal, then $P_*(\mu,\nu)$ is an optimal coupling for the unconstrained problem~\eqref{eq:unconstrainedProblemEx}. If $c$ is strictly concave, the optimizer is unique.
\end{proposition}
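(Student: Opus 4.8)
The plan is to reduce the unconstrained problem~\eqref{eq:unconstrainedProblemEx} to the constrained one by showing that, under the unimodality hypothesis, $P_{*}(\mu,\nu)$ is already optimal among \emph{all} couplings, not just directional ones. The key structural fact is Example~\ref{ex:single-crossing}: when $F=F_\mu-F_\nu$ is unimodal, $P_{*}$ decomposes as $\id(\mu\wedge\nu)+P_*(\mu',\nu')$ where $P_*(\mu',\nu')$ is the \emph{antitone} coupling between the mutually singular measures $\mu'$ and $\nu'$, which moreover are supported on intervals $[\alpha,\beta]$ and $[\beta,\gamma]$ meeting only at the point $\beta$. So on the support of $P_*$, every transported pair $(x,y)$ satisfies either $x=y$ (on the common part) or $x\le\beta\le y$ (on the singular part); in particular $|y-x|=(y-x)$ throughout.

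First I would record $c$-cyclical monotonicity as the optimality criterion: since $c\geq 0$, the cost $(x,y)\mapsto c(|y-x|)$ is bounded below, so by the standard theory (e.g.\ \cite{BeiglbockGoldsternMareschSchachermayer.09}, as already invoked in the proof of Theorem~\ref{th:main}) a coupling is optimal for~\eqref{eq:unconstrainedProblemEx} as soon as it is concentrated on a $c(|\cdot|)$-cyclically monotone set; and in fact it suffices to check monotonicity on pairs ($n=2$) when the marginals are in $\R$ and the cost is of this form, plus a routine approximation/closedness argument for longer cycles. Then I would take $\Gamma=\Delta_{\text{supp}(\mu\wedge\nu)}\cup\Gamma'$, where $\Gamma'$ is the graph of the antitone map between $\mu'$ and $\nu'$, as the support of $P_*$, and verify the two-point inequality
$$
c(|y-x|)+c(|y'-x'|)\;\le\;c(|y'-x|)+c(|y-x'|)\qquad\text{for }(x,y),(x',y')\in\Gamma.
$$
When both pairs lie on $\Delta$ this is trivial; when both lie on $\Gamma'$ it is exactly the statement that $c(|\cdot|)$ restricted to $\H$ is submodular in the sense we want to \emph{minimize}, i.e.\ that $(x,y)\mapsto -c(y-x)$ is submodular on $\H$, which follows because $c$ is increasing and concave on $\R_+$ (the composition of a concave increasing function with the linear map $(x,y)\mapsto y-x$ is supermodular on $\R^2$, hence $c(y-x)$ is supermodular on $\H$); and the antitone coupling is, by Theorem~\ref{th:main}\,(ii), optimal for all such submodular rewards. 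The only genuinely new case is a mixed pair, one point on $\Delta$ and one on $\Gamma'$: say $x=y\le\beta$ and $x'\le\beta\le y'$. Here one must use that $x'\ge x$ is impossible to arrange in general, so both orderings have to be examined, and the inequality becomes $c(0)+c(y'-x')\le c(|y'-x|)+c(|x-x'|)$; using $c(0)\ge 0$, monotonicity of $c$, and the fact that $x$ lies in the ``common'' interval while $x',y'$ straddle $\beta$ (so the geometry pins down the signs), this reduces to subadditivity-type estimates that follow from concavity of $c$ with $c(0)\ge0$.

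The main obstacle, and the step I would spend the most care on, is precisely this mixed case: one has to be honest about which absolute values collapse, and the cleanest route is probably not a brute-force case split but to observe that on the \emph{entire} support $\Gamma$ one has $x\le y$, so $c(|y-x|)=c(y-x)$ there, and then to check that $\Gamma$ is cyclically monotone for the function $\tilde c(x,y):=c(y-x)$ defined on all of $\R^2$ (not just $\H$); but $\tilde c$ is supermodular on $\R^2$ since $c$ is concave increasing, and a supermodular cost is \emph{maximized}, not minimized, by the comonotone coupling, so one must instead argue via the submodular reward $g(x,y)=-c(y-x)$ on $\H$ and note that any competitor coupling $Q$ pushes some mass strictly below the diagonal only by paying at least as much (here $c\ge0$ and $c(0)$ may be taken $=0$ after subtracting a constant, which does not change the optimizer). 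Concretely: for arbitrary $Q$, $\int c(|y-x|)\,dQ\ge \int c(|y-x|)\mathbf 1_{\{y\ge x\}}\,dQ\ge\int c(y-x)\mathbf 1_{\{y\ge x\}}\,dQ$, and one compares this with $\int c(y-x)\,dP_*$ using that $P_*$ is the minimizer of $\int -g\,dP$ over $\cD$ for the submodular $g(x,y)=-c(y-x)$ together with a truncation of $Q$ to $\H$ — but $Q|_{\H}$ need not be a coupling of $(\mu,\nu)$, so this comparison itself needs a gluing argument. I expect the tidy proof to go through the cyclical-monotonicity of $\Gamma$ for $\tilde c$ on $\R^2$ after all, checking the two-point inequality directly; the delicate point is the mixed pair, where unimodality (via the interval structure of $\mathrm{supp}\,\mu'$ and $\mathrm{supp}\,\nu'$ from Example~\ref{ex:single-crossing}) is exactly what is needed to control the signs. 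For the uniqueness claim under strict concavity, strict concavity makes the two-point inequality strict whenever the two pairs are genuinely ``uncrossed and crossable,'' so any optimizer must be concentrated on a cyclically monotone set with no improvable pairs and, on the diagonal-free part, on an antitone graph; by Theorem~\ref{th:main} such a coupling equals $P_*$, giving uniqueness.
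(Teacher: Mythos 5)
Your overall strategy (verify that the support of $P_*$ is cyclically monotone for the unconstrained cost and invoke sufficiency of cyclical monotonicity) is different from the paper's, which argues in the opposite direction: the paper takes an arbitrary optimizer of the unconstrained problem, uses two-point consequences of $c$-cyclical monotonicity of its support to exclude configurations with $y<x$ (the pairs listed in its proof), deduces from unimodality that the optimizer must be directional, and only then invokes Theorem~\ref{th:main}; alternatively it cites the general results of Gangbo--McCann. Your route could in principle work, but it has a genuine gap at its core: you only check (or sketch) the \emph{two-point} exchange inequality on the support of $P_*$ and dismiss longer cycles with ``a routine approximation/closedness argument.'' This is not routine and is false for general costs: $c$-cyclical monotonicity is an $n$-point condition for every $n$, and pairwise monotonicity implies it only for costs with global lattice structure (e.g.\ globally supermodular costs on $\R^2$). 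The cost $c(|y-x|)$ has no such structure---it is concave in $y-x$ away from the diagonal but has a convex kink on it, which is exactly the Gangbo--McCann phenomenon the paper alludes to---so reducing full cyclical monotonicity to the two-point inequality is precisely the substantive content of the proposition (essentially the concave-cost results of \cite{GangboMcCann.96} that the paper cites), not a technicality. Without that step, the sufficiency criterion you invoke does not apply, and your proposal itself acknowledges the difficulty by oscillating between this route and a direct comparison argument that you also leave incomplete (the ``gluing'' of $Q|_{\H}$).

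Two further points. First, under mere unimodality of $F$ (no densities are assumed in Proposition~\ref{pr:unconstrained}), the supports of $\mu'$ and $\nu'$ need not be intervals $[\alpha,\beta]$ and $[\beta,\gamma]$; what is true, and what your two-point computations actually need, is only that they are separated by a mode $x_0$ of $F$, so this is a fixable slip. Second, your uniqueness argument is circular as stated: you conclude that any optimizer has no improvable pairs and then appeal to Theorem~\ref{th:main}, but improvable pairs and that theorem's characterization of $P_*$ live inside $\H$, i.e.\ they apply only to \emph{directional} couplings. Showing that every optimizer of the unconstrained problem is (concentrated on) a directional set is exactly the step the paper's argument supplies via the excluded $y<x$ configurations together with unimodality, and it is missing from your proposal.
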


This follows from the general results stated in \cite[Part~II]{GangboMcCann.96}. A direct argument is sketched below.

\begin{proof}
  We know from Theorem~\ref{th:main} that $P_{*}$ is optimal among all directional couplings. To rule out that a non-directional coupling has a smaller cost, the key observation is that if $P$ is an optimizer, it is concentrated on a $c$-cyclically monotone set $\Gamma$, which implies that $\Gamma$ cannot contain pairs $(x,y),(x',y')$ with $y<x$ and either (i) $x'\in [y,x)$ and $y'\geq y$ or (ii) $y'\in[x,y)$ and $x'\leq x$. Together with the unimodality condition, this can be seen to imply the result. We omit the details in the interest of brevity.
\end{proof} 

The unimodality condition in Proposition~\ref{pr:unconstrained} is crucial; e.g., the assertion fails for $\mu=\frac12(\delta_{0}+\delta_{13})$ and $\nu=\frac12(\delta_{12}+\delta_{25})$ with cost function $\sqrt{|y-x|}$.

\subsection{Other Constraints}\label{se:moreConstraints}

The directional constraint $Y\geq X$ naturally generalizes to  
$Y \geq X + D$ for a measurable function $D: \R \to \R$ such that $x\mapsto x + D(x)$ is strictly increasing. For instance, if $D\equiv d$ is constant, this means that the transport must travel as least a distance $d$ to the right (or at most distance $|d|$ to the left, if $d<0$). While $Y\geq X$ is equivalent to $P(\H)=1$, the generalized constraint is expressed as $P(\D)=1$ for the epigraph~$\D$ of $x\mapsto x + D(x)$. We denote by $\cD_{D}(\mu,\nu)$ the set of all such couplings~$P$ of $\mu,\nu$.

The construction of $P_{*}$ naturally extends to this constraint. %
Indeed, let $Z(x)=x+D(x)$ and consider arbitrary distributions~$\mu$ and~$\nu$ on~$\R$. We define the transformed marginal $\mu'=\mu\circ Z^{-1}$ and define $\mu\preceq_{D}\nu$ to mean that $\mu'\lst\nu$. Then $\mu\preceq_{D}\nu$ if and only if $\cD_{D}(\mu,\nu)\neq\emptyset$, and more generally, the transformation $Z$ induces a bijection between $\cD_{D}(\mu,\nu)$ and the set $\cD(\mu',\nu)$ of directional couplings between~$\mu'$ and~$\nu$. If we define the analogues of the constrained crossing property, constrained submodularity, etc., for $\D$, this bijection preserves the crossing/optimality properties and we find that
$$
  P_{*}^{D}(\mu,\nu) := P_{*}(\mu',\nu)\circ (Z,\id)
$$
has the properties analogous to the optimal directional coupling for the constraint~$\D$. We omit the details in the interest of brevity.

\bibliography{stochfin}
\bibliographystyle{plain}

\end{document}